\newtheorem{lemma}{Lemma}[section]
\newtheorem{proposition}[lemma]{Proposition}
\newtheorem{corollary}[lemma]{Corollary}
\newtheorem{remark}[lemma]{Remark}
\newtheorem{remarks}[lemma]{Remarks}
\newtheorem{theorem}[lemma]{Theorem}
\newtheorem{example}[lemma]{Example}
\newtheorem{examples}[lemma]{Examples}
\newtheorem{conjecture}[lemma]{Conjecture}
\begin{document}

\title[Boolean inverse monoids]{On a class of countable Boolean inverse monoids and Matui's spatial realization theorem}

\author{Mark V. Lawson}

\address{Department of Mathematics
and the
Maxwell Institute for Mathematical Sciences,
Heriot-Watt University,
Riccarton,
Edinburgh~EH14~4AS,
United Kingdom}  
\email{markl@hw.ac.uk }
\thanks{The author was partially supported by an EPSRC grant  (EP/I033203/1). }

\begin{abstract} We introduce a class of inverse monoids that can be regarded as non-commutative generalizations of Boolean algebras.
These inverse monoids are related to a class of \'etale topological groupoids,
under a non-commutative generalization of classical Stone duality.
Furthermore, and significantly for this paper, they arise naturally in the theory of dynamical systems as developed by Matui.
We are thereby able to reinterpret a theorem of Matui on a class of \'etale groupoids, in the spirit of Rubin's theorem, 
as an equivalent theorem about a class of inverse monoids.
The inverse monoids in question may be viewed as the countably infinite generalizations of finite symmetric inverse monoids.
Their groups of units therefore generalize the finite symmetric groups and include amongst their number the Thompson groups $G_{n,1}$.
 \end{abstract}

\keywords{Inverse semigroups, \'etale topological groupoids, Stone duality}

\subjclass{20M18, 18B40, 06E15}

\maketitle

\section{Introduction}

The argument for studying inverse semigroups is that they algebraically encode information about partial symmetries 
generalizing the way that groups algebraically encode information about symmetries.
Thus the symmetric inverse monoids $I(X)$, the monoids of all partial bijections on the set $X$, generalize the symmetric groups
which in fact arise as their groups of units.
The Wagner-Preston representation theorem, the Cayley theorem for these structures, 
says that every inverse semigroup is isomorphic to an inverse semigroup of partial bijections.
Inverse semigroups were introduced as the correct algebraic setting for studying pseudogroups of transformations,
where a pseudogroup is simply an inverse semigroup of all partial homeomorphisms between the open subsets of a topological space.
The basic properties of inverse semigroups, and further historical and motivational issues, are discussed in \cite{Lawson98}.
Compelling evidence for the role of inverse semigroups as carriers of information about partial symmetries was provided by the the work of Kellendonk \cite{Kell1,Kell2}.
He showed that  the inverse semigroups of partial translational symmetries of aperiodic tilings were an important ingredient in calculating invariants
of those tilings that had physical interpretations.

Kellendonk's work was also a further example of the role played by inverse semigroup theory in studying $C^{\ast}$-algebras.
The connection between inverse semigroups and $C^{\ast}$-algebras originated in Renault's monograph \cite{Renault}
and has developed into an important area of application.
The papers \cite{Kumjian,Exel1,Exel2,Lenz} and the monograph \cite{Paterson} are just a representative sample.
$C^{\ast}$-algebras are suitable vessels for representing inverse semigroups but the representations that arise are actually mediated by a third class of structures: topological groupoids.

The fact that {\em pseudogroups} and topological groupoids are related has become virtually folklore.
The groupoid of germs of a pseudogroup is a standard construction and it is this  groupoid that has usually been the structure studied rather than the pseudogroup itself.
On the other hand, from a topological groupoid one may construct a pseudogroup by taking local sections, usually with extra structure of some kind.
What is curious is that the nature of this connection was not explored further until comparatively recently.
The catalyst was Paterson's monograph \cite{Paterson}.
There were initially two different approaches.
The first started with the theory of quantales and localic groupoids \cite{Res1,Res2} and could be viewed as a non-commutative frame theory \cite{PJ}.
The second started as a non-commutative version of Stone duality \cite{LMS,Lawson10a,Lawson10b,Lawson12,LL,LL2} developing some important insights by Lenz \cite{Lenz}
who in turn was attempting to understand Kellendonk's basic constructions from an algebraic point of view.
Recent work has shown that these two approaches are complementary \cite{KL}.
As a consequence, we now understand the nature of the connections between inverse semigroups, and therefore pseudogroups, and topological groupoids.
In this paper, I will need just one case of this general theory of non-commutative Stone duality.

Every inverse semigroup comes equipped with an algebraically defined natural partial order.
This is the abstract version of the restriction order on partial bijections.
Although this order has always played an important part in the theory, comparatively little was done to study
inverse semigroups with what might be called lattice-like properties.
A notable exception was Leech's work \cite{Leech} on inverse semigroups in which every pair of elements has a meet,
the so-called {\em inverse $\wedge$-semigroups}.
His work hinted at a connection between such inverse semigroups and groups since 
the $\wedge$-completion of a group yields an inverse semigroup of cosets.
However, non-commutative Stone duality requires further strong order-theoretic properties
which we now describe.

The {\em compatibility relation}, denoted by $\sim$, is defined by
$a \sim b$ if, and only if, $ab^{-1}$ and $a^{-1}b$ are both idempotents.
If $a,b \leq c$ then $a \sim b$.
Thus being compatible is a necessary condition for a pair of elements to have a join.
An inverse semigroup is said to be {\em distributive} if it has all binary joins of compatible pairs of elements and
multiplication distributes over any binary joins that exist.
The set of idempotents $E(S)$ of an inverse semigroup $S$ forms an idempotent commutative subsemigroup
and so with respect to the natural partial order a meet-semilattice, usually referred to as the {\em semilattice of idempotents}.
An inverse monoid is said to be a {\em Boolean inverse monoid} if it
is a distributive inverse monoid whose semilattice of idempotents is a (unital) Boolean algebra.
We shall be interested in {\em Boolean inverse $\wedge$-monoids},
prime examples of which are the symmetric inverse monoids.
Under non-commutative Stone duality, such inverse monoids correspond
to Hausdorff \'etale topological groupoids whose space of identities is a Stone space,
that is, a compact Hausdorff space with a basis of clopen subsets.

We now come to the connection that motivates this whole paper.
Groupoids of this complection arise naturally in mathematics quite independently of inverse semigroup theory.
Most recently, and importantly for this paper, in work by Matui \cite{Matui12,Matui13} who is motivated by questions coming from topological dynamics.
Non-commutative Stone duality tells us that his results should be interpreted as results about a class of 
countable Boolean inverse $\wedge$-monoids.
These, then, will be the subject of this paper.

To state the main theorem to be proved, we need  two definitions.
Let $S$ be a Boolean inverse monoid.
An ideal $I$ of $S$ is said to be {\em $\vee$-closed} if
$a \vee b \in I$ whenever $a,b \in I$ and $a$ and $b$ are compatible.
A $\vee$-closed ideal will be called a {\em $\vee$-ideal}.
If the only $\vee$-ideals of $S$ are $\{0\}$ and $S$ we say that $S$ is {\em $0$-simplifying}.
We denote by $Z(E(S))$ the centralizer of the idempotents in $S$.
Clearly, $E(S) \subseteq Z(E(S))$.
If $E(S) = Z(E(S))$ then $S$ is said to be {\em fundamental}.
Symmetric inverse monoids are $0$-simplifying and fundamental and, in the finite case,
the only Boolean inverse $\wedge$-monoids satisfying these two conditions.

\begin{remark}{\em It is a theorem of Wagner \cite[Theorem~5.2.10]{Lawson98} that fundamental inverse semigroups are precisely those isomorphic
to inverse semigroups of partial homeomorphisms of a $T_{0}$-space where the domains of definition form a basis for the topology.
See \cite{Lawson98}.}
\end{remark}

We can now state our main theorem  which is the inverse monoid version of \cite[Theorem 3.10]{Matui13}.

\begin{theorem}[Spatial Realization Theorem]\label{them:one} 
Let $S$ and $T$ be two countably infinite Boolean inverse $\wedge$-monoids
which are $0$-simplifying and fundamental.
Then the following are equivalent.
\begin{enumerate}

\item $S$ and $T$ are isomorphic.

\item The groups of units of $S$ and $T$ are isomorphic.

\end{enumerate}
\end{theorem}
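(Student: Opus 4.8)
The implication $(1) \Rightarrow (2)$ is immediate, since any isomorphism of inverse monoids carries units to units and hence restricts to an isomorphism of the respective groups of units. All the content lies in $(2) \Rightarrow (1)$, and the plan is to obtain it by transporting both sides of the statement across non-commutative Stone duality and then invoking Matui's theorem.

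First I would assemble the dictionary. Under the duality recalled in the introduction, $S$ and $T$ correspond to Hausdorff \'etale groupoids $\mathcal{G}$ and $\mathcal{H}$ with Stone unit spaces, the elements of each monoid being realized as the compact-open bisections and the identity $1$ as the whole unit space. I would then match each hypothesis with one of Matui's. A $\vee$-ideal corresponds to an open invariant subset of the unit space, so $0$-simplifying translates into minimality; the condition $E(S) = Z(E(S))$ translates into effectiveness (essential principality), the triviality of the interior of the isotropy bundle. For the unit space I would argue as follows: since $S$ is $0$-simplifying and fundamental but not finite, $E(S)$ cannot be finite (a finite such monoid being a finite symmetric inverse monoid, as already remarked), so the unit space is infinite; countability of $S$ makes $E(S)$ a countable Boolean algebra and hence the unit space second countable and metrizable; and an isolated point would, by minimality and compactness, force the unit space to be finite. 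Therefore the unit space is a compact, metrizable, totally disconnected, perfect space, that is, a Cantor space.

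With the dictionary in hand, the decisive point is the identification of the group of units of $S$ with the topological full group $[[\mathcal{G}]]$. A unit is precisely an element $a$ with $aa^{-1} = a^{-1}a = 1$, and under the duality this says exactly that the corresponding compact-open bisection has full source and full range, which is the defining property of a member of $[[\mathcal{G}]]$. Thus the hypothesis that the groups of units of $S$ and $T$ are isomorphic becomes the hypothesis that $[[\mathcal{G}]]$ and $[[\mathcal{H}]]$ are isomorphic as abstract groups. Matui's Theorem~3.10, applied to the effective, minimal \'etale groupoids $\mathcal{G}$ and $\mathcal{H}$ on the Cantor set, then produces an isomorphism $\mathcal{G} \cong \mathcal{H}$ of \'etale groupoids, and transporting this back across the duality yields $S \cong T$.

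The main obstacle is not any isolated deep step but the faithful verification of the dictionary: that the duality really does cut out exactly our class of countably infinite, $0$-simplifying, fundamental Boolean inverse $\wedge$-monoids against exactly Matui's class of effective, minimal \'etale groupoids on the Cantor set, and --- most delicately --- that the purely algebraic notion of the group of units is identified with the topologically defined topological full group. Once these correspondences are secured the theorem becomes a genuine reinterpretation of Matui's, and nothing further is required.
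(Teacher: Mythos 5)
Your argument is correct, but it is precisely the \emph{indirect} proof that the paper explicitly mentions and then deliberately sets aside: after establishing the dictionary in Theorem~\ref{them:qi} (that $0$-simplifying, fundamental Tarski monoids correspond under non-commutative Stone duality to minimal, essentially principal, second countable Hausdorff \'etale groupoids with Cantor unit space, with the group of units going to the topological full group), the author remarks that this combined with Matui's Theorem~3.10 already yields Theorem~\ref{them:one}, and that the point of the section is to give a \emph{direct} proof instead. Your verification of the dictionary matches the paper's: $\vee$-ideals correspond to open invariant sets (Proposition~\ref{prop:sweep}), fundamentality to essential principality (Proposition~\ref{prop:one}), and the Cantor unit space comes from the atomless/Tarski-algebra argument (Proposition~\ref{prop:blop} and Corollary~\ref{cor:judy} --- the algebraic counterpart of your ``no isolated points by minimality'' observation). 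What the paper does differently is to re-prove Matui's content internally: it isolates axioms (F1)--(F3) as inverse-monoid translations of Matui's Definition~3.1, verifies them for $0$-simplifying fundamental Tarski monoids (Proposition~\ref{prop:hope}), characterizes the local subgroups $U(\sigma(t))$ purely group-theoretically as the double centralizers $W_t$ (Proposition~\ref{prop:matui_five}), and then reconstructs first the structure space from the group of units (Proposition~\ref{them:matui_main}) and finally the whole monoid, using piecewise factorizability (Proposition~\ref{prop:dory}) to see that the units together with the idempotents generate enough of the groupoid of ultrafilters (Proposition~\ref{prop:aber}). Your route buys brevity at the cost of outsourcing all the reconstruction work to Matui; the paper's route buys a self-contained argument entirely in the language of inverse monoids, which is its stated purpose. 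The one step you should make explicit if you pursue your route is the passage from Matui's spatial realization (a homeomorphism of unit spaces intertwining the full-group actions) back to an isomorphism of the \emph{whole} groupoids, and hence of the monoids: this is where the fact that every ultrafilter contains a unit is genuinely used, and it is the content of the paper's Propositions~\ref{prop:dory} and~\ref{prop:aber}.
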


This result is seemingly paradoxical from the point of view of the theory of partial symmetries
since  the global symmetries determine the partial symmetries, which is decidedly puzzling.
Examples of inverse monoids satisfying the conditions of the above theorem
are the {\em Cuntz inverse monoids $C_{n}$} described in \cite{Lawson07a,Lawson07b}.
The groups of units of such monoids are the Thompson groups $G_{n,1}$.
This shows the highly non-trivial nature of the theorem.
The Cuntz inverse monoids are congruence-free and so the following corollary is non-trivial.

\begin{corollary}\label{cor:congruence-free}
Let $S$ and $T$ be two countably infinite, congruence-free Boolean inverse $\wedge$-monoids.
Then the following are equivalent.
\begin{enumerate}

\item $S$ and $T$ are isomorphic.

\item The groups of units of $S$ and $T$ are isomorphic.

\end{enumerate}
\end{corollary}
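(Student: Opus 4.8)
The plan is to obtain the corollary as a direct specialisation of Theorem~\ref{them:one}: I would show that every countably infinite \emph{congruence-free} Boolean inverse $\wedge$-monoid is automatically $0$-simplifying and fundamental, and then quote the theorem. The implication $(1)\Rightarrow(2)$ is immediate, since any isomorphism carries units to units; hence the real work is only to license the application of the theorem, after which both implications follow simultaneously. I would run two independent reductions, each turning on the principle that the relevant property is equivalent to the vanishing of one distinguished congruence. Throughout I use that the semilattice of idempotents $E(S)$, being a Boolean algebra, has distinct bottom and top $0\neq 1$, so that $E(S)$ contains at least two idempotents.

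For \emph{fundamentality}, I would recall the standard fact that $S$ is fundamental exactly when its maximum idempotent-separating congruence $\mu$ is the equality congruence $\Delta$ (see \cite{Lawson98}). Since $\mu$ is a congruence and $S$ is congruence-free, $\mu$ equals either $\Delta$ or the universal congruence $\nabla=S\times S$. But $\nabla$ relates the distinct idempotents $0$ and $1$, so it is not idempotent-separating and cannot be $\mu$. Hence $\mu=\Delta$ and $S$ is fundamental.

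For \emph{$0$-simplicity}, I would associate to each $\vee$-ideal $I$ its Rees congruence $\rho_I$, defined by $a\mathrel{\rho_I}b$ if and only if $a=b$ or $a,b\in I$; this is a genuine congruence because ideals of inverse semigroups are closed under inversion. Congruence-freeness forces $\rho_I\in\{\Delta,\nabla\}$. If $\rho_I=\Delta$ then no two distinct elements are identified, so $I=\{0\}$; if $\rho_I=\nabla$ then every element is collapsed into $I$, so $I=S$. Thus the only $\vee$-ideals are $\{0\}$ and $S$, and $S$ is $0$-simplifying. Running both arguments for $T$ as well and then invoking Theorem~\ref{them:one} completes the proof.

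The reductions are short, and I expect the one point genuinely needing care to be the exclusion of $\nabla$ in each step, which is exactly where the hypotheses enter: for fundamentality this rests solely on $0\neq 1$, and for $0$-simplicity on the observation that $\rho_I=\nabla$ can arise only when $I$ exhausts $S$. The subtler issue to confirm is that the operative notion of congruence is the full one, comprising all inverse-monoid congruences, so that both $\mu$ and every Rees congruence $\rho_I$ fall under the congruence-free hypothesis; were congruence-freeness to be read only in a join-preserving sense, the fundamentality step would additionally demand that $\mu$ be join-compatible, and that compatibility would be the sole remaining thing to check.
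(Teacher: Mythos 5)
Your proposal is correct, and it follows the same overall route the paper intends: the corollary is a specialisation of Theorem~\ref{them:one}, obtained by checking that congruence-freeness forces the two hypotheses. The paper itself leaves the verification implicit, relying on the cited standard theorem that an inverse semigroup with zero is congruence-free if and only if it is fundamental, $0$-simple and has $0$-disjunctive semilattice of idempotents (together with the earlier remark that $0$-simple Boolean inverse monoids are trivially $0$-simplifying). You instead prove the two needed facts from first principles: fundamentality because the idempotent-separating congruence $\mu$ cannot be $\nabla$ once $0\neq 1$, and $0$-simplicity (hence $0$-simplifying) via the Rees congruence of an ideal. Both arguments are sound — $\mu$ is indeed a semigroup congruence, every $\vee$-ideal contains $0$ and is a two-sided ideal, and the paper's notion of congruence-free is the full semigroup one, so your closing caveat about join-compatibility does not arise. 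What your version buys is self-containedness; what the paper's citation buys is brevity. Either way the corollary follows.
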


\section{Elementary background}

Let $S$ be an inverse semigroup.
If $A \subseteq S$ then $E(A) = A \cap E(S)$.
If $s \in S$ then $s^{-1}s$ is called the {\em domain idempotent} and $ss^{-1}$ is called the {\em range idempotent}.
We shall sometimes write $\mathbf{d}(s) = s^{-1}s$ and $\mathbf{r}(s) = ss^{-1}$.
We write $s^{-1}s \, \mathscr{D} \, ss^{-1}$ and sometimes use the notation
$s^{-1}s \stackrel{s}{\rightarrow} ss^{-1}$.
If $s,t \in S$ are such that $\mathbf{d}(s) = \mathbf{r}(t)$ then we say that $st$ is a {\em restricted product}.
Observe that in this case $\mathbf{d}(st) = \mathbf{d}(t)$ and $\mathbf{r}(st) = \mathbf{r}(s)$.
Our inverse semigroups will always be assumed to have a zero and, in addition, this paper deals with monoids.
A {\em unit} is simply an invertible element.
A unit $g$ is {\em non-trivial} if $g \neq 1$.
The group of units of $S$ is denoted by $U(S)$.
If $g,h \in U(S)$, define $[g,h] = ghg^{-1}h^{-1}$, the {\em commutator} of $g$ and $h$.
An {\em involution} is a unit $g$ such that $g^{2} = 1$.
A non-zero element $s \in S$ is called an {\em infinitesimal} if $s^{2} = 0$.
Infinitesimals will play an important role in constructing units.
One of the themes of this paper will be the relationship between an inverse monoid and its group of units.
If $e$ is any idempotent in $S$ then $eSe$ is an inverse subsemigroup that is a monoid with respect to $e$ called a {\em local submonoid}.
An element of an inverse semigroup is said to be an {\em atom} if it is non-zero and the only element strictly below it is zero.
In this paper, all Boolean algebras will be {\em unital}.
If $e$ is an element of a Boolean algebra then $\bar{e}$ denotes its complement.
A Boolean algebra is called {\em atomless} if it has no atoms.
It is a famous theorem of Tarski that any two countable atomless Boolean algebras are isomorphic \cite{GH}.
Accordingly, we call the unique countable atomless Boolean algebra the {\em Tarski algebra}.
The Stone space of the Tarski algebra is the {\em Cantor space}.
If  $ab^{-1}$ and $a^{-1}b$ are both zero we say that $a$ and $b$ are {\em orthogonal}.
In this case, we often write $a \perp b$.
If the join of an orthogonal pair of elements exists, we shall say that it is an {\em orthogonal join}.

We now recall the standard definition of the {\em maximum idempotent-separating congruence} $\mu$ \cite{Lawson98}.
Let $S$ be an arbitrary inverse semigroup.
Define $s  \, \mu \, t$ if and only if $ses^{-1} = tet^{-1}$ for all idempotents $e \in E(S)$.
Thus $s$ and $t$ induce the same conjugation maps on $E(S)$.
Observe that if $s \, \mu \, t$ then $s^{-1}s = t^{-1}t$ and $ss^{-1} = tt^{-1}$.
In fact, if $s^{-1}s = t^{-1}t$ and $ss^{-1} = tt^{-1}$ then to check that  $s  \, \mu \, t$ it is enough to verify  $ses^{-1} = tet^{-1}$ for all idempotents $e \leq s^{-1}s$.
An inverse semigroup is fundamental if, and only if, $\mu$ is equality \cite[Proposition~5.2.5]{Lawson98}.
For each sermilattice $E$ we may define an inverse semigroup $T_{E}$ called the {\em Munn semigroup} of $E$.
This consists of all order-isomorphisms between the principal order ideals of $E$.
Its semilattice of idempotents is isomorphic to $E$.
Given an inverse semigroup $S$ there is a homomorphism to $T_{E(S)}$ that contains all the idempotents of $T_{E(S)}$.
This homomorphism is injective if, and only if, the semigroup $S$ is fundamental.
A non-trivial inverse semigroup with zero is said to be {\em $0$-simple} if the only ideals are the zero ideal and the whole inverse semigroup.
The following is an alternative characterization which is more useful.
The proof can be found in \cite{Lawson98}.

\begin{lemma}\label{lem:helen}
Let $S$ be an inverse semigroup with zero.
It is $0$-simple if, and only if, for any two non-zero idempotents $e$ and $f$
there exists an idempotent $i$ such that $e \, \mathscr{D} \, i \leq f$. 
\end{lemma}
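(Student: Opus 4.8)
The plan is to prove the two implications separately, in each case translating $0$-simplicity into a statement about the principal ideal $SfS$ generated by an idempotent and then converting membership in such an ideal into a $\mathscr{D}$-relation. For the easy direction, I would assume the $\mathscr{D}$-condition and let $I$ be a non-zero ideal, aiming to show $I = S$. First pick a non-zero $x \in I$ and note that $\mathbf{d}(x) = x^{-1}x$ is a non-zero idempotent lying in $I$, since $I$ is an ideal. Given an arbitrary non-zero idempotent $f$, apply the hypothesis to the pair $(f, \mathbf{d}(x))$ to get an idempotent $i$ with $f \, \mathscr{D} \, i \leq \mathbf{d}(x)$. Because $i \leq \mathbf{d}(x) \in I$ we have $i \in I$, and because $f \, \mathscr{D} \, i$ there is an $s$ with $\mathbf{d}(s) = i$ and $\mathbf{r}(s) = f$, so that $f = s i s^{-1} \in I$. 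Hence every non-zero idempotent lies in $I$, and since any non-zero $y$ satisfies $y = y\,\mathbf{d}(y)$ with $\mathbf{d}(y) \in I$, we conclude $I = S$.

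For the converse, assume $S$ is $0$-simple and let $e, f$ be non-zero idempotents. The set $SfS$ is a non-zero ideal, so $SfS = S$, and in particular $e = afb$ for some $a, b \in S$. The key step is to extract from this factorization an idempotent $i \leq f$ that is $\mathscr{D}$-related to the \emph{whole} of $e$, rather than merely to some idempotent below $e$. My plan is to set $j = f(bb^{-1})f$, which is an idempotent with $j \leq f$, and to observe that the identity $\mathbf{r}(afb) = e$ rearranges to $e = afbb^{-1}fa^{-1} = aja^{-1}$. Taking $s = aj$ then yields $\mathbf{r}(s) = aja^{-1} = e$ and, using that idempotents commute, $\mathbf{d}(s) = (a^{-1}a)j$, an idempotent below $j$ and hence below $f$. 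Setting $i = \mathbf{d}(s)$ gives exactly $e = \mathbf{r}(s) \, \mathscr{D} \, \mathbf{d}(s) = i \leq f$, as required.

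The step I expect to be the main obstacle is precisely the one just flagged: obtaining a copy of the full idempotent $e$ below $f$. A naive choice such as $s = fbe$ produces an idempotent $\mathscr{D}$-related to $e$ but sitting strictly below $e$, which is too weak to prove the lemma as stated. That the stronger conclusion is genuinely needed — and genuinely uses $0$-simplicity through the membership $e \in SfS$ — can be seen in $I(X)$ for infinite $X$, where no full copy of a large idempotent need fit below a small one and the monoid is correspondingly not $0$-simple. The remaining verifications, namely that $j$ is idempotent with $j \leq f$, that $s=aj$ has the stated domain and range idempotents, and the ideal-closure computations in the easy direction, are routine manipulations with the natural partial order and the commutativity of $E(S)$.
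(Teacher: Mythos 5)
Your argument is correct: both directions check out, and the key computation in the converse ($e=\mathbf{r}(afb)=aja^{-1}$ with $j=f(bb^{-1})f$, then $s=aj$ giving $\mathbf{r}(s)=e$ and $\mathbf{d}(s)=ja^{-1}a\leq f$) is sound. The paper itself omits the proof and defers to \cite{Lawson98}; your argument is essentially the standard one found there (the textbook version factors $e=(eaf)(fbe)$ and takes $i=(eaf)^{-1}(eaf)$, which is the mirror image of your computation), so there is nothing substantive to compare.
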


A meet semilattice $E$ with zero is said to be {\em $0$-disjunctive} if for all non-zero $e \in E$ and $0 \neq f < e$ there exists
$0 \neq f' \leq e$ such that $f \wedge f' = 0$.
Observe that Boolean algebras are automatically $0$-disjunctive.
An inverse semigroup is said to be {\em congruence-free} if it has exactly two congruences.
It is a standard theorem that an inverse semigroup with zero is congruence-free if and only if
it is fundamental, $0$-simple and its semilattice of idempotents is $0$-disjunctive \cite{Petrich}.

It is clear that 0-simple Boolean inverse monoids are 0-simplifying since they have no non-trivial ideals at all.
However, the converse is not true.
The following was first discussed as Example~4.14 of \cite{Lawson12}.

\begin{example} {\em Let $I(X)$ be the finite symmetric inverse monoid on the set $X = \{1,2,\ldots, n\}$.
We shall usually deote it by $I_{n}$.
This is a Boolean inverse $\wedge$-monoid.
For $n \geq 2$, this monoid is not 0-simple in that there are non-trivial ideals.
We prove that it is, nevertheless, 0-simplifying.
Let $I \subseteq I(X)$ be a non-zero $\vee$-closed ideal.
Let $f \in I$ be any non-zero element.
Then $f^{-1}f \in I$ since $I$ is an ideal.
From the description of idempotents in $I(X)$, the idempotent $f^{-1}f = 1_{A}$
for some non-empty subset $A \subseteq X$.
Let $x_{i} \in A$ for some $i$.
Then $1_{x_{i}} \in I$ since $I$ is an ideal.
Let $x_{j} \neq x_{i}$.
Let $g$ be the partial bijection that maps $x_{i}$ to $x_{j}$.
Then $1_{x_{j}} = g^{-1} 1_{x_{i}}g$.
It follows that $1_{x_{j}} \in I$.
Hence $I$ contains all idempotents defined on one-element subsets.
Since $I$ is closed under joins, $I$ contains all idempotents of $I(X)$.
Let $g$ be the partial bijection that sends $x$ to $y$ where $x,y \in X$.
Then $g = g1_{x}$.
But again, because $I$ is an ideal, we have that $g \in I$.
Thus $I$ contains all the elements of $I(X)$ with domain (and range) containing only one element.
But every element of $I(X)$ is a finite disjoint union of such elements.
It follows that $I = I(X)$, as claimed.}
\end{example}

We shall need some notation from the theory of posets.
Let $P$ be a poset.
If $X \subseteq P$, 
define
$$X^{\uparrow} = \{ y \in P \colon \exists x \in X \mbox{ such that } x \leq y \}$$
and
$$X^{\downarrow} = \{ y \in P \colon \exists x \in X \mbox{ such that } y \leq x \}.$$
If $X = X^{\uparrow}$ we say that $X$ is {\em closed upwards}.

We shall be working with inverse monoids that have binary meets and binary compatible joins.
The following is proved as Lemma~1.4.11 of \cite{Lawson98}.
It shows circumstances in which meets exist in an arbitrary inverse semigroup.

\begin{lemma}\label{le: meets} If $s \sim t$ then $s \wedge t$ exists and 
$$\mathbf{d}(s \wedge t) = \mathbf{d}(s)\mathbf{d}(t)
\mbox{ and }
\mathbf{r}(s \wedge t) = \mathbf{r}(s)\mathbf{r}(t).$$
\end{lemma}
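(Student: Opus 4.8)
The plan is to exhibit an explicit candidate for the meet, verify it is the greatest lower bound, and then read off its domain and range idempotents. First I would unpack the hypothesis: $s \sim t$ means that $st^{-1}$ and $s^{-1}t$ are idempotents, and since idempotents are self-inverse this yields the two identities $st^{-1} = ts^{-1}$ and $s^{-1}t = t^{-1}s$. These are where compatibility does all of its work, and I expect them to be the key algebraic input.

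The candidate is $z = st^{-1}t$. I would first check that it is a lower bound of both $s$ and $t$. Writing $z = s(t^{-1}t) = s\,\mathbf{d}(t)$ shows $z \leq s$ at once, since right-multiplying by an idempotent produces an element below in the natural partial order. For $z \leq t$ I would invoke compatibility: using $st^{-1} = ts^{-1}$ we get $z = st^{-1}t = (ts^{-1})t = t(s^{-1}t)$, and as $s^{-1}t$ is an idempotent this exhibits $z$ as $t$ times an idempotent, whence $z \leq t$.

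Next I would establish the universal property. Let $u$ be any common lower bound, so $u \leq s$ and $u \leq t$; then $u = s\,\mathbf{d}(u)$ and $\mathbf{d}(u) \leq \mathbf{d}(t)$. Computing $z\,\mathbf{d}(u) = st^{-1}t\,\mathbf{d}(u) = s\,\mathbf{d}(u) = u$, where the middle equality uses that $\mathbf{d}(u) \leq \mathbf{d}(t) = t^{-1}t$ so that the idempotent $t^{-1}t$ is absorbed. Hence $u = z\,\mathbf{d}(u) \leq z$, which proves $z = s \wedge t$. For the domain idempotent, $\mathbf{d}(z) = z^{-1}z = (t^{-1}ts^{-1})(st^{-1}t)$, and since idempotents commute this collapses to $s^{-1}s\,t^{-1}t = \mathbf{d}(s)\mathbf{d}(t)$. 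For the range, rather than grind out $zz^{-1}$ directly I would use duality: inversion is an order-isomorphism, so $(s \wedge t)^{-1} = s^{-1} \wedge t^{-1}$, and $s \sim t$ gives $s^{-1} \sim t^{-1}$; applying the domain formula just proved to the pair $s^{-1}, t^{-1}$ then gives $\mathbf{r}(s \wedge t) = \mathbf{d}(s^{-1} \wedge t^{-1}) = \mathbf{d}(s^{-1})\mathbf{d}(t^{-1}) = \mathbf{r}(s)\mathbf{r}(t)$.

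The main obstacle, or rather the only step that genuinely requires the hypothesis, is the inequality $z \leq t$: without compatibility the element $z$ need not be below $t$ at all, so $z$ would fail to be even a common lower bound, let alone the greatest one. The universal-property step and the domain computation are purely formal and use no compatibility. Thus the crux is to identify the correct candidate and to recognise that the two identities $st^{-1} = ts^{-1}$ and $s^{-1}t = t^{-1}s$ are precisely what promotes $z$ from being below $s$ to being below $t$ as well.
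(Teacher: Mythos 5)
Your proof is correct and follows essentially the standard argument: the paper does not reprove this lemma but cites Lemma~1.4.11 of Lawson's book, where the meet of a compatible pair is exhibited as exactly the element $st^{-1}t$ and verified in the same way. Your duality trick for the range formula is a clean way to avoid repeating the computation.
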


The above lemma will often be used in the following situation:
if $s$ and $t$ are bounded above by an element then they are compatible.
It follows by the above lemma that $s \wedge t$ exists.

\begin{lemma}\label{le:rik} Let $S$ be a distributive inverse semigroup.
If $a \vee b$ exists then
$$\mathbf{d}(a \vee b) = \mathbf{d}(a) \vee \mathbf{d}(b)
\mbox{ and }
\mathbf{r}(a \vee b) = \mathbf{r}(a) \vee \mathbf{r}(b).$$
\end{lemma}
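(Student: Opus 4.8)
The plan is to compute $\mathbf{d}(a\vee b)=(a\vee b)^{-1}(a\vee b)$ directly, expand it by distributivity, and then show that the two ``cross terms'' that appear are absorbed by the joinands $\mathbf{d}(a)$ and $\mathbf{d}(b)$.

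First I would record that inversion respects compatible joins, that is, $(a\vee b)^{-1}=a^{-1}\vee b^{-1}$. This holds because inversion is an order-automorphism of $S$ (we have $s\leq t$ if and only if $s^{-1}\leq t^{-1}$) which also preserves the compatibility relation (since $a\sim b$ if and only if $a^{-1}\sim b^{-1}$, the defining conditions being the same pair of idempotents). An order-automorphism that preserves compatibility carries the least upper bound of the compatible pair $\{a,b\}$ to the least upper bound of $\{a^{-1},b^{-1}\}$, giving the claimed identity.

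Next, using the assumed distributivity of multiplication over the existing join, I would expand
\[
\mathbf{d}(a\vee b)=(a^{-1}\vee b^{-1})(a\vee b)=a^{-1}a\vee a^{-1}b\vee b^{-1}a\vee b^{-1}b.
\]
Since $a\vee b$ exists, the pair $a,b$ is compatible, so $a^{-1}b$ and $b^{-1}a$ are idempotents. The key computation is then that these cross terms lie below the domain idempotents: for the idempotents $a^{-1}b$ and $a^{-1}a$ one has $(a^{-1}a)(a^{-1}b)=a^{-1}(aa^{-1})b=a^{-1}b$, whence $a^{-1}b\leq a^{-1}a=\mathbf{d}(a)$; symmetrically $(b^{-1}b)(b^{-1}a)=b^{-1}(bb^{-1})a=b^{-1}a$, so $b^{-1}a\leq b^{-1}b=\mathbf{d}(b)$. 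Both cross terms are therefore absorbed, and the four-fold join collapses to $a^{-1}a\vee b^{-1}b=\mathbf{d}(a)\vee\mathbf{d}(b)$, which is the first formula.

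Finally, the range formula is obtained by duality: applying the domain formula to the pair $a^{-1},b^{-1}$ and using $\mathbf{r}(s)=\mathbf{d}(s^{-1})$ together with the first step gives
\[
\mathbf{r}(a\vee b)=\mathbf{d}((a\vee b)^{-1})=\mathbf{d}(a^{-1}\vee b^{-1})=\mathbf{d}(a^{-1})\vee\mathbf{d}(b^{-1})=\mathbf{r}(a)\vee\mathbf{r}(b).
\]
I expect the main subtlety to be the first step, namely verifying that inversion preserves the \emph{join} and not merely the order, and relatedly checking that every join invoked (in particular $a^{-1}\vee b^{-1}$ and the expanded four-term join) genuinely exists. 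The latter is guaranteed because compatibility is inherited by all the relevant subpairs once $a\sim b$, so by Lemma~\ref{le: meets} and distributivity the requisite joins are available.
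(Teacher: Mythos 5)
Your proof is correct. The paper actually states Lemma~\ref{le:rik} without proof (it is treated as standard, being the finitary version of a result of Resende), and your argument --- establishing $(a\vee b)^{-1}=a^{-1}\vee b^{-1}$ from the fact that inversion is a compatibility-preserving order-automorphism, expanding $\mathbf{d}(a\vee b)$ by distributivity, and absorbing the idempotent cross terms $a^{-1}b\leq\mathbf{d}(a)$ and $b^{-1}a\leq\mathbf{d}(b)$ --- is a complete and standard way to supply the missing details.
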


The following is just the finitary version of \cite{Res3}.

\begin{lemma}\label{lem:meets_and_joins}
Let $S$ be a distributive inverse monoid.
Suppose that $a \vee b$ and $c \wedge (a \vee b)$ both exist.
Then $c \wedge a$ and $c \wedge b$ both exist, 
the join $(c \wedge a) \vee (c \wedge b)$ exists and
$$c \wedge (a \vee b)
=
(c \wedge a) \vee (c \wedge b).$$
\end{lemma}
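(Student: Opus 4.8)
The plan is to reduce everything to the single element $m = c \wedge (a \vee b)$, which exists by hypothesis, and then to exploit the fact that $m$ lies below $a \vee b$ so that the awkward meets $c \wedge a$ and $c \wedge b$ can be replaced by meets for which compatibility is automatic.

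First I would show that $c \wedge a$ exists and in fact equals $m \wedge a$. The point is that we are not given $c \sim a$, so Lemma~\ref{le: meets} does not apply to $c$ and $a$ directly; however $m \leq a \vee b$ and $a \leq a \vee b$, so $m$ and $a$ share the upper bound $a \vee b$, whence $m \sim a$ and $m \wedge a$ exists by Lemma~\ref{le: meets}. A short greatest-lower-bound argument then identifies $m \wedge a$ with $c \wedge a$: clearly $m \wedge a \leq m \leq c$ and $m \wedge a \leq a$, while any common lower bound $x$ of $c$ and $a$ satisfies $x \leq a \leq a \vee b$, hence $x \leq c \wedge (a \vee b) = m$ and therefore $x \leq m \wedge a$. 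The same argument gives $c \wedge b = m \wedge b$.

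Next, since $c \wedge a = m \wedge a$ and $c \wedge b = m \wedge b$ both lie below $m$, they are compatible, so by distributivity their join $(c \wedge a) \vee (c \wedge b)$ exists. It remains to prove that this join equals $m$. Here I would use the multiplicative description of the natural partial order: from $m \leq a \vee b$ we get $m = \mathbf{r}(m)(a \vee b)$, and distributivity of multiplication over the join yields $m = \mathbf{r}(m)a \vee \mathbf{r}(m)b$. Finally I would check that $\mathbf{r}(m)a = m \wedge a$ (and symmetrically for $b$): the inequality $\mathbf{r}(m)a \leq a$ is immediate, $\mathbf{r}(m)a \leq \mathbf{r}(m)(a \vee b) = m$ follows from $a \leq a \vee b$, and conversely $\mathbf{r}(m \wedge a) \leq \mathbf{r}(m)$ gives $m \wedge a = \mathbf{r}(m)(m \wedge a) \leq \mathbf{r}(m)a$. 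Substituting these identities into $m = \mathbf{r}(m)a \vee \mathbf{r}(m)b$ produces the desired equation $c \wedge (a \vee b) = (c \wedge a) \vee (c \wedge b)$.

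I expect the main obstacle to be the very first reduction, namely establishing that $c \wedge a$ exists at all: compatibility of $c$ and $a$ is not available, so one cannot invoke Lemma~\ref{le: meets} for them directly. The device of passing to $m \wedge a$, where the common upper bound $a \vee b$ forces compatibility, is what makes the rest routine, after which everything is an exercise in distributivity and the idempotent calculus of the natural partial order.
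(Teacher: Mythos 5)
Your proof is correct, and it is organized rather differently from the one in the paper, so a comparison is worth recording. Both arguments pivot on the element $m = c \wedge (a \vee b)$, but they diverge in how they handle the two delicate points. For the existence of $c \wedge a$, the paper first proves two auxiliary identities (that $(x \wedge y)\mathbf{d}(y) = x\mathbf{d}(y) \wedge y$ and that $(x \vee y)\mathbf{d}(x) = x$), then exhibits the explicit candidate $m\,\mathbf{d}(a)$ and verifies directly that it is the greatest lower bound of $c$ and $a$; you instead observe that $m$ and $a$ share the upper bound $a \vee b$, so $m \sim a$ and $m \wedge a$ exists by Lemma~\ref{le: meets}, and then identify $m \wedge a$ with $c \wedge a$ by a short greatest-lower-bound argument. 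The two candidates $m\,\mathbf{d}(a)$ and $\mathbf{r}(m)a$ are of course the same element, but your route gets existence essentially for free from the compatibility clause rather than by hand. For the final identity, the paper runs a two-sided inclusion, decomposing an arbitrary lower bound $x$ of $c$ and $a \vee b$ as $x\mathbf{d}(a) \vee x\mathbf{d}(b)$ and chasing inequalities; you instead write $m = \mathbf{r}(m)(a \vee b)$, distribute the multiplication over the join, and substitute the identifications $\mathbf{r}(m)a = m \wedge a = c \wedge a$ and $\mathbf{r}(m)b = c \wedge b$, which collapses the whole verification into one line. What your approach buys is economy: no auxiliary lemmas and a single application of distributivity of multiplication over joins; what the paper's buys is that its auxiliary identities are stated in a reusable form. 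Every step of your argument is justified by facts available in the paper (the characterization $s \leq t \Leftrightarrow s = ss^{-1}t$ of the natural partial order, the fact that elements with a common upper bound are compatible, and the definition of distributivity), so there is no gap.
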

\begin{proof} We begin with two auxiliary results.

Suppose that $x \wedge y$ exists.
We prove that $x  \mathbf{d} (y) \wedge y$ exists and that $(x \wedge y)\mathbf{d} (y) = x\mathbf{d} (y) \wedge y$.
It is immediate that $(x \wedge y)\mathbf{d} (y) \leq x \mathbf{d}(y), y$.
Suppose now that $u \leq x\mathbf{d} (y), y$.
Then $u \leq x,y$ and so $u \leq x \wedge y$.
Thus $u\mathbf{d} (y) \leq (x \wedge y)\mathbf{d} (y)$.
But $u \leq y$ implies that $u = \mathbf{r} (u)y$ and so $u \mathbf{d} (y) = u$.
Hence $u \leq  (x \wedge y)\mathbf{d} (y)$.
It follows that  $(x \wedge y)\mathbf{d} (y) = x\mathbf{d} (y) \wedge y$, as claimed.

Suppose that $x \vee y$ exists.
We prove that $(x \vee y)\mathbf{d} (x) = x$.
Clearly, $x \leq (x \vee y)\mathbf{d} (x)$.
But $\mathbf{d} ((x \vee y)\mathbf{d} (x)) = \mathbf{d} (x \vee y)\mathbf{d} (x) = \mathbf{d} (x)$
since $\mathbf{d} (x) \leq \mathbf{d} (x \vee y)$.
But two elements bounded above having the same domain are equal.

We show that $c \wedge a$ exists.
Let $x \leq a,c$.
Then $x \leq a \vee b, c$ and so $x \leq c \wedge (a \vee b)$.
Thus $x \mathbf{d} (a) \leq (c \wedge (a \vee b))\mathbf{d} (a)$.
But $x\mathbf{d} (a) = x$.
It follows that $x \leq  (c \wedge (a \vee b))\mathbf{d} (a)$.
But $ (c \wedge (a \vee b))\mathbf{d} (a) \leq c,a$.
Thus $c \wedge a = (c \wedge (a \vee b))\mathbf{d} (a)$.
By symmetry, $c \wedge b$ exists.

Now $c \wedge a \leq a$ and $c \wedge b \leq b$ and so
$c \wedge a, c \wedge b \leq a \vee b$.
It follows that $c \wedge a \sim c \wedge b$ and so $(c \wedge a) \vee (c \wedge b)$ exists.

Observe that $(c \wedge a) \vee (c \wedge b) \leq a \vee b, c$.
Now let $x \leq c, a \vee b$.
From $x \leq a \vee b$ we get that $x = (a \vee b)\mathbf{d} (x)$.
From $x \leq a \vee b$ we get that $x \mathbf{d} (a) \leq (a \vee b)\mathbf{d} (a) = a$.
Thus $x\mathbf{d} (a) \leq c\mathbf{d} (a)$.
Hence $x \mathbf{d} (a) \leq c\mathbf{d} (a) \wedge a$.
Similarly, $x \mathbf{d} (b) \leq c\mathbf{d} (b) \wedge b$.
It follows that $x\mathbf{d} (a) \vee x\mathbf{d} (b) \leq (c\mathbf{d} (a) \wedge a) \vee (c \mathbf{d} (b) \wedge b)$.
We therefore get that $x \leq (c \wedge a)\mathbf{d} (a) \vee (c \wedge b)\mathbf{d} (b) \leq (c \wedge a) \vee (c \wedge b)$,
as required.
\end{proof}

Homomorphisms between Boolean inverse monoids will be unital, map zero to zero and preserve any binary compatible joins.
If the $\wedge$-operation is preserved we will call it a {\em morphism}.
The {\em kernel} of such a morphism is the preimage of zero.
The proof of the following is straightforward.

\begin{lemma} Let $\theta \colon S \rightarrow T$ be a morphism of Boolean inverse $\wedge$-monoids.
Then the kernel of $\theta$ is a $\vee$-ideal.
\end{lemma}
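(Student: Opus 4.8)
The plan is to unpack the definition of a $\vee$-ideal into its two constituent requirements and verify each one directly from the structure-preserving properties of $\theta$. Write $K = \ker \theta = \theta^{-1}(0)$. Since $\theta$ maps zero to zero we have $0 \in K$, so $K$ is nonempty; it remains to check that $K$ is a two-sided ideal and that it is closed under compatible binary joins.

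For the ideal property, I would take $a \in K$ and an arbitrary $s \in S$. Because $\theta$ is a homomorphism of inverse monoids it is multiplicative, so $\theta(sa) = \theta(s)\theta(a) = \theta(s)\cdot 0$ and $\theta(as) = \theta(a)\theta(s) = 0 \cdot \theta(s)$. In any inverse semigroup with zero the zero is a multiplicative annihilator, so both products vanish in $T$; hence $sa, as \in K$. This gives $SK \subseteq K$ and $KS \subseteq K$, so $K$ is an ideal of $S$.

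For the $\vee$-closure, I would take $a, b \in K$ with $a \sim b$. Since $S$ is a Boolean inverse monoid it is in particular distributive, so the compatible join $a \vee b$ exists. The morphism $\theta$ preserves binary compatible joins by definition, and therefore $\theta(a \vee b) = \theta(a) \vee \theta(b) = 0 \vee 0 = 0$, the last equality because $0$ is the least element of $T$ and hence its own join with itself. Thus $a \vee b \in K$, establishing that $K$ is $\vee$-closed.

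There is no genuine obstacle here: each step is a one-line consequence of the defining properties. The only point worth flagging is that the argument uses only the homomorphism part of the hypothesis — multiplicativity, preservation of $0$, and preservation of compatible joins — and never invokes preservation of the $\wedge$-operation. So the statement in fact holds for any homomorphism of Boolean inverse monoids, and the stronger hypothesis that $\theta$ be a morphism is not needed for this lemma.
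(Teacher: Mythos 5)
Your proof is correct and is exactly the straightforward verification the paper has in mind (the paper omits the proof, calling it straightforward): the ideal property follows from multiplicativity and the fact that zero annihilates, and $\vee$-closure follows because homomorphisms are required to preserve binary compatible joins. Your observation that preservation of $\wedge$ is never used is also accurate, since the paper's definition of homomorphism already includes preservation of compatible joins.
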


\begin{remark} {\em Ganna Kudryavtseva observed that the kernel of a morphism determines the morphism.
See the comments before Theorem~4.18 of \cite{Lawson12}.
It follows that the notion of $0$-simplifying is a genuine notion of simplicity.}
\end{remark}

Finite Boolean inverse $\wedge$-monoids are all known. 
See \cite{Lawson12} for a proof of the following.
The statement of part (1) below will be clarified later.
This theorem will play an important role in motivating our work.

\begin{theorem}\label{them:bongo} Let $S$ be a finite Boolean inverse $\wedge$-monoid.
\begin{enumerate}

\item There is a finite discrete groupoid $G$ such that $S$ is isomorphic to the set of all local bisections of $G$.

\item The fundamental such semigroups are the finite direct products of symmetric inverse m onoids.

\item The $0$-simplifying, fundamental such semigroups are the finite symmetric inverse monoids.

\end{enumerate}
\end{theorem}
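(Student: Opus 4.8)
The plan is to derive all three parts from one structural fact: a finite Boolean inverse $\wedge$-monoid is \emph{atomistic}, and its atoms assemble into a finite groupoid whose local bisections reconstruct $S$. First I would analyse the atoms. Since $E(S)$ is a finite Boolean algebra it is isomorphic to the power set of its set $X$ of atoms, so every idempotent is an orthogonal join of atoms of $E(S)$. I would then show that $s \in S$ is an atom of $S$ precisely when $\mathbf{d}(s)$ and $\mathbf{r}(s)$ are atoms of $E(S)$: if $0 \neq e < \mathbf{d}(s)$ then $0 \neq se < s$, forcing $\mathbf{d}(s)$ to be an atom, while conversely an element with atomic domain has nothing strictly below it, since $t \leq s$ with $\mathbf{d}(t) = \mathbf{d}(s)$ gives $t = s\mathbf{d}(t) = s$. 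Writing $\mathbf{d}(s) = e_1 \vee \cdots \vee e_k$ as a join of atoms and applying distributivity, $s = se_1 \vee \cdots \vee se_k$ exhibits every element as an orthogonal join of atoms (the ranges $se_is^{-1}$ are again atoms, as conjugation by $s$ is an order isomorphism of idempotents). This atomistic property is the engine of the whole argument.

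For part (1) I would set the object set of $G$ to be $X$ and take the arrows to be the atoms of $S$, viewing an atom $s$ as an arrow $\mathbf{d}(s) \to \mathbf{r}(s)$, with composition the restricted product and inversion $s \mapsto s^{-1}$. A short computation, $\mathbf{r}(st) = s\mathbf{d}(s)s^{-1} = \mathbf{r}(s) \neq 0$, shows that the restricted product of composable atoms is again an atom, so $G$ is a finite discrete groupoid. The assignment $s \mapsto \{\,t : t \text{ an atom},\ t \leq s\,\}$ carries $S$ into the local bisections of $G$: distinct atoms below $s$ have distinct domains and ranges (since $t \leq s$ is determined by $\mathbf{d}(t)$), so the image really is a local bisection. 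Atomisticity makes this map injective, the distributive law makes it multiplicative and join-preserving, and $t \leq s \wedge s'$ iff $t \leq s$ and $t \leq s'$ makes it meet-preserving; surjectivity holds because an arbitrary local bisection is an orthogonal family of atoms whose join realises it. This gives the isomorphism of part (1).

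For part (2) I would decompose $G$ into its connected components $G = \bigsqcup_k G_k$, which splits the local bisection monoid as a direct product over the components. The decisive lemma is that $S$ is fundamental exactly when every isotropy (vertex) group of $G$ is trivial. Here I would invoke the Munn representation $S \to T_{E(S)}$: for the finite Boolean algebra $E(S)$ an order isomorphism between principal ideals is the same as a bijection of their atoms, so $T_{E(S)} \cong I(X)$ and the Munn map is precisely the action of a local bisection on objects $\mathbf{d} \to \mathbf{r}$. This map is injective iff no two distinct atoms share the same domain and range, i.e.\ iff all isotropy groups are trivial; in that case each $G_k$ is the pair groupoid on its $n_k$ objects and its local bisections form $I_{n_k}$, so $S \cong \prod_k I_{n_k}$, while conversely any product of symmetric inverse monoids is fundamental. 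For part (3) I would observe that a product $\prod_k I_{n_k}$ with at least two factors is never $0$-simplifying, since $I_{n_1} \times \{0\} \times \cdots \times \{0\}$ is a proper non-zero $\vee$-closed ideal; hence $0$-simplifying collapses the product to a single factor $I_n$, and the earlier Example already records that each $I_n$ is $0$-simplifying and fundamental.

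The main obstacle I anticipate is not the atom analysis but the fundamentality dichotomy in part (2): identifying $T_{E(S)} \cong I(X)$ and reading the Munn representation as the object-level action, so that fundamentality translates cleanly into the vanishing of all isotropy groups. Once that is in hand, everything else is bookkeeping on the groupoid $G$.
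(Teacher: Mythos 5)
Your argument is correct, and I can find no gap: the atomistic analysis, the identification of atoms of $S$ with arrows of a finite groupoid, the reading of the Munn representation into $T_{E(S)}\cong I(X)$ as the object-level action (so that fundamentality is exactly triviality of isotropy), and the collapse of the product under the $0$-simplifying hypothesis are all sound. Note that the paper itself offers no proof of this theorem, referring instead to \cite{Lawson12}; what you have written is precisely the finite, discrete specialization of the non-commutative Stone duality set out in Section~3 (where ultrafilters become principal filters generated by atoms), so your route is the expected one rather than a genuinely different one.
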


The following relation was introduced in \cite{Lenz} and will be important in handling $0$-simplifying monoids.
Let $e$ and $f$ be two non-zero idempotents in $S$.
Define $e \preceq f$ if and only if there exists a set of  elements $X = \{x_{1}, \ldots, x_{m}\}$ such that
$e = \bigvee_{i=1}^{m}  \mathbf{d}(x_{i})$
and 
$\mathbf{r}(x_{i}) \leq f$ for $1 \leq i \leq m$. 
We can write this formally as $e = \bigvee \mathbf{d}(X)$ and $\bigvee \mathbf{r}(X) \leq f$.
We say that $X$ is a {\em pencil} from $e$ to $f$.
Define the relation $e \equiv f$ if and only if $e \preceq f$ and $f \preceq e$.
The following was proved as part of Lemma~7.8 of \cite{Lenz}.

\begin{lemma}\label{lem:toby} Let $S$ be a Boolean inverse $\wedge$-monoid.
The relation $\equiv$ is the universal relation on the set of non-zero idempotents if and only if $S$ is $0$-simplifying.
\end{lemma}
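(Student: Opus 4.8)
The plan is to set up a dictionary between the relation $\preceq$ on nonzero idempotents and the $\vee$-ideals of $S$. The bridging observation is that a pencil $X = \{x_{1}, \ldots, x_{m}\}$ from $e$ to $f$ exhibits $e$ as a compatible join $e = \bigvee_{i} \mathbf{d}(x_{i})$ of idempotents, each of which already lies in the two-sided ideal $SfS$: the condition $\mathbf{r}(x_{i}) \leq f$ forces $f x_{i} = x_{i}$, and hence $\mathbf{d}(x_{i}) = x_{i}^{-1} x_{i} = x_{i}^{-1} f x_{i} \in SfS$. Thus ``$e \preceq f$'' should be read as ``$e$ belongs to the $\vee$-ideal generated by $f$'', and I would prove each implication of the lemma by making this slogan precise. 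Throughout I would use the basic facts that $\preceq$ is reflexive and refines the natural order (the pencil $\{g\}$ witnesses $g \preceq g'$ whenever $g \leq g'$), that $\preceq$ is transitive, that it is compatible with compatible joins, and that $\mathbf{d}(s) \equiv \mathbf{r}(s)$ for every $s$ (witnessed by the one-element pencils $\{s\}$ and $\{s^{-1}\}$).

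For the direction $\equiv$ universal $\Rightarrow$ $0$-simplifying, let $I$ be a nonzero $\vee$-ideal and pick $0 \neq a \in I$. As $I$ is an ideal, $f := \mathbf{d}(a) = a^{-1}a$ is a nonzero idempotent of $I$. Given any nonzero idempotent $e$, universality of $\equiv$ gives $e \equiv f$, in particular $e \preceq f$; choosing a pencil $\{x_{i}\}$ from $e$ to $f$, the computation above shows each $\mathbf{d}(x_{i}) = x_{i}^{-1} f x_{i} \in SfS \subseteq I$, and since the $\mathbf{d}(x_{i})$ are pairwise compatible (all lie below $e$) and $I$ is $\vee$-closed, we obtain $e = \bigvee_{i} \mathbf{d}(x_{i}) \in I$. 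Hence $I$ contains every nonzero idempotent, therefore every idempotent, and finally every element via $s = s\,\mathbf{d}(s) \in SI \subseteq I$. Thus $I = S$, as required.

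For the converse, fix a nonzero idempotent $f$ and consider $J := \{0\} \cup \{ s \in S : \mathbf{d}(s) \preceq f \}$. I would show that $J$ is a nonzero $\vee$-ideal; $0$-simplifying then forces $J = S$, so every nonzero idempotent satisfies $e = \mathbf{d}(e) \preceq f$, and by symmetry $f \preceq e$, giving $e \equiv f$ and hence universality. That $J$ is a two-sided ideal uses the estimates $\mathbf{d}(ts) \leq \mathbf{d}(s)$ and $\mathbf{r}(st) \leq \mathbf{r}(s)$ together with $\mathbf{d}(w) \equiv \mathbf{r}(w)$: for $ts$ we chain $\mathbf{d}(ts) \preceq \mathbf{d}(s) \preceq f$, while for $st$ we chain $\mathbf{d}(st) \preceq \mathbf{r}(st) \preceq \mathbf{r}(s) \preceq \mathbf{d}(s) \preceq f$, each step being either $\leq \Rightarrow \preceq$, a half of $\equiv$, or transitivity. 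That $J$ is $\vee$-closed follows from $\mathbf{d}(a \vee b) = \mathbf{d}(a) \vee \mathbf{d}(b)$ (Lemma~\ref{le:rik}) together with the compatibility of $\preceq$ with joins. Nonzero-ness is clear since $f \in J$.

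The hard part will be the pencil calculus underlying transitivity and join-compatibility of $\preceq$; these are routine but delicate manipulations that lean on distributivity. Transitivity is the most substantial: given pencils $\{x_{i}\}$ from $e$ to $g$ and $\{y_{j}\}$ from $g$ to $f$, I would form the composites $z_{ij} = y_{j}\,(\mathbf{d}(y_{j})\mathbf{r}(x_{i}))\,x_{i}$ and verify, using that multiplication distributes over the relevant joins, that $\mathbf{d}(z_{ij}) = x_{i}^{-1}(\mathbf{d}(y_{j})\mathbf{r}(x_{i}))x_{i}$ and hence $\bigvee_{i,j}\mathbf{d}(z_{ij}) = \bigvee_{i} x_{i}^{-1}\bigl(g\,\mathbf{r}(x_{i})\bigr)x_{i} = \bigvee_{i} x_{i}^{-1}x_{i} = e$, while each $\mathbf{r}(z_{ij}) \leq \mathbf{r}(y_{j}) \leq f$; so $\{z_{ij}\}$ is a pencil from $e$ to $f$. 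Join-compatibility, by contrast, is obtained by simply concatenating two pencils into one. None of these is individually difficult, but assembling them cleanly—and keeping track of which joins are guaranteed to exist—is the part that demands care.
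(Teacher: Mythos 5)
Your proof is correct, and the forward direction is essentially the paper's own argument. The converse, however, takes a genuinely different route. The paper starts from the ideal $SeS$ and passes to the $\vee$-ideal $(SeS)^{\vee}$ of finite compatible joins of its elements; $0$-simplifying forces $(SeS)^{\vee}=S$, so $f=\bigvee_{i}e_{i}$ with $e_{i}\in SeS$, and a short computation ($e_{i}=y_{i}ex_{i}$, $u_{i}=e_{i}y_{i}e$, $v_{i}=ex_{i}e_{i}$, whence $v_{i}=u_{i}^{-1}$, $e_{i}=\mathbf{d}(v_{i})$ and $\mathbf{r}(v_{i})\leq e$) extracts a pencil from $f$ to $e$ directly. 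This sidesteps any need to know that $\preceq$ is transitive or compatible with joins. You instead build the candidate $\vee$-ideal $J=\{0\}\cup\{s:\mathbf{d}(s)\preceq f\}$, which requires you to first establish the pencil calculus — reflexivity, $\leq\,\subseteq\,\preceq$, $\mathbf{d}(s)\equiv\mathbf{r}(s)$, transitivity via the composites $z_{ij}=y_{j}\mathbf{r}(x_{i})x_{i}$, and join-compatibility by concatenation — and your verifications of these are sound (the key identity $\bigvee_{j}\mathbf{d}(z_{ij})=x_{i}^{-1}\bigl(g\,\mathbf{r}(x_{i})\bigr)x_{i}=\mathbf{d}(x_{i})$ uses exactly the distributivity you cite). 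The trade-off: the paper's converse is shorter and more self-contained, while yours front-loads structural facts about $\preceq$ (it is a preorder respected by the ideal and join structure) that are conceptually illuminating and reusable — indeed these are the properties Lenz's original treatment isolates. Both arguments are complete; one small point of hygiene in yours is to note that zero composites $z_{ij}$ may be discarded (or are harmless) in the resulting pencil.
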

\begin{proof}
Suppose that $\equiv$ is the universal relation on the set of non-zero idempotents.
Let $0 \neq I \subseteq S$ where $I$ is a $\vee$-ideal.
Clearly there is a non-zero idempotent $e \in I$.
Let $f$ be any non-zero idempotent in $S$.
Then $f \equiv e$ and so, in particular,  $f \preceq e$.
It follows that there are elements $x_{i}$, where $1 \leq i \leq n$,
where $\mathbf{r}(x_{i}) \leq e$ and $f = \bigvee_{i=1}^{n} \mathbf{d}(x_{i})$.
We have that $ex_{i} = x_{i}$ and so $x_{i} \in I$.
But then $\mathbf{d}(x_{i}) \in I$.
But $I$ is a $\vee$-ideal and so $f \in I$, as required.

Before we prove the converse, observe that if $I$ is any ideal of $S$ then
the set $I^{\vee}$ consisting of all finite joins of non-empty compatible subsets of $I$ is a $\vee$-ideal.
Let $S$ be $0$-simplifying.
Let $e$ and $f$ be non-zero idempotents. 
By assumption, $(SeS)^{\vee} = S$.
Then $f = \bigvee_{i=1}^{m} e_{i}$ form some $e_{i} \in SeS$  where $1 \leq i \leq m$.
Let $e_{i} = y_{i}ex_{i}$.
Then $e_{i} = e_{i}y_{i}ex_{i}e_{i}$.
Put $e_{i}y_{i}e = u_{i}$ and $ex_{i}e_{i} = v_{i}$.
We have that $u_{i} = u_{i}v_{i}u_{i}$ and $v_{i}u_{i}v_{i} = v_{i}$.
Thus $v_{i} = u_{i}^{-1}$.
Then 
$e_{i} = v_{i}^{-1}v_{i}$ 
and
$v_{i}v_{i}^{-1} \leq e$.
We have therefore proved that $f \preceq e$.
\end{proof}

A {\em Tarski monoid} is a countable Boolean inverse $\wedge$-monoid whose semilattice of idempotents is a Tarski algebra.
Such monoids arise naturally as we now show.
The following was suggested by the first line in the proof of \cite[Theorem~6.11]{Matui12}.

\begin{proposition}\label{prop:blop} Let $S$ be a countable Boolean inverse $\wedge$-monoid.
If $S$ is $0$-simplifying then either $E(S)$ is the Tarski algebra or $E(S)$ is finite.
\end{proposition}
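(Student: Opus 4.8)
The plan is to reduce the statement to Tarski's theorem: since $S$ is countable, $E(S)$ is a countable Boolean algebra, and an atomless countable Boolean algebra is forced to be the Tarski algebra. So I would argue by cases according to whether $E(S)$ possesses an atom. If it has none, $E(S)$ is atomless and hence the Tarski algebra, giving the first alternative. The entire effort then goes into showing that the existence of even a single atom, together with $0$-simplifying, forces $E(S)$ to be finite, which is the second alternative.

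First I would isolate the key structural observation that the relation $\mathscr{D}$ on idempotents preserves atoms: if $x \in S$ and $\mathbf{d}(x)$ is an atom of $E(S)$, then $\mathbf{r}(x)$ is also an atom, and conversely. This holds because conjugation $e' \mapsto xe'x^{-1}$ restricts to an order isomorphism from the idempotents below $\mathbf{d}(x)$ onto the idempotents below $\mathbf{r}(x)$, with inverse given by conjugation by $x^{-1}$. An atom is precisely an idempotent with exactly two idempotents beneath it, namely $0$ and itself, and this two-element structure is transported across the isomorphism. (Here I use that any element of $S$ lying beneath an idempotent is itself idempotent, so that atoms of $E(S)$ coincide with atoms of $S$ below the identity.)

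Next I would bring in the hypothesis through Lemma~\ref{lem:toby}, which tells us that for a $0$-simplifying monoid the relation $\equiv$ is universal on the non-zero idempotents. Assuming $E(S)$ has an atom $e$, I would take an arbitrary non-zero idempotent $f$ and use the direction $f \preceq e$. This supplies a pencil $x_{1}, \ldots, x_{m}$ with $f = \bigvee_{i} \mathbf{d}(x_{i})$ and $\mathbf{r}(x_{i}) \leq e$ for each $i$. Since $e$ is an atom, each $\mathbf{r}(x_{i})$ is either $0$ or $e$; the terms with $\mathbf{r}(x_{i}) = 0$ have $x_{i} = 0$ and contribute nothing to the join, so I may discard them. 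For each surviving term $\mathbf{r}(x_{i}) = e$ is an atom, whence $\mathbf{d}(x_{i})$ is an atom by the preservation observation. Thus every non-zero idempotent $f$ is a finite join of atoms.

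Finally I would apply this with $f = 1$, producing a finite partition of the identity into atoms $1 = a_{1} \vee \cdots \vee a_{n}$. Working inside the Boolean algebra $E(S)$, any idempotent satisfies $e = e \wedge 1 = \bigvee_{i}(e \wedge a_{i})$, and each $e \wedge a_{i}$ lies in $\{0, a_{i}\}$, so $e$ is determined by the subset of atoms beneath it. Hence $\lvert E(S) \rvert \leq 2^{n}$ and $E(S)$ is finite. The main obstacle is the atom-preservation step together with the bookkeeping of the pencil: one must verify carefully that the zero terms can be discarded and that what remains consists genuinely of atoms, and this is exactly where the order-isomorphism property of conjugation by $x$ is essential.
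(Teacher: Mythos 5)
Your proposal is correct and follows essentially the same route as the paper: atomless plus countable gives the Tarski algebra via Tarski's theorem, while a single atom is propagated by the pencil from $f\preceq e$ (using Lemma~\ref{lem:toby}) and the fact that $\mathscr{D}$ preserves atoms, so that $1$ is a finite join of atoms and $E(S)$ has at most $2^{n}$ elements. The only cosmetic difference is that you justify atom-preservation via the order isomorphism $e'\mapsto xe'x^{-1}$ between principal ideals, where the paper runs the equivalent direct computation with $ia\leq a$.
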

\begin{proof} We prove first that if $e$ is an atom and $e \, \mathscr{D} \, f$ then $f$ is an atom.
Let $e \stackrel{a}{\rightarrow} f$.
Suppose that $i \leq f$.
Then $ia \leq a$.
Hence $\mathbf{d}(ia) \leq e$.
Since $e$ is an atom, it follows that $\mathbf{d}(ia) = e$ or  $\mathbf{d}(ia) = 0$.
If $\mathbf{d}(ia) = 0$ then $ia = 0$ and so $i = 0$.
If  $\mathbf{d}(ia) = e$ then $ia = a$ and $i = f$.
It follows that $f$ is an atom.

If $E(S)$ contains no atoms it is a Tarski algebra since it is countable and cannot be finite.
We may therefore suppose that $E(S)$ contains at least one atom $e$.

Let $f$ be any non-zero idempotent.
By assumption $f \preceq e$.
There are therefore a finite number of {\em non-zero} elements $x_{1}, \ldots, x_{m}$ of $S$ such that
$f = \bigvee_{i=1}^{m} \mathbf{d}(x_{i})$ and $\mathbf{r}(x_{i}) \leq e$.
But $e$ is an atom and so $\mathbf{r}(x_{i}) = e$ for $i = 1, \ldots, m$.
But then $\mathbf{d}(x_{i})$ is an atom.
We have therefore proved that each non-zero element of $E(S)$ is the join of a finite number of atoms,
and all the atoms are $\mathscr{D}$-related to $e$.
It is an immediate consequence that any atom in $E(S)$ is $\mathscr{D}$-related to $e$.
Thus all the atoms of $E(S)$ form a single $\mathscr{D}$-class.
Since the identity is an idempotent it is the join of a finite number of atoms, say $e_{1}, \ldots, e_{m}$.
Then by distributivity, every non-zero idempotent is a join of some of these $m$ idempotents.
It follows that there are exactly $m$ atoms and the Boolean algebra of idempotents is finite with $2^{m}$ elements.
\end{proof}

If $S$ is fundamental then it may be embedded in the Munn semigroup of its semilattice of idempotents.
The proof of the following is now immediate.

\begin{corollary}\label{cor:judy} Let $S$ be a fundamental countable Boolean inverse $\wedge$-monoid.
If $S$ is $0$-simplifying then either $S$ is a Tarski monoid or $S$ is a finite symmetric inverse monoid.
\end{corollary}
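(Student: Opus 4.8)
The plan is to feed Proposition~\ref{prop:blop} directly into the dichotomy it produces and then dispose of each case separately. Since $S$ is fundamental, the discussion immediately preceding the statement gives an embedding of $S$ into the Munn semigroup $T_{E(S)}$, and I would keep this embedding in reserve for the finite case. Applying Proposition~\ref{prop:blop} to the $0$-simplifying monoid $S$, its semilattice of idempotents $E(S)$ is either the Tarski algebra or finite, and these two alternatives will correspond exactly to the two conclusions of the corollary.

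In the first case, where $E(S)$ is the Tarski algebra, there is essentially nothing to prove: $S$ is by hypothesis a countable Boolean inverse $\wedge$-monoid whose semilattice of idempotents is the Tarski algebra, so $S$ is a Tarski monoid by the very definition of that term.

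In the second case, where $E(S)$ is finite, the point is to promote finiteness of $E(S)$ to finiteness of $S$. The Munn semigroup $T_{E(S)}$ consists of the order-isomorphisms between the principal order ideals of $E(S)$; when $E(S)$ is finite there are only finitely many principal order ideals and only finitely many isomorphisms between any two of them, so $T_{E(S)}$ is finite. The embedding of the fundamental monoid $S$ into $T_{E(S)}$ then forces $S$ itself to be finite. At this stage $S$ is a finite Boolean inverse $\wedge$-monoid that is both $0$-simplifying and fundamental, so Theorem~\ref{them:bongo}(3) identifies it as a finite symmetric inverse monoid, which completes the argument.

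The only step that is not purely formal is the passage from a finite semilattice of idempotents to a finite monoid, but once the Munn embedding is invoked this is immediate, which is presumably why the author describes the proof as immediate. Everything else is bookkeeping between the definition of a Tarski monoid and the classification recorded in Theorem~\ref{them:bongo}.
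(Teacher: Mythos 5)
Your proposal is correct and matches the paper's intended argument: the author's own proof consists precisely of the remark that a fundamental monoid embeds in the Munn semigroup $T_{E(S)}$, combined with the dichotomy of Proposition~\ref{prop:blop} and the classification in Theorem~\ref{them:bongo}(3) for the finite case. You have simply written out the bookkeeping that the paper declares ``immediate.''
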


\begin{remark}{\em  The above corollary suggests that $0$-simplifying, fundamental Tarski monoids are natural generalizations of finite symmetric inverse monoids.
This will guide us in developing the theory of such monoids.}
\end{remark}

\section{A non-commutative dictionary}

The goal of this section is to describe how non-commutative Stone duality enables us to construct
a dictionary between Boolean inverse $\wedge$-monoids and a class of \'etale groupoids.
We refer to \cite{Lawson10b, Lawson12} for any proofs.

\subsection{Non-commutative Stone duality}
Let $G$ be a groupoid.
We denote its set of identities by $G_{o}$ and the domain and range maps by $\mathbf{d}$ and $\mathbf{r}$, respectively.
It is said to be a {\em topological groupoid} if the groupoid multiplication and the maps $\mathbf{d}$, $\mathbf{r}$ and inversion are continuous.
It is said to be an {\em \'etale (topological) groupoid} if the maps $\mathbf{d}$ and $\mathbf{r}$ are local homeomorphisms.
It is the \'etale property that is crucial for the connections between \'etale groupoids and semigroups
since it implies that the open subsets of $G$ form a monoid under multiplication of subsets of $G$ \cite[Chapter 1]{Res1}.
We shall be interested in \'etale groupoids where we impose additional conditions on the space of identities.

Let $S$ be a Boolean $\wedge$-monoid.
A {\em filter} in $S$ is a subset $A \subseteq S$ which is closed under finite meets and closed upwards.
It is said to be {\em proper} if $0 \notin A$.
A set $X \subseteq S$ is called a {\em filter base} if for all $a,b \in X$ there exisrs $c \in X$ such that $c \leq a,b$.
The proof of the following is straightforward.

\begin{lemma} Let $X$ be a filter base in a Boolean inverse semigroup.
Then $X^{\uparrow}$ is a filter.
\end{lemma}

A maximal proper filter is called an {\em ultrafilter}.
Ultrafilters may be characterized amongst proper filters by means of the following.
If $A$ is a filter and $s \in S$ we write  $s \wedge A \neq 0$ to mean $s \wedge a \neq 0$ for all $a \in A$.

\begin{lemma}\label{lem:exel} Let $S$ be a Boolean inverse $\wedge$-monoid.
A proper filter $A$ is an ultrafilter if, and only if, $s \wedge A \neq 0$ implies that $ s \in A$.
\end{lemma}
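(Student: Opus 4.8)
The plan is to prove the two implications separately, with the forward direction (ultrafilter $\Rightarrow$ the stated property) being the substantive one; both rest on the fact that in a Boolean inverse $\wedge$-monoid every pair of elements has a meet and that filters are closed under meets.

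\emph{The easy direction.} Suppose $A$ is a proper filter with the property that $s \wedge A \neq 0$ forces $s \in A$, and let $B$ be any proper filter with $A \subseteq B$. To establish maximality I would take an arbitrary $s \in B$ and show $s \in A$. Given any $a \in A$ we also have $a \in B$, so $s \wedge a \in B$ because $B$ is closed under meets; since $B$ is proper this forces $s \wedge a \neq 0$. As $a$ was arbitrary we get $s \wedge A \neq 0$, and the hypothesis yields $s \in A$. Hence $B \subseteq A$, so $A = B$ and $A$ is an ultrafilter. This direction uses only closure under finite meets together with properness.

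\emph{The main direction.} Suppose now that $A$ is an ultrafilter and that $s \wedge A \neq 0$; I want to produce $s$ inside $A$. The idea is to manufacture a proper filter containing both $A$ and $s$ and then appeal to maximality. I would set $X = \{ s \wedge a : a \in A \}$ and check that $X$ is a filter base: since $A$ is closed under meets, for $a,b \in A$ the element $s \wedge (a \wedge b)$ lies in $X$ and sits below both $s \wedge a$ and $s \wedge b$. Every member of $X$ is non-zero precisely because $s \wedge A \neq 0$, so by the preceding lemma on filter bases $X^{\uparrow}$ is a filter, and it is proper since nothing below a non-zero element of $X$ can equal $0$.

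Finally I would observe that $A \subseteq X^{\uparrow}$ and $s \in X^{\uparrow}$: for each $a \in A$ the inequality $s \wedge a \leq a$ gives $a \in X^{\uparrow}$, while $s \wedge a \leq s$ gives $s \in X^{\uparrow}$ (using that $A$ is non-empty, which holds since an ultrafilter contains $1$). Since $A$ is a maximal proper filter and $X^{\uparrow}$ is a proper filter containing it, maximality forces $A = X^{\uparrow}$, whence $s \in A$ as desired. The one point requiring care is verifying that $X^{\uparrow}$ is genuinely proper; this is exactly where the hypothesis $s \wedge A \neq 0$ is consumed, and it is the crux of the argument, the remainder being a routine application of the filter-base lemma and of maximality.
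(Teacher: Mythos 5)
Your proof is correct. The paper itself states Lemma~\ref{lem:exel} without proof (it defers all proofs in that subsection to \cite{Lawson10b,Lawson12}), and your argument is the standard one: the easy direction uses only closure under meets and properness of a putative larger filter, and the main direction builds the proper filter $\{s\wedge a : a\in A\}^{\uparrow}$ containing both $A$ and $s$ and invokes maximality, exactly as one would expect. The only blemish is the justification of properness of $X^{\uparrow}$: the sentence ``nothing below a non-zero element of $X$ can equal $0$'' has the inequality the wrong way round --- what you need is that $0\in X^{\uparrow}$ would require some $x\in X$ with $x\leq 0$, forcing $x=0$, contrary to the hypothesis $s\wedge A\neq 0$. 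With that wording repaired the argument is complete.
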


A proper filter $A$ is said to be {\em prime} if $a \vee b \in A$ implies that $a \in A$ or $b \in A$.

\begin{lemma}\label{lem:molina} Let $S$ be a Boolean inverse $\wedge$-monoid.
A proper filter is prime if, and only if, it is an ultrafilter.
\end{lemma}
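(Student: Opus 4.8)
The plan is to prove both implications by way of the ultrafilter criterion established in Lemma~\ref{lem:exel}, namely that a proper filter $A$ is an ultrafilter precisely when $s \wedge A \neq 0$ forces $s \in A$. The implication from ultrafilter to prime is the routine one. Suppose $A$ is an ultrafilter and $a \vee b \in A$, but, seeking a contradiction, neither $a$ nor $b$ lies in $A$. By the contrapositive of Lemma~\ref{lem:exel} there are $c_{1}, c_{2} \in A$ with $a \wedge c_{1} = 0$ and $b \wedge c_{2} = 0$; putting $c = c_{1} \wedge c_{2} \in A$ and using monotonicity of meets yields $a \wedge c = b \wedge c = 0$. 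Since $(a \vee b) \wedge c$ is a meet of two members of $A$ it lies in $A$ and so is non-zero, whereas Lemma~\ref{lem:meets_and_joins} computes it as $(c \wedge a) \vee (c \wedge b) = 0$, a contradiction. Hence $a \in A$ or $b \in A$.

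For the converse I would verify the criterion of Lemma~\ref{lem:exel} directly. Assume $A$ is prime and that $s \wedge a \neq 0$ for every $a \in A$, and fix an arbitrary $a \in A$. The key device is to split $a$ as an orthogonal join in which one summand sits beneath $s$. Set $e = \mathbf{d}(s \wedge a)$ (the meet exists because $S$ is a $\wedge$-monoid) and let $\bar{e}$ be the complement of $e$ in the Boolean algebra $E(S)$. Using the standard identity that $u \leq a$ implies $u = a\mathbf{d}(u)$, one gets $s \wedge a = ae$, and then distributivity of multiplication over the compatible join $e \vee \bar{e} = 1$ gives $a = ae \vee a\bar{e} = (s \wedge a) \vee a\bar{e}$, an orthogonal join of two elements lying below $a$. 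Since this join equals $a \in A$, primeness forces $s \wedge a \in A$ or $a\bar{e} \in A$. In the first case $s \wedge a \leq s$ and upward closure give $s \in A$, as desired. The second case is excluded: $a\bar{e} \leq a$ is orthogonal to $ae = s \wedge a$, so $s \wedge (a\bar{e}) \leq (s \wedge a) \wedge a\bar{e} = 0$, contradicting $s \wedge A \neq 0$ applied to $a\bar{e} \in A$. Thus $s \in A$, and $A$ is an ultrafilter.

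The genuinely delicate step is this second implication, and within it the construction of the orthogonal complement $a\bar{e}$ of $s \wedge a$ inside the non-idempotent element $a$. The point requiring care is that multiplication really does distribute over the join $e \vee \bar{e}$, so that $a = (s \wedge a) \vee a\bar{e}$ holds on the nose, and that the two summands are genuinely orthogonal; both facts rest on the Boolean structure of $E(S)$ together with the identity $u = a\mathbf{d}(u)$ for $u \leq a$. Once this decomposition is in hand the remaining verifications, the orthogonality of $ae$ and $a\bar{e}$ and the vanishing of $s \wedge (a\bar{e})$, are short computations with domain and range idempotents, and the case analysis closes immediately.
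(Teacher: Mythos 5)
The paper states Lemma~\ref{lem:molina} without proof, deferring to the references cited at the start of Section~3, so there is no in-text argument to compare against. Your proof is correct: the forward direction via the contrapositive of Lemma~\ref{lem:exel} combined with Lemma~\ref{lem:meets_and_joins} checks out, and the converse via the orthogonal decomposition $a = (s \wedge a) \vee a\bar{e}$ with $e = \mathbf{d}(s \wedge a)$ is exactly the standard device used in the cited sources for this result.
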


The set of all ultrafilters of $S$ is denoted by $\mathsf{G}(S)$.
If $A$ is an ultrafilter, define
$$
\mathbf{d}(A) = (A^{-1}A)^{\uparrow}
\mbox{ and }
\mathbf{r}(A) = (AA^{-1})^{\uparrow},
$$
both ultrafilters.
Define a partial binary operation $\cdot$ on  $\mathsf{G}(S)$ by
$$A \cdot B = (AB)^{\uparrow}$$
if $\mathbf{d}(A) = \mathbf{r}(B)$, and undefined otherwise.
Then this is well-defined and $(\mathsf{G}(S), \cdot)$ is a groupoid.
Those ultrafilters that are identities in the groupoid are called {\em idempotent ultrafilters}.
They are precisely the ultrafilters that are also inverse submonoids.
We denote this set by  $\mathsf{G}(S)_{o}$.
If $F \subseteq E(S)$ is an ultrafilter then $F^{\uparrow}$ is an idempotent ultrafilter in $S$ and every idempotent ultrafilter is of this form.
If $G$ is an idempotent ultrafilter in $S$ and $a \in S$ is such that $a^{-1}a \in G$ then $A = (aG)^{\uparrow}$ is an ultrafilter
where $\mathbf{d}(A) = G$, and every ultrafilter in $S$ is constructed in this way.
Denote by $V_{a}$ the set of all ultrafilters in $S$ that contain the element $a$.

\begin{lemma}\label{lem:pele} Let $S$ be a Boolean inverse $\wedge$-monoid.
\begin{enumerate}
 
\item $V_{a} \subseteq V_{b}$ if and only if $a \leq b$.

\item $V_{a}V_{b} = V_{ab}$.

\item $V_{a} \cap V_{b} = V_{a \wedge b}$.

\item If $a \vee b$ exists then $V_{a} \cup V_{b} = V_{a \vee b}$.

\item $V_{a}$ consists of idempotent ultrafilters if and only if $a$ is an idempotent.

\end{enumerate}
\end{lemma}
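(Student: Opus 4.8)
The plan is to prove each of the five statements essentially from the definitions, using the characterization of ultrafilters as prime filters (Lemma~\ref{lem:molina}) where needed, and to exploit the fact that $V_a$ records exactly which ultrafilters contain $a$.

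For part (1), I would argue as follows. If $a \leq b$ and $A \in V_a$, then $a \in A$; since $A$ is closed upwards and $a \leq b$, we get $b \in A$, so $A \in V_b$. This gives $V_a \subseteq V_b$. For the converse, suppose $V_a \subseteq V_b$. The cleanest route is contrapositive: if $a \not\leq b$, I want to produce an ultrafilter containing $a$ but not $b$. Since $a \not\leq b$, in the Boolean setting we have $a \wedge b < a$, so the element $a$ is not below $b$ and hence $a \wedge \bar{b}$-type reasoning at the level of domain idempotents applies; more concretely, consider the domain idempotent $\mathbf{d}(a)$ and the idempotent $\mathbf{d}(a \wedge b)$, which differ. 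Using that every proper filter extends to an ultrafilter (Zorn's lemma) and that one can find an ultrafilter $A$ with $a \in A$ but $a \wedge b \notin A$, one separates $a$ from $b$: if $b$ were in $A$ then $a \wedge b = a \wedge b$ would be in $A$ by closure under meets together with $a \in A$. The main care here is choosing the idempotent to exclude correctly and invoking that ultrafilters of the Boolean algebra $E(S)$ separate distinct idempotents.

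For part (2), the inclusion $V_a V_b \subseteq V_{ab}$ should follow because if $A \in V_a$ and $B \in V_b$ with $\mathbf{d}(A) = \mathbf{r}(B)$, then $ab \in AB \subseteq (AB)^{\uparrow} = A \cdot B$, so $A \cdot B \in V_{ab}$. The reverse inclusion $V_{ab} \subseteq V_a V_b$ requires taking an ultrafilter $C$ containing $ab$ and factoring it as $A \cdot B$ with $a \in A$, $b \in B$; the natural candidates are $A = (a \mathbf{d}(C))^{\uparrow}$-style constructions, using the description quoted just before the lemma of how every ultrafilter arises as $(aG)^{\uparrow}$. For part (3), $V_a \cap V_b = V_{a \wedge b}$ is almost immediate: $A$ contains both $a$ and $b$ iff $A$ contains $a \wedge b$, using closure under meets in one direction and closure upwards in the other (since $a \wedge b \leq a, b$). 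Part (4) uses the prime-filter characterization directly: $A \in V_{a \vee b}$ means $a \vee b \in A$, which by primeness (Lemma~\ref{lem:molina}) forces $a \in A$ or $b \in A$, i.e. $A \in V_a \cup V_b$; conversely $a \in A$ or $b \in A$ gives $a \vee b \in A$ by upward closure. Part (5) follows since $A \in V_a$ for idempotent $a$ forces $a \in A$ with $a = a^{-1} = a^2$, and idempotent ultrafilters are exactly the inverse submonoids; conversely if every element of $V_a$ is idempotent one recovers that $a$ itself must be idempotent by testing against the ultrafilter generated by $a$.

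The main obstacle I anticipate is the converse direction of part (1) and the reverse inclusion in part (2), since these are the only places requiring the construction of a witnessing ultrafilter rather than mere manipulation of filter axioms. For (1) the delicate point is separating $a$ from $b$ by an ultrafilter, which relies on the Boolean structure of $E(S)$ and the correspondence between ultrafilters of $S$ and idempotent ultrafilters together with a choice of element over them. For (2) the subtlety is checking that the factorization $C = A \cdot B$ genuinely has $a \in A$ and $b \in B$ and that the domain-range matching condition $\mathbf{d}(A) = \mathbf{r}(B)$ holds; this is where the explicit description of ultrafilters as $(aG)^{\uparrow}$ for an idempotent ultrafilter $G$ with $\mathbf{d}(a) \in G$ does the real work. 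The remaining parts (3), (4), (5) I expect to be routine given Lemmas~\ref{lem:exel} and~\ref{lem:molina}.
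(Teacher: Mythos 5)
The paper itself does not prove this lemma; it is quoted from \cite{Lawson10b,Lawson12} (the section opens with ``We refer to \cite{Lawson10b,Lawson12} for any proofs''), so there is no internal proof to compare against. Your plan is correct and is the standard argument: parts (3) and (4) and the easy directions of (1), (2), (5) are exactly as you describe, and the two constructions you flag as delicate are indeed the only real work. Three details to nail down when you write it out. For the converse of (1), the separating element is $c = a\,\overline{\mathbf{d}(a\wedge b)}$: if $a\not\leq b$ then $\mathbf{d}(a\wedge b)<\mathbf{d}(a)$ (equality would give $a\wedge b=a\,\mathbf{d}(a\wedge b)=a$), so $c\neq 0$, $c\leq a$ and $c\wedge(a\wedge b)=0$; any ultrafilter containing $c$ then contains $a$ but cannot contain $b$. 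For the reverse inclusion in (2), take $G=\mathbf{d}(C)$, observe that $(ab)^{-1}ab\leq b^{-1}b$ places $b^{-1}b$ in $G$, set $B=(bG)^{\uparrow}$ and $A=C\cdot B^{-1}$; then $abb^{-1}\in A$ and $abb^{-1}\leq a$ force $a\in A$, and $A\cdot B=C$. In (5), the phrase ``the ultrafilter generated by $a$'' is not well defined (the principal filter $a^{\uparrow}$ is generally not an ultrafilter and there is no canonical choice); what you want is that \emph{every} ultrafilter $A$ containing $a$ is, by hypothesis, an inverse submonoid, hence contains $1$ and therefore $\phi(a)=a\wedge 1$, so that $V_a=V_{\phi(a)}$ and part (1) yields $a=\phi(a)\leq 1$, an idempotent. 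With these points made explicit the argument is complete.
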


Put $\tau = \{V_{a} \colon a \in S\}$.
Then $\tau$ is the basis for a topology on $\mathsf{G}(S)$
with respect to which it is a Hausdorff \'etale topological groupoid
such that  $\mathsf{G}(S)_{o}$ is Hausdorff, compact and has  a basis of clopen subsets.
We call any topological groupoid satisfying these properties a {\em Boolean groupoid}.
If $G$ is an arbitrary groupoid a subset $X \subseteq G$ is called a {\em local bisection} if $X^{-1}X,XX^{-1} \subseteq G_{o}$.
If $G$ is a Boolean groupoid, we denote by $\mathsf{B}(G)$ the set of all compact-open local bisections of $G$.
This is a Boolean inverse $\wedge$-monoid.

\begin{theorem}[Non-commutative Stone duality]\label{them:duality} For suitable definitions of morphisms,
the category of Boolean inverse $\wedge$-monoids is dually equivalent to the category of Boolean groupoids
under the functors $S \mapsto \mathsf{G}(S)$ and $G \mapsto \mathsf{B}(G)$.
\end{theorem}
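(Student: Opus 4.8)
The plan is to prove the stated dual equivalence by exhibiting, at the level of objects, a natural isomorphism $S \cong \mathsf{B}(\mathsf{G}(S))$ and a natural isomorphism $G \cong \mathsf{G}(\mathsf{B}(G))$, and then to check that both assignments are functorial for a suitably restricted class of morphisms on each side, these isomorphisms furnishing the unit and counit of the duality. I would treat the two object-level isomorphisms first, since they carry essentially all of the mathematical content, and deal with functoriality and naturality afterwards.

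For the first isomorphism, define $\eta_S \colon S \to \mathsf{B}(\mathsf{G}(S))$ by $a \mapsto V_a$. That each $V_a$ is a compact-open local bisection follows from part (5) of Lemma~\ref{lem:pele} together with the fact that the $V_a$ form a basis of compact-open sets for the topology on $\mathsf{G}(S)$. Parts (2), (3) and (4) of Lemma~\ref{lem:pele} show that $\eta_S$ preserves products, meets and compatible joins, so it is a morphism, and part (1) gives $V_a = V_b \iff a = b$, so $\eta_S$ is injective. The one point requiring real argument is surjectivity: given a compact-open local bisection $U$, openness expresses $U$ as a union of basic sets $V_{a_i}$, and compactness reduces this to a finite union $U = V_{a_1} \cup \dots \cup V_{a_n}$; the local bisection condition forces the $a_i$ to be pairwise compatible, so $a_1 \vee \dots \vee a_n$ exists in $S$, and part (4) of Lemma~\ref{lem:pele} yields $U = V_{a_1 \vee \dots \vee a_n}$. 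Thus $\eta_S$ is an isomorphism.

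For the second isomorphism, define $\epsilon_G \colon G \to \mathsf{G}(\mathsf{B}(G))$ by sending $g$ to the set of all compact-open local bisections containing $g$; one checks using Lemmas~\ref{lem:exel} and~\ref{lem:molina} that this set is genuinely an ultrafilter. That $\epsilon_G$ is injective is where the Hausdorff hypothesis on $G$ enters, since distinct points are separated by a compact-open local bisection and so yield distinct ultrafilters. The substantive step is surjectivity, that every ultrafilter of $\mathsf{B}(G)$ is the neighbourhood data of a unique point: here I would use that $G_o$ is compact with a basis of clopen sets, so that the images under $\mathbf{d}$ of the members of an ultrafilter have the finite intersection property and hence a common point, the \'etale local-bisection structure then pinning down a unique $g$ lying over it. Verifying that $\epsilon_G$ and its inverse are continuous, and that $\epsilon_G$ respects $\mathbf{d}$, $\mathbf{r}$ and the partial multiplication, is then routine from the definitions of the groupoid operations on $\mathsf{G}(\mathsf{B}(G))$.

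It remains to fix the morphisms. On the monoid side I would take the morphisms to be the unital, meet-preserving, compatible-join-preserving maps $\theta$ for which preimages of ultrafilters are again ultrafilters (the proper morphisms of \cite{Lawson12}), and on the groupoid side the corresponding continuous proper covering functors; the assignments $\theta \mapsto \mathsf{G}(\theta)$ (via preimage of ultrafilters) and $\phi \mapsto \mathsf{B}(\phi)$ (via preimage of compact-open local bisections) are then contravariant functors, and the families $\eta$ and $\epsilon$ are natural. \textbf{The main obstacle} is exactly this last matching: the object-level bijections are forced by Lemma~\ref{lem:pele} and the separation and compactness properties of Boolean groupoids, but an arbitrary morphism need not send ultrafilters to ultrafilters, so the real work lies in isolating the morphism class on each side that makes $\mathsf{G}$ and $\mathsf{B}$ mutually inverse and in checking that preimage along such a morphism preserves ultrafilters, with the spatial reconstruction encoded in $\epsilon_G$ being the technical heart.
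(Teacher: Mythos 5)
The paper does not prove this theorem itself; it is quoted with the remark that proofs are to be found in \cite{Lawson10b} and \cite{Lawson12}, and your sketch reconstructs essentially the argument given there: the unit $a \mapsto V_a$ with surjectivity via compactness plus the observation that $V_{a_i^{-1}a_j} \subseteq \mathsf{G}(S)_o$ forces compatibility of the $a_i$ by Lemma~\ref{lem:pele}(5), and the counit $g \mapsto \{U \in \mathsf{B}(G) \colon g \in U\}$ with surjectivity via the finite intersection property of compact sets in a Hausdorff space. Your outline is correct, and you rightly identify that the only genuinely delicate issue is pinning down the morphism classes (proper, weakly meet-preserving morphisms versus coherent continuous covering functors), which the theorem statement itself deliberately leaves open with the phrase ``for suitable definitions of morphisms.''
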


If $S$ is a Boolean inverse $\wedge$-monoid, then $\mathsf{X}(S)$ denotes the topological space associated with the Boolean algebra $E(S)$.
It is homeomorphic to $\mathsf{G}(S)_{o}$ and we call it the {\em structure space} of $S$. 
If $e \in E(S)$, we denote by $U_{e}$ the set of all ultrafilters in $E(S)$ that contain $e$.

\subsection{Units and infinitesimals}

The spatial realization theorem deals with the relationship between the group of units of an inverse monoid and the whole inverse monoid.
We therefore need a way of constructing units.
The starting point is provided by the following lemma.

\begin{lemma}\label{lem:spooks} Let $S$ be a Boolean inverse $\wedge$-monoid.
\begin{enumerate}

\item Let $s \in S$ such that $s^{-1}s = ss^{-1}$.
Put $e = s^{-1}s$.
Then $g = s \vee \bar{e}$ is invertible.

\item  $a^{2} = 0$ if, and only if, $a^{-1}a \perp aa^{-1}$  if, and only if, $a \perp a^{-1}$.

\item If $a^{2} = 0$ and $e = \overline{a^{-1}a} \,   \overline{aa^{-1}}$ then
$$u = a^{-1} \vee a \vee e$$
is a non-trivial involution lying above $a$.

\end{enumerate}
\end{lemma}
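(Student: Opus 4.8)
The plan is to handle the three parts in order, each reducing to a short computation with domain idempotents, range idempotents, and the Boolean complement, together with the domain/range formula for joins in Lemma~\ref{le:rik}.

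For part (1), I would first confirm that the join $s \vee \bar{e}$ exists. Since $\mathbf{d}(s) = \mathbf{r}(s) = e$, we have $s = se = es$ and $s^{-1} = s^{-1}e$, so $s\bar{e} = se\bar{e} = 0$ and $s^{-1}\bar{e} = s^{-1}e\bar{e} = 0$; thus $s \perp \bar{e}$, so in particular $s \sim \bar{e}$ and the join exists. Applying Lemma~\ref{le:rik} then gives $\mathbf{d}(g) = \mathbf{d}(s) \vee \mathbf{d}(\bar{e}) = e \vee \bar{e} = 1$, and symmetrically $\mathbf{r}(g) = 1$. An element satisfying $g^{-1}g = gg^{-1} = 1$ is a unit, which settles (1).

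For part (2), I would route all three conditions through the identity $a^{2} = a\bigl(\mathbf{d}(a)\mathbf{r}(a)\bigr)a$, obtained by inserting $\mathbf{d}(a)$ and $\mathbf{r}(a)$ via $a = a\mathbf{d}(a) = \mathbf{r}(a)a$. In the other direction, $\mathbf{d}(a)\mathbf{r}(a) = (a^{-1}a)(aa^{-1}) = a^{-1}a^{2}a^{-1}$, so $a^{2} = 0$ if and only if $\mathbf{d}(a)\mathbf{r}(a) = 0$; and $\mathbf{d}(a)\mathbf{r}(a) = 0$ is precisely $a^{-1}a \perp aa^{-1}$, since orthogonality of idempotents just says their product vanishes. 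For the remaining equivalence, $a \perp a^{-1}$ means $a\cdot a = 0$ and $a^{-1}a^{-1} = 0$; as $(a^{-1})^{2} = (a^{2})^{-1}$ and an element is zero exactly when its inverse is, both conditions collapse to $a^{2} = 0$.

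For part (3), writing $e = \overline{\mathbf{d}(a)}\,\overline{\mathbf{r}(a)}$, I would first show that $a$, $a^{-1}$, $e$ are pairwise orthogonal: $a \perp a^{-1}$ by part (2), while $ae = a\mathbf{d}(a)e = 0$ and $a^{-1}e = a^{-1}\mathbf{r}(a)e = 0$ because $e \leq \overline{\mathbf{d}(a)},\overline{\mathbf{r}(a)}$, giving $a \perp e$ and $a^{-1} \perp e$. Using distributivity I would check $e \perp (a \vee a^{-1})$, so the triple join $u$ exists. Then Lemma~\ref{le:rik} and De Morgan yield $\mathbf{d}(u) = \mathbf{d}(a) \vee \mathbf{r}(a) \vee e = (\mathbf{d}(a) \vee \mathbf{r}(a)) \vee \overline{\mathbf{d}(a) \vee \mathbf{r}(a)} = 1$, and symmetrically $\mathbf{r}(u) = 1$, so $u$ is a unit. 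Since inversion distributes over joins and $e^{-1} = e$, we get $u^{-1} = a \vee a^{-1} \vee e = u$, whence $u^{2} = uu^{-1} = 1$, i.e. $u$ is an involution; and $a \leq u$ is immediate from the definition of $u$. Finally $u \neq 1$, for $a \leq 1$ would force $a = \mathbf{d}(a)$ to be idempotent, contradicting $a \neq 0 = a^{2}$. The step I expect to require the most care is the bookkeeping that guarantees the triple join exists and that the complement $e$ is exactly what is needed to fill out $\mathbf{d}(u)$ and $\mathbf{r}(u)$ to the identity; once the orthogonality relations and the De Morgan identity are in hand, the rest is formal.
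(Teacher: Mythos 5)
Your proposal is correct and follows essentially the same route as the paper: part (1) by computing $\mathbf{d}(g)=\mathbf{r}(g)=e\vee\bar{e}=1$, part (2) by reducing everything to the vanishing of $a^{-1}a\,aa^{-1}$, and part (3) by orthogonality of $a$, $a^{-1}$ and $e$ together with De Morgan. The only cosmetic difference is that the paper deduces (3) by setting $s=a\vee a^{-1}$ and invoking part (1), whereas you verify the unit condition for $u$ directly; the underlying computation is identical.
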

\begin{proof} (1) Simply observe that $g^{-1}g = gg^{-1} = s^{-1}s \vee \overline{s^{-1}s} = 1$.

(2) If $a^{2} = 0$ then $a^{-1}aaa^{-1} = 0$ and so $a^{-1}a \perp aa^{-1}$.
If  $a^{-1}a \perp aa^{-1}$ then $a^{-1}aaa^{-1} = 0$ and so $a^{2} = 0$.
The equivalence of $a^{-1}a \perp aa^{-1}$  with $a \perp a^{-1}$ is immediate.

(3) The elements $a$ and $a^{-1}$ are orthogonal.
Put $s = a \vee a^{-1}$.
Then $s^{-1}s = ss^{-1}$.
Now apply part (1).
It is straightforward to check that it is an involution.
\end{proof}

In calculating with infinitesimals, it becomes essential to draw pictures such as the following where the arrow signifies the direction in which the diagram should be read.
\begin{center}
\begin{tikzpicture}
\draw [fill=gray] (0,0) -- (2,1) -- (2,2) -- (0,1) -- (0,0);

\draw (0,1) -- (0,2);
\draw (2,0) -- (2,1);

\draw [dashed]  (0,2) -- (2,2); 
\draw [dashed]  (0,0) -- (2,0); 

\node [right] at (2,1.5) {$a^{-1}a$};
\node [left] at (0,0.5) {$aa^{-1}$};

\draw [<-,thick] (0.5,0.75) -- (1.5,1.25);

\end{tikzpicture}
\end{center}
\begin{remark} {\em We may paraphrase the above results by saying that every element of $S$ in  the group of units of a local submonoid is beneath an element of the group of units,
as is  every infinitesimal.}
\end{remark}

\begin{example} {\em Let $I_{n}$ be a finite symmetric inverse monoid on $n$ letters.
Examples of infinitesimal elements are those elements of the form $x \mapsto y$ where $x,y \in X$ and $x \neq y$.
The group elements associated with these, as constructed in the above lemma, are precisely the transpositions.}
\end{example}

Our next result is fundamental since it enables us to construct infinitesimals with specific properties.

\begin{proposition}\label{prop:george} Let $S$ be a $0$-simplifying Tarski monoid.
Let $F \subseteq E(S)$ be an ultrafilter and let $e \in F$.
Then there exists an element $a \in S$ such that
\begin{enumerate}

\item $a$ is an infinitesimal.

\item $a^{-1}a \in F$.

\item $a \in eSe$.

\end{enumerate}
\end{proposition}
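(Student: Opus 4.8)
The plan is to reduce the problem to producing a single element $a$ whose domain idempotent lies in $F$ and below one half of a splitting of $e$, while its range idempotent lies below the complementary half. Orthogonality of the two halves will then force $a^{2}=0$, and the containment of both idempotents below $e$ will give $a \in eSe$, so that all three conditions follow at once.

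First I would split $e$. Since $e \in F$ and $F$ is proper, $e \neq 0$; and since $E(S)$ is the Tarski algebra it is atomless, so $e$ is not an atom. Hence I may choose a non-zero idempotent $e_{1} < e$ and set $e_{2} = e \wedge \overline{e_{1}}$, obtaining an orthogonal decomposition $e = e_{1} \vee e_{2}$ with $e_{1}, e_{2}$ both non-zero. As $F$ is an ultrafilter of the Boolean algebra $E(S)$, it is prime by Lemma~\ref{lem:molina}, so from $e_{1} \vee e_{2} = e \in F$ one of $e_{1}, e_{2}$ lies in $F$; relabelling if necessary, I assume $e_{1} \in F$.

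Next I would use $0$-simplicity to connect $e_{1}$ to $e_{2}$. By Lemma~\ref{lem:toby} the relation $\equiv$ is universal on the non-zero idempotents, so in particular $e_{1} \preceq e_{2}$. Unpacking the definition, there is a pencil $\{x_{1}, \dots, x_{m}\}$ of non-zero elements with $e_{1} = \bigvee_{i=1}^{m} \mathbf{d}(x_{i})$ and $\mathbf{r}(x_{i}) \leq e_{2}$ for each $i$. Since $e_{1} \in F$ and $F$ is prime, iterating primeness over this finite join yields an index $k$ with $\mathbf{d}(x_{k}) \in F$, and I would set $a = x_{k}$. Then condition (2) is immediate since $a^{-1}a = \mathbf{d}(x_{k}) \in F$; moreover $\mathbf{d}(a) \leq e_{1}$ and $\mathbf{r}(a) \leq e_{2}$ force both idempotents below $e$, giving $a = eae \in eSe$ (condition (3)), while $e_{1} \perp e_{2}$ gives $\mathbf{d}(a) \perp \mathbf{r}(a)$, whence $a^{2} = 0$ by Lemma~\ref{lem:spooks}(2) (condition (1)). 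The element $a$ is non-zero because $\mathbf{d}(a) \in F$ is non-zero, so it is a genuine infinitesimal.

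The substantive step, and the one I expect to be the crux, is the passage from the $\vee$-ideal formulation of $0$-simplicity to an actual arrow between the two chosen pieces: it is Lemma~\ref{lem:toby} that converts $0$-simplifying into the combinatorial statement $e_{1} \preceq e_{2}$, and it is primeness of the ultrafilter that lets me single out the one pencil element whose domain survives inside $F$. Everything else is bookkeeping with the natural partial order, the orthogonal splitting of $e$, and the characterization of infinitesimals in Lemma~\ref{lem:spooks}.
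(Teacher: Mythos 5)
Your proposal is correct and follows essentially the same route as the paper: split $e$ orthogonally into two non-zero pieces using atomlessness, use primeness of the ultrafilter to locate the piece in $F$, apply the $0$-simplifying hypothesis (via the pencil relation $\preceq$) to get an element carrying a sub-idempotent of that piece onto the other, and pick out the pencil member whose domain lies in $F$. The only cosmetic difference is that you write the complementary piece as $e \wedge \overline{e_{1}}$ where the paper writes $\bar{f}$ (implicitly relative to $e$); the argument is otherwise identical.
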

\begin{proof}
The idempotent $e \neq 0$.
We are working in a Tarski algebra, and so $e$ cannot be an atom.
Thus there exists $0 \neq f < e$.
The idempotents form a Boolean algebra, and so $e = f \vee \bar{f}$ and $f \wedge \bar{f} = 0$.
Since $f \vee \bar{f} = e \in F$, and $F$ is an ultrafilter and so a prime filter, we know that $f \in F$ or $\bar{f} \in F$.
Without loss of generality, we may assume that $f \in F$.
Now $S$ is $0$-simplifying and so $\bar{f} \equiv f$.
In particular, $f \preceq \bar{f}$.
We may therefore find elements $x_{1}, \ldots, x_{m}$ 
such that
$f = \bigvee_{i=1}^{m} \mathbf{d}(x_{i})$ 
and $\mathbf{r}(x_{i}) \leq \bar{f}$.
We use the fact that $F$ is a prime filter, to deduce that $\mathbf{d}(x_{i}) \in F$ for some $i$.
Put $a = x_{i}$.
Then $a^{-1}a \leq f$ and $aa^{-1} \leq \bar{f}$.
Hence $a^{-1}a \perp aa^{-1}$.
It follows that $a$ is an infinitesimal.
Clearly,  $a^{-1}a,aa^{-1} \leq e$.
In addition, $a^{-1}a \in F$.
\end{proof}

The above proposition tells us that infinitesimals are plentiful in $0$-simplifying Tarski monoids.
As a result, by Lemma~\ref{lem:spooks} involutions are plentiful.

\begin{lemma}\label{lem:horsa} Let $S$ be a $0$-simplifying Tarski monoid. 
Let $e$ be any non-zero idempotent.
Then there exist infinitesimals $a,b \in eSe$ such that $ab$ is a restricted product and an infinitesimal.
\end{lemma}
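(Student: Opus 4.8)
The plan is to transcribe into the atomless setting the combinatorial picture from a finite symmetric inverse monoid: there one takes three distinct points $x,y,z$ and the infinitesimals $y\mapsto z$ and $x\mapsto y$, whose restricted product is the infinitesimal $x\mapsto z$. The ``points'' will be replaced by pairwise orthogonal non-zero idempotents, and the only genuine work is to arrange the domain idempotent of the first factor to coincide \emph{exactly} with the range idempotent of the second, so that their product really is a restricted product.

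First I would reserve room. Since $E(S)$ is a Tarski algebra it is atomless, so the non-zero idempotent $e$ is not an atom and can be written as an orthogonal join $e = P \vee Q \vee R$ of three pairwise orthogonal non-zero idempotents (split $e$ once, then split one of the two pieces again). Every element constructed below will have domain and range idempotents lying beneath $P$, $Q$ or $R$, and hence automatically lie in $eSe$, so no separate reduction to the local submonoid is needed.

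Next I would manufacture two raw infinitesimals using $0$-simplicity in the form of Lemma~\ref{lem:toby}: for any two non-zero idempotents $g,h$ we have $g \equiv h$, hence $g \preceq h$, so there is a pencil $x_{1},\dots,x_{m}$ with $g = \bigvee_{i}\mathbf{d}(x_{i})$ and $\mathbf{r}(x_{i}) \leq h$; as $g \neq 0$ some $x_{i}$ is non-zero, and by Lemma~\ref{lem:spooks}(2) any such $x_{i}$ is an infinitesimal with non-zero domain beneath $g$ and range beneath $h$. Applying this to $P \preceq Q$ gives an infinitesimal $u$ with $p := \mathbf{d}(u) \leq P$ non-zero and $\mathbf{r}(u) \leq Q$; applying it to $R \preceq p$ gives an infinitesimal $v$ with $\mathbf{d}(v) \leq R$ non-zero and $s := \mathbf{r}(v) \leq p$ non-zero.

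Finally I would glue the pieces. Set $a = us$, the restriction of $u$ to the subidempotent $s \leq \mathbf{d}(u)$, so that $\mathbf{d}(a) = s$ and $\mathbf{r}(a) = usu^{-1} \leq Q$, and set $b = v$. Since $\mathbf{d}(a) = s = \mathbf{r}(b)$, the product $ab$ is a restricted product, with $\mathbf{d}(ab) = \mathbf{d}(b) \leq R$ and $\mathbf{r}(ab) = \mathbf{r}(a) \leq Q$. The three infinitesimality claims are then pure orthogonality bookkeeping, each via Lemma~\ref{lem:spooks}(2): $\mathbf{d}(a) \leq P$ and $\mathbf{r}(a) \leq Q$ with $P \perp Q$ forces $\mathbf{d}(a) \perp \mathbf{r}(a)$, so $a^{2} = 0$; $\mathbf{d}(b) \leq R$ and $\mathbf{r}(b) = s \leq P$ with $R \perp P$ forces $b^{2} = 0$; and $\mathbf{d}(ab) \leq R$, $\mathbf{r}(ab) \leq Q$ with $R \perp Q$ forces $(ab)^{2} = 0$, with non-triviality throughout following from $s \neq 0$. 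The one delicate point, and the reason the construction is ordered as above, is the \emph{exact} equality $\mathbf{d}(a) = \mathbf{r}(b)$ demanded by a restricted product: because we cannot expect $P,Q,R$ to be $\mathscr{D}$-related, we first build $v$ so that its range $s$ lands inside $\mathbf{d}(u)$, and only afterwards cut $u$ down to $s$. I would then confirm by the standard restriction identities that $us$ has the asserted domain $s$ and range $usu^{-1}$.
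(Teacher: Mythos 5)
Your proof is correct and follows essentially the same route as the paper's: construct a first infinitesimal in $eSe$, construct a second whose range lies beneath the domain of the first, and restrict the first factor to that range so that the product is restricted, after which the three infinitesimality claims are the same orthogonality bookkeeping. The only cosmetic difference is that you extract the infinitesimals directly from the pencil relation of Lemma~\ref{lem:toby} via an explicit orthogonal splitting $e = P \vee Q \vee R$, where the paper instead invokes Proposition~\ref{prop:george} (which packages the same $\preceq$ argument) and places the second infinitesimal inside the local submonoid $\mathbf{d}(x)S\mathbf{d}(x)$.
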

\begin{proof} Every non-zero idempotent is an element of some ultrafilter in $E(S)$.
Thus by Proposition~\ref{prop:george}, we may find an infinitesimal $x \in eSe$.
Similarly, we may find an infinitesimal $b \in \mathbf{d}(x)S\mathbf{d}(x)$.
Put $a = x \mathbf{r}(b)$.
The set of infinitesimals forms an order ideal, and so $a$ is an infinitesimal.
By construction, $ab$ is a restricted product, and since $\mathbf{r}(a) \perp \mathbf{d}(b)$ it is an infinitesimal. 
\end{proof}

\begin{lemma}\label{lem:hengist} Let $S$ be a $0$-simplifying Tarski monoid. 
Every ultrafilter contains an infinitesimal or the product of two infinitesimals.
\end{lemma}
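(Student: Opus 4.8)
The plan is to split on whether the idempotent ultrafilters $\mathbf{d}(A)$ and $\mathbf{r}(A)$ coincide. Throughout I fix an element $a \in A$; recall from the construction of $\mathsf{G}(S)$ that $A = (a\,\mathbf{d}(A))^{\uparrow}$, where $\mathbf{d}(A) = (A^{-1}A)^{\uparrow}$ and $\mathbf{r}(A) = (AA^{-1})^{\uparrow}$. The one structural fact I would establish first and then reuse repeatedly is a \emph{conjugation correspondence}: for an idempotent $f \leq a^{-1}a$ one has $f \in \mathbf{d}(A)$ if and only if $afa^{-1} \in \mathbf{r}(A)$. One direction is immediate (if $f \in \mathbf{d}(A)$ then $af \in A$, so $afa^{-1} \in AA^{-1}$), and the other follows by applying the same reasoning to $A^{-1}$, using $a^{-1}(afa^{-1})a = f$. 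The first case, $\mathbf{d}(A) \neq \mathbf{r}(A)$, will yield an infinitesimal in $A$; the second, $\mathbf{d}(A) = \mathbf{r}(A)$, will yield a product of two infinitesimals.

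\emph{Separation step} (the case $\mathbf{d}(A) \neq \mathbf{r}(A)$). Since these are distinct ultrafilters in the Boolean algebra $E(S)$, some idempotent lies in one but not the other. Replacing $A$ by $A^{-1}$ if necessary — which swaps $\mathbf{d}$ and $\mathbf{r}$ and replaces the infinitesimal I find by its inverse, still an infinitesimal by Lemma~\ref{lem:spooks}(2) — I may assume there is an idempotent $c \in \mathbf{d}(A) \setminus \mathbf{r}(A)$, and after meeting with $a^{-1}a$ that $c \leq a^{-1}a$. Then $\bar{c} \in \mathbf{r}(A)$, so the conjugation correspondence gives $a^{-1}\bar{c}a \in \mathbf{d}(A)$, and I set $w = c \wedge a^{-1}\bar{c}a \in \mathbf{d}(A)$. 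A short calculation shows $\mathbf{d}(aw) = w \leq c$ and $\mathbf{r}(aw) = awa^{-1} \leq \bar{c}$, whence $aw$ is an infinitesimal by Lemma~\ref{lem:spooks}(2), and $aw \in A$ because $w \in \mathbf{d}(A)$. Notably this step uses only that $E(S)$ is a Boolean algebra, not $0$-simplicity or the Tarski hypothesis.

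\emph{The case} $\mathbf{d}(A) = \mathbf{r}(A) =: G$. Now no restriction of $a$ can be an infinitesimal, since its endpoints agree, so I would route through a second point. Put $e = a^{-1}a \in G$. By Proposition~\ref{prop:george} there is an infinitesimal $q \in eSe$ with $q^{-1}q \in G$; as $q$ is an infinitesimal, $qq^{-1} \perp q^{-1}q$, so $qq^{-1} \notin G$. Let $Q = (qG)^{\uparrow}$, $G_{1} = \mathbf{r}(Q)$, and $b = aq^{-1}$, with $b^{-1}b = qeq^{-1} = qq^{-1} \in G_{1}$. I take $B = (bG_{1})^{\uparrow}$ and check, via the conjugation correspondence, that $\mathbf{r}(B) = G$; since $qq^{-1} \in G_{1} \setminus G$ we have $\mathbf{d}(B) = G_{1} \neq \mathbf{r}(B)$. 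The separation step applied to $B$ (its distinguishing idempotent being $qq^{-1}$) produces $w \leq qq^{-1}$ in $G_{1}$ with $bw$ an infinitesimal. I then take the two infinitesimals to be $bw$ and $wq$ — the latter is an infinitesimal because $\mathbf{r}(wq) = w \leq qq^{-1}$ and $\mathbf{d}(wq) = q^{-1}wq \leq q^{-1}q$, which are orthogonal — and their product is $bwq = a(q^{-1}wq)$. Since $w \in G_{1}$, the conjugation correspondence for $q$ gives $q^{-1}wq \in G$, so $bwq \in A$ is the required product of two infinitesimals. (For $A$ itself idempotent this specialises correctly: $b = q^{-1}$ and $bwq = q^{-1}wq$ is a nonzero idempotent of $G$ realised as a product of two infinitesimals.)

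The hard part will be the bookkeeping in the second case: one must keep track of three idempotent ultrafilters ($G$, the intermediate $G_{1}$, and $G$ again) and invoke the conjugation correspondence each time to guarantee that the restrictions $aw$, $bw$, $wq$ land in the correct ultrafilters, so that the reassembled product genuinely lies in $A$. Pinning down the correspondence cleanly at the outset, and confirming that $G_{1} \neq G$ — which is precisely where the infinitesimal supplied by Proposition~\ref{prop:george}, and hence where $0$-simplicity together with the atomless (Tarski) idempotent algebra, enters — is what makes the argument close.
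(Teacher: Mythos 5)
Your proof is correct and follows essentially the same route as the paper: split on whether $\mathbf{d}(A)=\mathbf{r}(A)$, obtain an infinitesimal in the unequal case by squeezing an element of $A$ between separating orthogonal idempotents, and in the equal case use Proposition~\ref{prop:george} (i.e.\ $0$-simplifying plus atomlessness) to factor $A$ through an auxiliary ultrafilter based at a different identity and apply the first case twice. The only difference is presentational: you write down the two infinitesimals $bw$ and $wq$ and their product explicitly, where the paper argues at the level of the ultrafilter factorization $A=(A\cdot B)\cdot B^{-1}$.
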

\begin{proof} We may restrict our attention to non-idempotent ultrafilters $A$.

Suppose first that  $A$ is an ultrafilter such that $A^{-1} \cdot A \neq A \cdot A^{-1}$.
Both  $A^{-1} \cdot A$ and $A \cdot A^{-1}$ are idempotent ultrafilters and are distinct by assumption.
Since the groupoid $\mathsf{G}(S)$ is Hausdorff there are compact-open sets $V_{s}$ and $V_{t}$ such that
$A^{-1} \cdot A \in V_{s}$ and $A \cdot A^{-1} \in V_{t}$ where $s \wedge t = 0$.
We may find idempotents $e$ and $f$ such that $A^{-1} \cdot A \in V_{e}$ and $A \cdot A^{-1} \in V_{f}$  and $e \wedge f = 0$.
Let $a \in A$ and put $b = fae$.
Then $b \in A$ and $b^{2} = 0$.

We now consider the case where $A$ be an ultrafilter such that $A^{-1} \cdot A = A \cdot A^{-1} = F^{\uparrow}$ where $F \subseteq E(S)$ is an ultrafilter.
We shall prove that there is an ultrafilter $G \subseteq E(S)$ distinct from $F$ and an ultrafilter $B$ such that $F^{\uparrow} = B \cdot B^{-1}$ and $B^{-1} \cdot B = G^{\uparrow}$.
Then $B^{-1} \cdot B \neq B \cdot B^{-1}$ and $A = (A \cdot B) \cdot B^{-1}$.
By  the first case above, both $A \cdot B$ and $B^{-1}$ contain infinitesimals and so $A$ contains a product of infinitesimals.

Let $F \subseteq E(S)$ be an ultrafilter.
Let $e \in F$.
Using the fact that $E(S)$ is a Tarski algebra, we may write $e = e_{1} \vee e_{2}$ where $e_{1},e_{2} \neq 0$ and $e_{1} \perp e_{2}$.
Without loss of generality, we may assume that $e_{1} \in F$ and $e_{2} \notin F$.
We now relabel.
Let $e \in F$ and let $f \neq 0$ be such that $e \perp f$.
By assumption, $e \preceq f$.
Thus there are elements $x_{1}, \ldots, x_{m}$ such that
$e = \bigvee_{i=1}^{m} \mathbf{d}(x_{i})$ and $\mathbf{r}(x_{i}) \leq f$.
Since $F$ is an ultrafilter it is also a prime filter and so, relabelling if necessary, $\mathbf{d}(x_{1}) \in F$.
Consider the ultrafilter $C = (x_{1}F^{\uparrow})^{\uparrow}$.
Then $\mathbf{d}(C) = F^{\uparrow}$.
Put $G = E(\mathbf{r}(C))$.
Then $f \in G$.
It follows that $C \cdot C^{-1} \neq F^{\uparrow}$.
\end{proof}

The following is key.
It follows from Lemma~\ref{lem:hengist} and Lemma~\ref{lem:spooks}.

\begin{proposition}\label{prop:dinky}  Let $S$ be a $0$-simplifying Tarski monoid. 
Then every ultrafilter contains a unit.
\end{proposition}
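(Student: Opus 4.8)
The plan is to combine the dichotomy provided by Lemma~\ref{lem:hengist} with the fact, recorded in Lemma~\ref{lem:spooks}(3), that every infinitesimal lies beneath a unit. Fix an arbitrary ultrafilter $A$ of $S$. Since $A$ is in particular a filter it is closed upwards, and this is the property I shall exploit: to show that $A$ contains a unit it suffices to exhibit an element of $A$ that lies beneath some unit.

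First I would dispose of the easy case. By Lemma~\ref{lem:hengist}, $A$ either contains an infinitesimal $a$ or else contains a product $ab$ of two infinitesimals. If $a \in A$ is an infinitesimal, then by Lemma~\ref{lem:spooks}(3) the element $u = a^{-1} \vee a \vee e$, with $e = \overline{a^{-1}a}\,\overline{aa^{-1}}$, is an involution, hence a unit, and $a \leq u$. Upward closure of $A$ then gives $u \in A$, as required.

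For the remaining case, suppose $ab \in A$ with $a$ and $b$ infinitesimals. Applying Lemma~\ref{lem:spooks}(3) to each factor, I obtain units (indeed involutions) $u \geq a$ and $v \geq b$. Since the group of units is closed under products, $uv$ is again a unit. The key observation is that the natural partial order is compatible with multiplication, so from $a \leq u$ and $b \leq v$ I get $ab \leq uv$. As $ab \in A$ and $A$ is closed upwards, $uv \in A$, and $uv$ is a unit.

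The result therefore follows quite quickly once the supporting lemmas are in hand; the one point requiring care — and the step I would flag as the main obstacle — is the product case, where one must be sure that dominating each factor by a unit and then multiplying really does produce a \emph{unit} dominating the original product. This rests on two facts I would verify explicitly: that $U(S)$ is a group, so that $uv$ is a unit, and that the natural partial order respects multiplication, so that $ab \leq uv$. Neither is deep, but it is their conjunction with the upward closure of the ultrafilter that delivers the conclusion. Note also that the $0$-simplifying and Tarski hypotheses are not invoked directly here: they have already been spent inside Lemma~\ref{lem:hengist} to guarantee the supply of infinitesimals.
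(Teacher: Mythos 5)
Your argument is correct and is precisely the proof the paper intends: the paper simply states that Proposition~\ref{prop:dinky} "follows from Lemma~\ref{lem:hengist} and Lemma~\ref{lem:spooks}", and your write-up supplies exactly the missing details (dominating each infinitesimal by an involution via Lemma~\ref{lem:spooks}(3), using compatibility of the natural partial order with multiplication in the product case, and invoking upward closure of the ultrafilter). No gap.
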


We shall now transform the above result, which is essentially a property of the associated groupoid,
into a visible property of the inverse monoid.

An inverse monoid is said to be {\em factorizable} if every element lies beneath an element of the group of units.
Finite symmetric inverse monoids are factorizable.
Further discussion of the applications of this concept may be found in \cite{Lawson98}.
In this paper, we shall need a weaker notion.
A distributive inverse monoid $S$ is said to be {\em piecewise factorizable} if each element $s \in S$ may 
be written in the form $s = \bigvee_{i=1}^{m} s_{i}$ where for each $s_{i}$ there is a unit $g_{i}$ such that $s_{i} \leq g_{i}$.
This may be rewritten in the following, more striking form:
$$s = \bigvee_{i=1}^{m} g_{i}s^{-1}s.$$
In the factorizable case, we have that $s = gs^{-1}s$, which explains our choice of terminology.

\begin{lemma}\label{lem:welsh} Let $S$ be a Boolean inverse $\wedge$-monoid.
Then $S$ is piecewise factorizable if, and only if, each ultrafilter of $S$ contains a unit.
\end{lemma}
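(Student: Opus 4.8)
The plan is to translate the algebraic statement into a topological one about the groupoid $\mathsf{G}(S)$, exploiting the order-isomorphism $a \mapsto V_a$ packaged in Lemma~\ref{lem:pele}. Both directions reduce to moving between an element $a \in S$ and the basic open set $V_a$ of ultrafilters containing it.

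For the forward implication — piecewise factorizability implies that every ultrafilter contains a unit — I would take an arbitrary ultrafilter $A$ and pick any $a \in A$. Writing $a = \bigvee_{i=1}^{m} s_i$ with each $s_i$ beneath a unit $g_i$, an inductive application of Lemma~\ref{lem:pele}(4) gives $V_a = \bigcup_{i=1}^{m} V_{s_i}$. Since $A \in V_a$, we have $A \in V_{s_i}$ for some $i$, that is, $s_i \in A$; as $A$ is a filter and hence closed upwards, $s_i \leq g_i$ forces $g_i \in A$. Thus $A$ contains a unit. This direction is routine.

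The substance lies in the converse. Given $s \in S$, for each ultrafilter $A \in V_s$ the hypothesis supplies a unit $g_A \in A$; since $A$ is closed under meets, $s \wedge g_A \in A$, and this element satisfies $s \wedge g_A \leq s, g_A$, so it lies beneath the unit $g_A$ and $A \in V_{s \wedge g_A}$. Hence $\{ V_{s \wedge g_A} \colon A \in V_s \}$ is an open cover of $V_s$, each member contained in $V_s$ by Lemma~\ref{lem:pele}(1). Now the key move: $V_s$ is a compact-open set — this is exactly the content of the duality, under which $S \cong \mathsf{B}(\mathsf{G}(S))$ identifies $s$ with the compact-open local bisection $V_s$ — so I may extract a finite subcover $V_s = \bigcup_{i=1}^{m} V_{s \wedge g_{A_i}}$. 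Setting $s_i = s \wedge g_{A_i}$, all the $s_i$ lie below $s$ and are therefore pairwise compatible, so $\bigvee_{i=1}^{m} s_i$ exists; by Lemma~\ref{lem:pele}(4) its image is $\bigcup_i V_{s_i} = V_s$, and Lemma~\ref{lem:pele}(1) then forces $\bigvee_{i=1}^{m} s_i = s$. Since each $s_i \leq g_{A_i}$, this is precisely a piecewise factorization of $s$.

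The main obstacle is the compactness step: I need that $V_s$ is compact so that the cover by the $V_{s \wedge g_A}$ reduces to a \emph{finite} subcover, which is exactly the finiteness demanded by the definition of piecewise factorizable. This is the only place where the topology, rather than pure order theory, is used; everything else is bookkeeping with the order-isomorphism $a \mapsto V_a$.
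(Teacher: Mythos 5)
Your argument is correct and follows essentially the same route as the paper: the forward direction amounts to primeness of ultrafilters (which you phrase equivalently via $V_a = \bigcup_i V_{s_i}$ from Lemma~\ref{lem:pele}), and the converse is exactly the paper's covering argument, using compactness of $V_s$ to extract a finite subcover by the sets $V_{s \wedge g_A}$ and then pulling the resulting equality of open sets back to $s = \bigvee_i s_i$ via Lemma~\ref{lem:pele}. No gaps.
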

\begin{proof} Suppose first that $S$ is piecewise factorizable.
Let $A$ be any ultrafilter and choose $s \in A$.
Then by assumption we may write 
$s = \bigvee_{i=1}^{m} s_{i}$ where for each $s_{i}$ there is a unit $g_{i}$ such that $s_{i} \leq g_{i}$.
But every ultrafilter is prime and so $s_{i} \in A$ for some $i$.
It is now immediate that $g_{i} \in A$ and so each ultrafilter contains a unit.
To prove the converse, we assume that every ultrafilter contains a unit.
Let $s \in S$ be any non-zero element.
We shall write $V_{s}$ as a union of clopen sets.
Let $A \in V_{s}$.
Then there is some unit $g \in A$.
Thus $g \wedge s \in A$.
We may therefore write
$V_{s} = \bigcup V_{s_{i}}$ where the $s_{i}$ are those elements belonging to the elements of $V_{s}$ which are  beneath units.
By compactness, we may write $V_{s} = \bigcup_{i=1}^{m} V_{s_{i}}$ and the result follows by Lemma~\ref{lem:pele}.
\end{proof}

If we combine Proposition~\ref{prop:dinky} and Lemma~\ref{lem:welsh}, we obtain our first structural result about $0$-simplifying Tarski monoids.

\begin{proposition}\label{prop:dory}  
Every $0$-simplifying Tarski monoid is piecewise factorizable.
\end{proposition}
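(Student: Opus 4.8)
The plan is to deduce this immediately by chaining together the two results that precede it. Since a Tarski monoid is by definition a countable Boolean inverse $\wedge$-monoid, Lemma~\ref{lem:welsh} applies to $S$ and reduces the assertion of piecewise factorizability to the purely groupoid-theoretic statement that every ultrafilter of $S$ contains a unit. That statement is exactly Proposition~\ref{prop:dinky}, which has already been established for $0$-simplifying Tarski monoids.

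Concretely, I would first observe that $S$ satisfies the hypotheses of both earlier results simultaneously: it is a Boolean inverse $\wedge$-monoid, as required by Lemma~\ref{lem:welsh}, and it is a $0$-simplifying Tarski monoid, as required by Proposition~\ref{prop:dinky}. I would then invoke Proposition~\ref{prop:dinky} to conclude that each ultrafilter $A$ of $S$ contains a unit, and finally feed this into the direction of Lemma~\ref{lem:welsh} asserting that the ultrafilter--unit condition is sufficient, thereby obtaining that $S$ is piecewise factorizable.

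There is no genuine obstacle remaining at this stage, since the substantive work has all been carried out earlier. The construction of units inside ultrafilters---the heart of the argument---rests on Proposition~\ref{prop:george} and Lemma~\ref{lem:hengist}, which exploit the atomless structure of the Tarski algebra together with $0$-simplicity to manufacture infinitesimals (and, via the Hausdorff separation of the associated groupoid, products of infinitesimals), and then on Lemma~\ref{lem:spooks} to lift these to units lying above them. The passage from the groupoid side back to a visible property of the monoid is handled once and for all in Lemma~\ref{lem:welsh}, whose nontrivial content uses compactness of the structure space to extract a finite join. Thus the only thing to record here is the synthesis, and I expect the resulting proof to occupy essentially a single sentence.
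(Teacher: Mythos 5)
Your proof is correct and is exactly the paper's argument: the proposition is obtained by combining Proposition~\ref{prop:dinky} (every ultrafilter of a $0$-simplifying Tarski monoid contains a unit) with the sufficiency direction of Lemma~\ref{lem:welsh}. Nothing further is needed.
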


This result hints at the very close connection between the structure of such monoids and the structure of their groups of units.

Let $G$ be a group.
We denote by $\mathsf{K}(G)$ the set of cosets of subgroups of $G$.
We refer to the elements of $\mathsf{K}(G)$ simply as {\em cosets}.
It is well-known that these are precisely the non-empty subsets $A\subseteq G$ such that $A = AA^{-1}A$.
In fact,  $\mathsf{K}(G)$ is an inverse monoid, but we shall only treat it as a groupoid.
The groupoid product is defined by $A \cdot B$ is defined if $A^{-1}A = BB^{-1}$ in which case $A \cdot B = AB$.
The identities of this groupoid are the subgroups of $G$.

\begin{lemma}\label{lem:sunshine} Let $S$ be a Boolean inverse $\wedge$-monoid with group of units $G$.
Let $A$ be an ultrafilter in $S$. 
If $A \cap G$ is non-empty then it is a coset.
\end{lemma}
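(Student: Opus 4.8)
The plan is to exploit the left action of the group of units $G = U(S)$ on the set of ultrafilters of $S$ and to identify $A \cap G$ as a single coset of a stabilizing subgroup. Recall from the discussion preceding the statement that a nonempty subset $B \subseteq G$ is a coset precisely when $B = BB^{-1}B$, so once the action is in place it suffices to verify this closure condition. First I would observe that for each unit $g$ the left translation $L_{g} \colon s \mapsto gs$ is an order-automorphism of $S$: it is a bijection with inverse $L_{g^{-1}}$, it preserves the natural partial order in both directions (since $gs \leq gt \iff s \leq t$ using $g^{-1}g = 1$), it fixes $0$, and, as $S$ is a $\wedge$-monoid, it preserves all binary meets. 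Hence $L_{g}$ carries ultrafilters to ultrafilters, so $g \cdot A := gA = \{gs \colon s \in A\}$ is a well-defined left action of $G$ on the set of ultrafilters.

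The key step is to show that if $g \in A \cap G$, then $g^{-1}A = \mathbf{d}(A)$, equivalently $A = g\,\mathbf{d}(A)$. For the inclusion $g^{-1}A \subseteq \mathbf{d}(A)$, note that for any $a \in A$ we have $g^{-1}a \in A^{-1}A \subseteq (A^{-1}A)^{\uparrow} = \mathbf{d}(A)$, since $g \in A$. As $g^{-1}A$ is an ultrafilter (the image of $A$ under the automorphism $L_{g^{-1}}$) contained in the proper filter $\mathbf{d}(A)$, maximality of ultrafilters forces equality. Writing $\mathbf{d}(A) = F^{\uparrow}$ and using $1 \in F^{\uparrow}$, this identity yields $A \cap G = \{g \in G \colon g\,\mathbf{d}(A) = A\}$: the forward inclusion is the key step, while the reverse follows because $g\,\mathbf{d}(A) = A$ gives $g = g\cdot 1 \in g\,\mathbf{d}(A) = A$.

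Finally I would verify that $H := A \cap G$ satisfies $H = HH^{-1}H$. The inclusion $H \subseteq HH^{-1}H$ is immediate from $h = hh^{-1}h$. For the reverse, take $g,h,k \in H$; then $gh^{-1}k \in G$, and using the action axioms together with $k\,\mathbf{d}(A) = A = h\,\mathbf{d}(A)$ we compute $(gh^{-1}k)\,\mathbf{d}(A) = gh^{-1}\!\cdot(k\,\mathbf{d}(A)) = gh^{-1}\!\cdot(h\,\mathbf{d}(A)) = g\,\mathbf{d}(A) = A$, so $gh^{-1}k \in H$. Thus $H$ is a coset. The only genuine obstacle is the key step $g^{-1}A = \mathbf{d}(A)$; everything else is a routine check that $L_{g}$ respects the filter structure. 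The one point requiring care is the maximality argument: I must confirm that $g^{-1}A$ is genuinely an ultrafilter, not merely a proper filter, so that containment in $\mathbf{d}(A)$ upgrades to equality.
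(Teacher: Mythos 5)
Your proof is correct, but it takes a genuinely different route from the paper's. The paper argues directly inside the filter: for $g,h,k \in A \cap G$ one picks $a,b,c \in A$ below them, notes $ab^{-1}c \in A$ (because $A$ is an ultrafilter, so $AA^{-1}A \subseteq A$ --- concretely, $a \wedge b \wedge c \in A$ lies below $ab^{-1}c$), and concludes $gh^{-1}k \in A$ by upward closure; that is the whole proof. You instead set up the left action of $U(S)$ on ultrafilters via the order-automorphisms $L_{g}$, prove the key identity $g^{-1}A = \mathbf{d}(A)$ for $g \in A \cap G$, and deduce that $A \cap G$ is exactly the transporter $\{g \in G \colon g\,\mathbf{d}(A) = A\}$, after which $H = HH^{-1}H$ is pure group theory. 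Each step of yours is sound: $L_{g}$ is an order-automorphism, hence carries ultrafilters to ultrafilters; the containment of the ultrafilter $g^{-1}A$ in the proper filter $\mathbf{d}(A)$ does upgrade to equality by maximality; and $1 \in \mathbf{d}(A)$ since $\mathbf{d}(A)$ is upward closed. The paper's argument is shorter and needs only that $A$ is an upward-closed set with $AA^{-1}A \subseteq A$; yours is longer but buys more, since it identifies $A \cap G$ as a coset of the specific subgroup $\mathbf{d}(A) \cap G$ and essentially anticipates the covering-functor statement of Proposition~\ref{prop: covering_functor}, where exactly this transporter description is what is needed.
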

\begin{proof}
Put $X = A \cap G$.
The claim is proved once we have shown that $X = XX^{-1}X$
Only one inclusion needs proving.
Let $g,h,k \in X$.
Then $a \leq g$, $b \leq h$ and $c \leq k$ where $a,b,c \in A$.
Then $ab^{-1}c \leq gh^{-1}k$ and $ab^{-1}c \in A$.
The result follows.
\end{proof}

Our next result shows that viewed as a discrete groupoid, 
$\mathsf{G}(S)$ has a local structure very similar to that of the groupoid $\mathsf{K}(G)$.

\begin{proposition}\label{prop: covering_functor} 
Let $S$ be a piecewise factorizable Boolean inverse $\wedge$-monoid with group of units $G$.
Define $\gamma \colon \: \mathsf{G}(S) \rightarrow \mathsf{K}(G)$ by $A \mapsto A \cap G$.
Then $\gamma$ is a covering functor.
\end{proposition}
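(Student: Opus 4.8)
The plan is to verify in turn that $\gamma$ is well defined, that it is a functor, and finally that it has the unique star-lifting property making it a covering functor. Well-definedness is immediate from what has already been assembled: since $S$ is piecewise factorizable, Lemma~\ref{lem:welsh} guarantees that every ultrafilter $A$ meets the group of units, so $\gamma(A)=A\cap G$ is non-empty, and Lemma~\ref{lem:sunshine} then shows it is a coset. Thus $\gamma$ does land in $\mathsf{K}(G)$.

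The engine of the whole argument is a representation lemma: if $A$ is an ultrafilter with $\mathbf{d}(A)=E_{0}$ (an idempotent ultrafilter) and $g\in A\cap G$, then
$$A=(gE_{0})^{\uparrow}\qquad\text{and}\qquad A\cap G=g\,(E_{0}\cap G).$$
The first equality asserts that an ultrafilter is recovered from its domain together with any one of its members. I would prove it by noting that $(gE_{0})^{\uparrow}$ is a genuine ultrafilter (here $g^{-1}g=1\in E_{0}$, so the construction recalled just before Lemma~\ref{lem:pele} applies) and then checking $(gE_{0})^{\uparrow}\subseteq A$ via the ultrafilter criterion of Lemma~\ref{lem:exel}: for an idempotent $e\in E_{0}$ and any $x\in A$, the element $g(e\wedge g^{-1}x)$ is a non-zero lower bound of $ge$ and $x$ (note $g^{-1}x\in A^{-1}A\subseteq E_{0}$, and left multiplication by the unit $g$ preserves non-zeroness), so $ge\wedge A\neq 0$ and hence $ge\in A$; maximality upgrades the inclusion to equality. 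Granting this, the coset identity is a short calculation: $gh\in gE_{0}\subseteq A$ for $h\in E_{0}\cap G$ gives $\supseteq$, while for $k\in A\cap G$ one has $k\geq ge$ with $e\in E_{0}$, whence the unit $g^{-1}k\geq e$ lies in the filter $E_{0}$, giving $k\in g(E_{0}\cap G)$. This lemma is where the interaction between the filter order and the group multiplication is concentrated, and I expect it to be the main obstacle.

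Functoriality then follows cleanly. Writing $H_{0}=\gamma(E_{0})=E_{0}\cap G$, which contains $1$ and is therefore a subgroup, the coset identity gives $\gamma(A)=gH_{0}$ and hence $\mathbf{d}(\gamma(A))=(gH_{0})^{-1}(gH_{0})=H_{0}=\gamma(\mathbf{d}(A))$; the range statement is symmetric, so $\gamma$ sends idempotent ultrafilters to subgroups and respects composability. For a composable pair with $\mathbf{d}(A)=\mathbf{r}(B)=E_{0}$, choosing units $g\in A\cap G$ and $h\in B\cap G$ gives $gh\in(AB)^{\uparrow}=A\cdot B$, so $\gamma(A\cdot B)=(gh)\,H_{1}$ where $H_{1}=\gamma(\mathbf{d}(B))$. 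Comparing with $\gamma(A)\gamma(B)=(gH_{0})(hH_{1})$ and using $H_{0}=\mathbf{r}(\gamma(B))=hH_{1}h^{-1}$ collapses the latter to $ghH_{1}$, so $\gamma(A\cdot B)=\gamma(A)\gamma(B)$.

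Finally I verify the star bijection over a fixed object $E_{0}$, with $H_{0}=\gamma(E_{0})$; since $\gamma$ is a functor between groupoids it suffices to treat the domain-stars, as applying $\gamma$ to inverses handles the range-stars. The cosets $C$ with $\mathbf{d}(C)=H_{0}$ are exactly the left cosets $gH_{0}$. For surjectivity, given $gH_{0}$ set $A=(gE_{0})^{\uparrow}$, a well-defined ultrafilter with $\mathbf{d}(A)=E_{0}$, and the representation lemma yields $\gamma(A)=gH_{0}$. For injectivity, if $\mathbf{d}(A)=\mathbf{d}(A')=E_{0}$ and $\gamma(A)=\gamma(A')$, pick a unit $g$ in the common coset; then $g\in A\cap A'$ and the first equality of the representation lemma forces $A=(gE_{0})^{\uparrow}=A'$. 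Hence $\gamma$ restricts to a bijection on each star, so it is a covering functor.
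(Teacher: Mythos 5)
Your proof is correct and follows essentially the same route as the paper: well-definedness from Lemma~\ref{lem:welsh} and Lemma~\ref{lem:sunshine}, functoriality via the factorization of elements of $A\cap G$ through a chosen unit (your coset identity $A\cap G=g(\mathbf{d}(A)\cap G)$ is exactly the paper's manipulations $a=(ag^{-1})g$ and $g=(gh^{-1})h$ packaged as a lemma), and star-surjectivity via $(gE_{0})^{\uparrow}$. The one genuine improvement is star-injectivity: the paper cites Lemma~2.11 of \cite{Lawson10b} for the fact that two ultrafilters with the same domain and a common unit coincide, whereas your representation lemma $A=(g\,\mathbf{d}(A))^{\uparrow}$ proves this internally, making the argument self-contained.
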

\begin{proof} The map is well-defined by Lemma~\ref{lem:sunshine}.
Let $F$ be an identity in $\mathsf{G}(S)$.
Then $F$ is an inverse subsemigroup of $S$.
It follows that $F \cap G$ is a subgroup and so $\gamma (F)$ is an identity of $\mathsf{K}(G)$.
Thus $\gamma$ maps identities to identities.
Also, $\gamma (A^{-1}) = \gamma (A)^{-1}$.
It is immediate that $\gamma (A)^{-1} \cdot \gamma (A) \subseteq \gamma (A^{-1} \cdot A)$.
To prove the reverse inclusion, let $a \in \gamma (A^{-1} \cdot A)$.
Then $a \in A^{-1} \cdot A \cap G$.
It follows that there is an idempotent $e \in A^{-1} \cdot A$ such that $e \leq a$.
Let $g \in A \cap G$ be arbitrary.
Then $a = (ag^{-1})g$ where $ag^{-1} \in A^{-1}$.
 It follows that $a \in \gamma (A)^{-1} \cdot \gamma (A)$, as required.
Suppose now that $A \cdot B$ is defined.
Then, by the above, $\gamma (A) \cdot \gamma (B)$ is defined.
It is immediate that $\gamma (A) \cdot \gamma (B) \subseteq \gamma (A \cdot B)$.
Let $g \in \gamma (A \cdot B)$.
Then $g \in A \cdot B \cap G$.
Let $h \in b \cap G$ be arbitrary.
Then $gh^{-1} \in A \cap G$.
It follows that $g = (gh^{-1})h \in \gamma (A) \gamma (B)$, as required.
We have therefore shown that $\gamma$ is a functor.

By Lemma~2.11 of \cite{Lawson10b}, we have the following result.
Let $A$ and $B$ be two ultrafilters in an inverse monoid $S$ with group of units $G$  
such that $A^{-1} \cdot A = B^{-1} \cdot B$ and $A \cap G \cap B \neq \emptyset$.
Then $A = B$.
It follows from this that  the functor $\gamma$ is star injective.

It remains to show that $\gamma$ is star surjective.
Let $F$ be an idempotent ultrafilter in $\mathsf{G}(S)$.
Let $A \in \mathsf{K}(G)$ be a coset such that $A^{-1} \cdot A = \gamma (F)$.
Let $g \in A$ be arbitrary.
Then $B = (gF)^{\uparrow}$ is an ultrafilter in $S$ such that $B^{-1} \cdot B = F$ and $\gamma (B) = A$.
 \end{proof}

\subsection{The fundamental case}

We shall now define an important representation of Boolean inverse $\wedge$-monoids.
It is based on the following observation which is a well-known construction in the theory of groupoids.
Let $A \subseteq G$ be a local bisection.
Then we may define a bijection $A^{-1}A \rightarrow AA^{-1}$ by $a^{-1}a \mapsto aa^{-1}$ where $a \in A$.
Let $S$ be a Boolean inverse $\wedge$-monoid.
We shall define a homomorphism $\theta \colon S \rightarrow I(\mathsf{X}(S))$.
For each $s \in S$, the set $V_{s}$ is a compact-open local bisection of $\mathsf{G}(S)$.
The set $V_{\mathbf{d}(s)}$ consists of idempotent ultrafilters.
If $A$ is an idempotent ultrafilter then it is equal to $E(A)^{\uparrow}$ where $E(A)$ is an ultrafilter
in the Boolean algebra $E(S)$.
We define $\theta_{s} \colon U_{s^{-1}s} \rightarrow U_{ss^{-1}}$ as follows.
Let $F \in U_{s^{-1}s}$.
Then $A = (sF)^{\uparrow}$ is an ultrafilter containing $s$  with $\mathbf{d}(A) = F^{\uparrow}$.
We define
$$\theta_{s}(F) = E(\mathbf{r}(A)).$$
It is easy to check that this is well-defined and a homeomorphism.
It may be directly defined as
$$\theta_{s}(F) = E((sFs^{-1})^{\uparrow}) $$
whenever $F \in U_{s^{-1}s}$.

\begin{lemma}\label{lem:bingo} Let $S$ be a Boolean inverse $\wedge$-monoid.
\begin{enumerate}

\item $\theta_{s} = \theta_{t}$ if, and only if, $s\,  \mu \, t$.

\item If $g$ and $h$ are units, then $\theta_{g} = \theta_{h}$ if, and only if, $gFg^{-1} = hFh^{-1}$ for all
ultrafilters $F \subseteq E(S)$.

\end{enumerate}
\end{lemma}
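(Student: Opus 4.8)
The plan is to reduce both statements to a single membership criterion that translates the condition $\theta_s = \theta_t$ (equality of partial homeomorphisms of $\mathsf{X}(S)$) into the algebraic conjugation condition defining $\mu$. Throughout I use that ultrafilters of the Boolean algebra $E(S)$ separate idempotents, which is Lemma~\ref{lem:pele}(1) restricted to $E(S)$: for idempotents $u,v$ one has $u \leq v$ iff every ultrafilter of $E(S)$ containing $u$ also contains $v$, and hence $u = v$ iff $U_u = U_v$.

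The key lemma I would establish first is that, for $F \in U_{s^{-1}s}$ and any idempotent $e \leq s^{-1}s$,
$$ses^{-1} \in \theta_s(F) \iff e \in F.$$
To prove it, recall $\theta_s(F) = E((sFs^{-1})^{\uparrow})$, so $ses^{-1} \in \theta_s(F)$ means $ses^{-1} \geq sfs^{-1}$ for some $f \in F$; replacing $f$ by $f \wedge s^{-1}s \in F$ we may assume $f \leq s^{-1}s$. Since $e \mapsto ses^{-1}$ is an order isomorphism of $[0,s^{-1}s]$ onto $[0,ss^{-1}]$ with inverse $v \mapsto s^{-1}vs$, the inequality $ses^{-1} \geq sfs^{-1}$ is equivalent to $e \geq f$, whence $e \in F$ by upward closure; the converse is immediate on taking $f = e$.

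For part (1), equality of $\theta_s$ and $\theta_t$ as partial maps forces equality of domains and images, so $U_{s^{-1}s} = U_{t^{-1}t}$ and $U_{ss^{-1}} = U_{tt^{-1}}$, giving $s^{-1}s = t^{-1}t$ and $ss^{-1} = tt^{-1}$ by Lemma~\ref{lem:pele}(1). Writing $d = s^{-1}s = t^{-1}t$, the membership lemma gives, for every $e \leq d$ and every $F \in U_d$, the chain $ses^{-1} \in \theta_s(F) \iff e \in F \iff tet^{-1} \in \theta_t(F)$. When $\theta_s = \theta_t$, the idempotents $ses^{-1}$ and $tet^{-1}$ lie in exactly the same ultrafilters, hence are equal for all $e \leq d$; by the reduction recorded before Lemma~\ref{lem:helen} this yields $s \, \mu \, t$. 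Conversely, if $s\,\mu\,t$ then the two maps share domain and codomain, and every idempotent $v \leq ss^{-1}=tt^{-1}$ has the form $v = ses^{-1} = tet^{-1}$ with $e = s^{-1}vs = t^{-1}vt \leq d$, so the membership lemma shows $\theta_s(F)$ and $\theta_t(F)$ contain the same idempotents below $ss^{-1}$; as an ultrafilter containing $ss^{-1}$ is determined by its trace on $[0,ss^{-1}]$, we conclude $\theta_s(F) = \theta_t(F)$.

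Part (2) is then almost immediate: for a unit $g$ the map $e \mapsto geg^{-1}$ is an automorphism of the Boolean algebra $E(S)$, so $gFg^{-1}$ is already an ultrafilter of $E(S)$ and $\theta_g(F) = E((gFg^{-1})^{\uparrow}) = gFg^{-1}$; hence $\theta_g = \theta_h$ holds iff $gFg^{-1} = hFh^{-1}$ for every ultrafilter $F \subseteq E(S)$. I expect the main obstacle to be the membership lemma together with the two bookkeeping points it feeds into, namely that the order isomorphism $e \mapsto ses^{-1}$ lets one pass freely between idempotents below $s^{-1}s$ and below $ss^{-1}$, and that an ultrafilter containing $ss^{-1}$ is pinned down by which idempotents below $ss^{-1}$ it contains. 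Everything else is formal once these are in hand.
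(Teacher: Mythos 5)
Your proposal is correct and follows essentially the same route as the paper: the "membership lemma" you isolate is exactly the paper's step "the image of $U_{e}$ under $\theta_{s}$ is $U_{ses^{-1}}$", after which both arguments conclude by separating idempotents with ultrafilters and invoking the reduction of $\mu$ to idempotents below $s^{-1}s$. You merely supply more detail (the order-isomorphism argument and the bookkeeping about domains) where the paper says "now verify" and "immediate".
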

\begin{proof} (1) It is immediate that if $s\,  \mu \, t$ then  $\theta_{s} = \theta_{t}$.
We prove the converse.
Suppose that  $\theta_{s} = \theta_{t}$.
Let $e \leq s^{-1}s$.
Then $U_{e} \subseteq U_{s^{-1}s}$.
Now verify that the image of $U_{e}$ under $\theta_{s}$ is $U_{ses^{-1}}$.
By a similar argument, we have that  the image of $U_{e}$ under $\theta_{t}$ is $U_{tet^{-1}}$.
Thus $U_{ses^{-1}} = U_{tet^{-1}}$.
But in a Boolean algebra, two elements are equal if, and only if, they are contained in the same sets of ultrafilters.
It follows that $ses^{-1} = tet^{-1}$.
We have therefore shown that $s \, \mu \,t$.

(2) This is immediate from the definitions.

\end{proof}

Being fundamental is a standard property of inverse semigroups.
We now link this to properties of the associated Boolean groupoid. 

Let $G$ be a groupoid.
The union of the local groups of $G$, denoted by $\mbox{Iso}(G)$, is a subgroupoid,
called the {\em isotropy subgroupoid} of $G$.
If $e \in G_{o}$ is such that the local group at $e$ is trivial we say that $e$ is {\em aperiodic}.
The groupoid $G$ is said to be {\em principal} if every identity is aperiodic.
That is, $\mbox{Iso}(G) = G_{o}$.
Equivalently, $G$ is an equivalence relation.
In the case where $G$ is a topological groupoid,
we say that $G$ is {\em essentially principal} if the interior of  $\mbox{Iso}(G)$ is just $G_{o}$.

\begin{proposition}\label{prop:one} Let $S$ be a Boolean inverse $\wedge$-monoid.
Then its associated Boolean groupoid $\mathsf{G}(S)$ is essentially principal
if and only if $S$ is fundamental.
\end{proposition}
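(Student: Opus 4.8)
The plan is to route everything through the representation $\theta \colon S \rightarrow I(\mathsf{X}(S))$ of Subsection~3.3 together with Lemma~\ref{lem:bingo}, which already identifies fundamentality of $S$ with injectivity of the assignment $s \mapsto \theta_{s}$. First I would record the two standing topological facts about $\mathsf{G}(S)$: the unit space $\mathsf{G}(S)_{o}$ is open (it is $\bigcup_{e \in E(S)} V_{e}$, and each $V_{e}$ consists of idempotent ultrafilters by Lemma~\ref{lem:pele}(5)) and it is contained in $\mbox{Iso}(\mathsf{G}(S))$. Hence $\mathsf{G}(S)_{o} \subseteq \mbox{int}(\mbox{Iso}(\mathsf{G}(S)))$ automatically, so essential principality is exactly the reverse inclusion $\mbox{int}(\mbox{Iso}(\mathsf{G}(S))) \subseteq \mathsf{G}(S)_{o}$. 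Since the $V_{a}$ form a basis and, again by Lemma~\ref{lem:pele}(5), $V_{a} \subseteq \mathsf{G}(S)_{o}$ holds iff $a \in E(S)$, this inclusion is equivalent to the purely algebraic condition: for every $s \in S$, if $V_{s} \subseteq \mbox{Iso}(\mathsf{G}(S))$ then $s$ is an idempotent.

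The central step is to translate $V_{s} \subseteq \mbox{Iso}(\mathsf{G}(S))$ into the language of $\mu$. Every ultrafilter containing $s$ has the form $A = (sF^{\uparrow})^{\uparrow}$ for a unique ultrafilter $F \subseteq E(S)$ with $s^{-1}s \in F$, with $\mathbf{d}(A) = F^{\uparrow}$ and $E(\mathbf{r}(A)) = \theta_{s}(F)$. Such an $A$ lies in the isotropy subgroupoid precisely when $\mathbf{d}(A) = \mathbf{r}(A)$, that is, when $\theta_{s}(F) = F$. Thus $V_{s} \subseteq \mbox{Iso}(\mathsf{G}(S))$ holds iff $\theta_{s}$ fixes every point of $U_{s^{-1}s}$. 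As $\theta_{s}$ is a bijection from $U_{s^{-1}s}$ onto $U_{ss^{-1}}$, fixing every point forces $U_{s^{-1}s} = U_{ss^{-1}}$, hence $s^{-1}s = ss^{-1} =: e$, and then $\theta_{s} = \theta_{e}$. By Lemma~\ref{lem:bingo}(1) this says exactly $s \, \mu \, e$. Combining with the previous paragraph, $\mathsf{G}(S)$ is essentially principal iff every $s$ with $s^{-1}s = ss^{-1} = e$ and $s \, \mu \, e$ is idempotent, i.e. equals $e$.

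One direction is then immediate: if $S$ is fundamental then $\mu$ is equality, so $s \, \mu \, e$ forces $s = e$ and the displayed condition holds, whence $\mathsf{G}(S)$ is essentially principal. For the converse I would assume the condition and take any pair $s \, \mu \, t$, aiming to conclude $s = t$. Writing $e = s^{-1}s = t^{-1}t$ and $f = ss^{-1} = tt^{-1}$, I pass to the local-group element $u = st^{-1}$ and check $u^{-1}u = uu^{-1} = f$. The crux is to verify $u \, \mu \, f$: for an idempotent $g \leq f$ one has $u g u^{-1} = s(t^{-1}gt)s^{-1}$, and $h := t^{-1}gt \leq e$, so applying $s \, \mu \, t$ to $h$ gives $uhu^{-1} = s h s^{-1} = t h t^{-1} = f g f = g$. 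Hence $u \, \mu \, f$, and the condition forces $u = f$, that is $st^{-1} = tt^{-1}$; multiplying on the right by $t$ yields $s = t$. Thus $\mu$ is trivial and $S$ is fundamental.

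The routine ingredients are the topological bookkeeping of the first paragraph and the commutator-style computation of the last. The step I expect to demand the most care is the middle one: correctly matching the topological isotropy condition $\mathbf{d}(A) = \mathbf{r}(A)$ against the algebraic relation $\mu$ through $\theta$, and in particular justifying that $V_{s} \subseteq \mbox{Iso}(\mathsf{G}(S))$ already forces $s^{-1}s = ss^{-1}$ rather than merely that these two idempotents sit in a common ultrafilter. Once that identification is secure, the reduction of an arbitrary $\mu$-related pair to the single local-group element $u = st^{-1}$ finishes the argument.
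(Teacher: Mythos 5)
Your argument is correct, but it takes a genuinely different route from the paper's. Both proofs begin with the same reduction (essential principality is equivalent to: $V_{s} \subseteq \mbox{Iso}(\mathsf{G}(S))$ forces $s \in E(S)$), but the paper then works with the centralizer characterization of fundamentality, $E(S) = Z(E(S))$: in one direction it takes $a \in Z(E(S))$ and shows by a direct computation with $A^{-1}\cdot A$ and $A \cdot A^{-1}$ that $V_{a} \subseteq \mbox{Iso}(\mathsf{G}(S))$, which forces $a$ to be idempotent; in the other it takes $V_{a} \subseteq \mbox{Iso}(\mathsf{G}(S))$ and compares $V_{ea}$ with $V_{ae}$ to conclude that $a$ commutes with every idempotent. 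You instead route everything through the representation $\theta$ and the congruence $\mu$: the identification of $V_{s} \subseteq \mbox{Iso}(\mathsf{G}(S))$ with ``$s^{-1}s = ss^{-1} = e$ and $s \, \mu \, e$'' via Lemma~\ref{lem:bingo}, followed by the reduction of an arbitrary pair $s \, \mu \, t$ to the local-group element $u = st^{-1}$, replaces the paper's hands-on ultrafilter manipulations. Your version buys a cleaner conceptual statement (isotropy of $V_{s}$ is exactly $\mu$-degeneracy on elements with equal domain and range idempotents) and reuses machinery already established; the paper's version is independent of $\theta$ and connects directly to the centralizer definition of fundamental given in the introduction. All the steps you flag as delicate do go through: $\theta_{s}$ fixing $U_{s^{-1}s}$ pointwise really does force $U_{s^{-1}s} = U_{ss^{-1}}$ because $\theta_{s}$ is a bijection onto $U_{ss^{-1}}$ and two idempotents lying in the same ultrafilters are equal. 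The only blemishes are cosmetic: the slip $uhu^{-1}$ for $ugu^{-1}$ in the last computation, and you might note in passing that $s = 0$ is harmless since $V_{0} = \emptyset$ and $0$ is idempotent.
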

\begin{proof} Suppose first that $\mathsf{G}(S)$ is essentially principal.
Let $a \in Z(E(S))$.
We need to prove that $a \in E(S)$.
Let $A \in V_{a}$.
We prove first that $A^{-1} \cdot A = A \cdot A^{-1}$.
We have that $A = (aA^{-1} \cdot A)^{\uparrow}$.
Now $A \cdot A^{-1} = (a A^{-1}A a^{-1})^{\uparrow}$.
Let $x \in  A \cdot A^{-1}$.
Then $aea^{-1} \leq x$ for some idempotent $e \in A^{-1} \cdot A$.
But by assumption, $a$ commutes with all idempotents.
Thus $aa^{-1}e \leq x$ and $aa^{-1} = a^{-1}a$.
Hence $a^{-1}ae \leq x$.
But $a^{-1}a,e \in A^{-1} \cdot A$ and so $a^{-1}ae \in A^{-1} \cdot A$.
It follows that $x \in A^{-1} \cdot A$.
We have therefore proved that $A \cdot A^{-1} \subseteq A^{-1} \cdot A$.
Now let $x \in   A^{-1} \cdot A$.
Then $e \leq x$ where $e \in A^{-1} \cdot A$ is an idempotent.
Clearly, $ea^{-1}a \leq x$ since $a^{-1}a \in A^{-1} \cdot A$.
But $ea^{-1}a = eaa^{-1} = aea^{-1}$.
It follows that $x \in A \cdot A^{-1}$.
We have therefore proved that $A \in \mbox{Iso}(\mathsf{G}(S))$.
But $A$ was arbitrary,
so we have proved that $V_{a} \subseteq \mbox{Iso}(\mathsf{G}(S))$.
Thus $V_{a}$ is contained in the interior of the isotropy subgroupoid.
Hence $V_{a} \subseteq \mathsf{G}(S)_{o}$.
It follows that every ultrafilter containing $a$ is an idempotent ultrafilter
which implies that $a$ is an idempotent, as required.

Conversely, assume that $S$ is fundamental.
Let $V_{a} \subseteq \mbox{Iso}(G)$.
We shall prove that $a \in Z(E(S))$.
Let $e \in E(S)$ be an arbitrary idempotent.
We claim that $V_{ea} = V_{ae}$.
Granting this we get that $ae = ea$.
We now use the fact that $S$ is fundamental to deduce that $a$ is an idempotent.
It follows that every ultrafilter in $V_{a}$ is idempotent.
Thus the interior of the isotropy groupoid is the space of identities.

It only remains, therefore, to prove the claim.
Let $A \in V_{ea}$.
Then $ea \leq a$ and so $a \in A$.
It follows that $A \in V_{a}$.
By assumption, $A^{-1} \cdot A = A \cdot A^{-1}$.
Now $ea \in A$ implies that $ea(ea)^{-1} \in A \cdot A^{-1}$.
Thus by assumption, $ea(ea)^{-1} \in A^{-1} \cdot A$.
Hence $aeaa^{-1}e \in A$ and so $ae \in A$.
We have show that $A \in V_{ae}$.
The reverse inclusion follows by symmetry.
\end{proof}

\subsection{The support operator}

The following notion and result is from \cite{Leech}.
Let $S$ be an inverse monoid.
A function $\phi \colon S \rightarrow E(S)$ is called a {\em fixed-point operator} if it satisfies the following two conditions:
\begin{description}

\item[{\rm (FPO1)}]  $s \geq \phi (s)$.

\item[{\rm (FPO2)}]  If $s \geq e$ where $e$ is any idempotent then $\phi (s) \geq e$.

\end{description}

\begin{proposition}\label{prop:leech} An inverse monoid $S$ is an inverse $\wedge$-monoid if, and only if, it possesses a fixed-point operator.
In an inverse $\wedge$-monoid, we have that $\phi (s) = s \wedge 1$.
\end{proposition}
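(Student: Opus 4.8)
The plan is to prove the two implications separately, and to record at the outset that any fixed-point operator is forced to be the function sending $s$ to the largest idempotent lying beneath $s$: conditions (FPO1) and (FPO2) say precisely that $\phi(s)$ is an idempotent below $s$ that dominates every idempotent below $s$. In particular a fixed-point operator, if it exists, is unique. This observation will also deliver the final formula once the forward direction is in place.

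For the implication that an inverse $\wedge$-monoid $S$ admits a fixed-point operator, I would simply verify that $\phi(s) := s \wedge 1$ works. First note $s \wedge 1 \leq 1$, and since every element beneath an idempotent is itself idempotent, $\phi(s) \in E(S)$, so $\phi$ does map into $E(S)$. Condition (FPO1) is immediate since $s \wedge 1 \leq s$. For (FPO2), if $e$ is an idempotent with $e \leq s$, then also $e \leq 1$, as idempotents lie below the identity in a monoid, so $e$ is a lower bound of $\{s,1\}$ and hence $e \leq s \wedge 1 = \phi(s)$. This direction is routine.

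The substance is the converse: given a fixed-point operator $\phi$, I must produce arbitrary binary meets. The guiding idea, transparent in $I(X)$, is that the domain idempotent of $s \wedge t$ should be the identity on the set where $s$ and $t$ agree, which is exactly the set of fixed points of the partial bijection $s^{-1}t$. I therefore propose the formula
$$s \wedge t = s\,\phi(s^{-1}t),$$
and set $f = \phi(s^{-1}t)$ and $w = sf$. That $w$ is a lower bound is quick: $w = sf \leq s$ because $f$ is idempotent, and $w \leq t$ because $f \leq s^{-1}t$ (by (FPO1)) gives $sf \leq ss^{-1}t \leq t$ after left-multiplying by $s$ and using that multiplication respects the natural partial order.

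The main obstacle is showing that $w$ is the \emph{greatest} lower bound. Given any $u$ with $u \leq s$ and $u \leq t$, write $u = s\,\mathbf{d}(u) = t\,\mathbf{d}(u)$. The key step is to evaluate $s^{-1}u$ in two ways: using $u = s\,\mathbf{d}(u)$ together with $\mathbf{d}(u) \leq \mathbf{d}(s)$ gives $s^{-1}u = \mathbf{d}(u)$, while using $u = t\,\mathbf{d}(u)$ gives $s^{-1}u = s^{-1}t\,\mathbf{d}(u)$. Equating these yields $\mathbf{d}(u) = (s^{-1}t)\mathbf{d}(u)$, which says precisely that the idempotent $\mathbf{d}(u)$ lies below $s^{-1}t$. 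Now (FPO2) forces $\mathbf{d}(u) \leq f$, whence $u = s\,\mathbf{d}(u) \leq sf = w$. This proves $w = s \wedge t$, so $S$ is an inverse $\wedge$-monoid. Finally, combining the two directions with the uniqueness noted at the start identifies the fixed-point operator as $\phi(s) = s \wedge 1$, giving the last assertion.
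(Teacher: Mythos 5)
Your proof is correct. The paper does not actually prove this proposition---it is quoted from Leech's paper---but your argument, resting on the formula $s \wedge t = s\,\phi(s^{-1}t)$ together with the observation that (FPO1) and (FPO2) force $\phi(s)$ to be the maximum idempotent below $s$, is exactly the standard one, and every step (in particular the computation $s^{-1}u = \mathbf{d}(u) = (s^{-1}t)\mathbf{d}(u)$ yielding $\mathbf{d}(u) \leq s^{-1}t$) checks out.
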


In our work, it will be more convenient to work with 
$$\sigma (s) = \overline{\phi (s)}s^{-1}s$$
which we call the {\em support operator}.
The idempotent $\sigma (s)$ is called the {\em support of $s$}.

\begin{lemma}\label{lem:cooper} Let $S$ be a Boolean inverse $\wedge$-monoid.
Then
$$s = \phi (s) \vee s \sigma (s)$$
is an orthogonal join, and $\phi (s \sigma (s)) = 0$.
\end{lemma}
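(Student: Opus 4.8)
We must show that for $s$ in a Boolean inverse $\wedge$-monoid,
$$s = \phi(s) \vee s\sigma(s)$$
is an orthogonal join, and that $\phi(s\sigma(s)) = 0$.

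Here $\phi(s) = s \wedge 1$ by Proposition~\ref{prop:leech}, and $\sigma(s) = \overline{\phi(s)}\,s^{-1}s$ by definition.

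=== PROOF PROPOSAL ===

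The plan is to unwind the definitions and verify three things in turn: that $\phi(s)$ and $s\sigma(s)$ are orthogonal, that their join is exactly $s$, and finally that $\phi(s\sigma(s)) = 0$. Throughout I will write $e = \phi(s) = s \wedge 1$, which is an idempotent satisfying $e \leq s$ and $e \leq 1$; note that since $e \leq s^{-1}s$ as well (as $e$ is an idempotent below $s$, we have $e = ee^{-1} \leq ss^{-1}$ and $e = e^{-1}e \leq s^{-1}s$), the support $\sigma(s) = \overline{e}\,s^{-1}s$ is a genuine idempotent below $s^{-1}s$, where $\overline{e}$ is the complement of $e$ in the Boolean algebra $E(S)$.

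First I would establish orthogonality of $\phi(s)$ and $s\sigma(s)$. The key computation is on domains and ranges. Since $\sigma(s) \leq s^{-1}s$, the product $s\sigma(s)$ is a restricted product with $\mathbf{d}(s\sigma(s)) = \sigma(s) = \overline{e}\,s^{-1}s$ and $\mathbf{r}(s\sigma(s)) = s\sigma(s)s^{-1}$. To check orthogonality I must verify that $\phi(s)(s\sigma(s))^{-1} = 0$ and $\phi(s)^{-1}(s\sigma(s)) = 0$, which amounts to showing the domain and range idempotents are orthogonal to $e$. For the domain side: $\mathbf{d}(s\sigma(s)) = \overline{e}\,s^{-1}s$, and since $e \leq \overline{e}\,s^{-1}s$ would force $e \leq \overline{e}$, i.e. $e = 0$, we instead compute $e \wedge \overline{e}\,s^{-1}s = (e \wedge \overline{e})\,s^{-1}s = 0$ using that $e$ is an idempotent with $e \cdot \mathbf{d}(s\sigma(s)) = e\,\overline{e}\,s^{-1}s = 0$. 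The range side follows because $e$ is central among the idempotents only up to the computation $\mathbf{r}(s\sigma(s)) = s\overline{e}s^{-1}s s^{-1} = s\overline{e}\,s^{-1}$, and $e\, s\overline{e}s^{-1}$; here I would use $e \leq s$ to write $e = e\,s^{-1}s$ and push $e$ through, ultimately reducing to $e\overline{e} = 0$. This is the step I expect to be the \emph{main obstacle}: the range orthogonality requires carefully moving the idempotent $e$ across $s$, and the cleanest route is to observe that $e \leq s$ and $e \leq 1$ together give $es = e = se$ on the relevant subidempotents, so that $\mathbf{r}(s\sigma(s))$ lies below $s\overline{e}s^{-1}$, which is orthogonal to $e = ess^{-1}$ once we note $s\overline{e}s^{-1} \wedge ss^{-1}e$ collapses via $\overline{e}\wedge e = 0$.

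Next, granting orthogonality, I would show the join equals $s$. Since both $\phi(s) = e \leq s$ and $s\sigma(s) = s\overline{e}\,s^{-1}s \leq s$ (the latter because $\sigma(s) \leq s^{-1}s$ means $s\sigma(s)$ is a restriction of $s$), the orthogonal join $\phi(s) \vee s\sigma(s)$ exists and lies below $s$. To get equality I compute domains using Lemma~\ref{le:rik}: $\mathbf{d}(\phi(s) \vee s\sigma(s)) = \mathbf{d}(e) \vee \mathbf{d}(s\sigma(s)) = e \vee \overline{e}\,s^{-1}s$. Since $e \leq s^{-1}s$, this equals $(e \vee \overline{e})\wedge s^{-1}s$ after a short Boolean-algebra manipulation, which is $1 \wedge s^{-1}s = s^{-1}s = \mathbf{d}(s)$. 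As remarked after Lemma~\ref{le: meets}, two elements bounded above by a common element with the same domain idempotent are equal; since both $\phi(s) \vee s\sigma(s)$ and $s$ are below $s$ with equal domain $s^{-1}s$, they coincide, giving $s = \phi(s) \vee s\sigma(s)$.

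Finally, for $\phi(s\sigma(s)) = 0$: by Proposition~\ref{prop:leech}, $\phi(s\sigma(s)) = (s\sigma(s)) \wedge 1$, which is the largest idempotent below $s\sigma(s)$. Any idempotent $i \leq s\sigma(s) \leq s$ satisfies $i \leq \phi(s) = e$ by property (FPO2). But $i \leq s\sigma(s)$ also forces $i = \mathbf{r}(i) \leq \mathbf{r}(s\sigma(s))$, and by the orthogonality established above $\mathbf{r}(s\sigma(s))$ is orthogonal to $e$, so $i \leq e \wedge \overline{e}\text{-side} = 0$. Hence the only idempotent below $s\sigma(s)$ is $0$, giving $\phi(s\sigma(s)) = 0$, which completes the proof.
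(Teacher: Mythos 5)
Your proof is correct and follows essentially the same route as the paper: both rest on the Boolean decomposition $s^{-1}s = \phi(s) \vee \sigma(s)$ (equivalently $1 = \phi(s) \vee \overline{\phi(s)}$ cut down to $s^{-1}s$) lifted through $s$, and you supply in full the orthogonality and $\phi(s\sigma(s))=0$ verifications that the paper dismisses as routine. The only cosmetic difference is that the paper obtains $s = \phi(s) \vee s\sigma(s)$ by left-multiplying the idempotent identity by $s$ and invoking distributivity, whereas you identify the join with $s$ by noting both sides lie below $s$ and have the same domain idempotent; both steps are valid.
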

\begin{proof} Let $s \in S$.
Observe that $1 = \phi (s) \vee \overline{\phi (s)}$.
Multiplying on the right by $s^{-1}s$ and observing that $\phi (s) \leq s^{-1}s$,
we get that $s^{-1}s = \phi (s) \vee \sigma (s)$.
Multiplying on the left by $s$ and observing that $s \phi (s) = \phi (s)$,
we get that $s = \phi (s) \vee s \sigma (s)$.
It is routine to check that $\phi (s) \perp s \sigma (s)$, and that $\phi (s \sigma (s)) = 0$.
\end{proof}

Part (2) below is further evidence of the interaction between the properties of the group of units of the monoid
and the properties of the monoid as a whole.

\begin{lemma}\label{lem:jerry} Let $S$ be a Boolean inverse $\wedge$-monoid.
\begin{enumerate}

\item If $a,b \in S$ are such that $\overline{\phi (a)} \,   \overline{\phi (b)} = 0$ then $ab = ba$.

\item If $g,h \in U(S)$ and $\sigma (g)\sigma (h) = 0$ then $[g,h] = 1$.

\item Let $g$ and $h$ be units. Then $\sigma (ghg^{-1}) = g \sigma (h) g^{-1}$. 

\end{enumerate}
\end{lemma}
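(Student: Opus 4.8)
The plan is to prove (1) directly and then obtain (2) and (3) with little extra work, exploiting the fact that for a unit $g$ one has $\mathbf{d}(g) = 1$ and hence $\sigma(g) = \overline{\phi(g)}\mathbf{d}(g) = \overline{\phi(g)}$, which is what links the three statements. For (1), the first move is to rephrase the hypothesis: in the Boolean algebra $E(S)$, the condition $\overline{\phi(a)}\,\overline{\phi(b)} = 0$ is equivalent to $\phi(a) \vee \phi(b) = 1$, whence $\overline{\phi(a)} \leq \phi(b)$, $\overline{\phi(b)} \leq \phi(a)$, and $\overline{\phi(a)} \wedge \overline{\phi(b)} = 0$.

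Writing $e = \phi(a)$ and $f = \phi(b)$, I would use Lemma~\ref{lem:cooper} to decompose $a = e \vee a'$ and $b = f \vee b'$ as orthogonal joins, where $a' = a\sigma(a)$ and $b' = b\sigma(b)$ are the ``moving parts''. The crucial observation is that orthogonality of the join $a = e \vee a'$ forces $\mathbf{d}(a') , \mathbf{r}(a') \leq \overline{e}$, since $e$ is its own domain and range idempotent; symmetrically $\mathbf{d}(b'), \mathbf{r}(b') \leq \overline{f}$. Because $\overline{e} \wedge \overline{f} = 0$, the elements $a'$ and $b'$ are then orthogonal, so that $a'b' = 0 = b'a'$.

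With these facts the computation is routine. Expanding by distributivity gives $ab = ef \vee eb' \vee a'f \vee a'b'$ and $ba = fe \vee fa' \vee b'e \vee b'a'$. Using $\mathbf{r}(b'), \mathbf{d}(b') \leq \overline{f} \leq e$ and $\mathbf{d}(a'), \mathbf{r}(a') \leq \overline{e} \leq f$, I would simplify $eb' = b' = b'e$ and $a'f = a' = fa'$, while $ef = fe$ and $a'b' = 0 = b'a'$; hence both products collapse to $ef \vee a' \vee b'$ and $ab = ba$, proving (1). Part (2) is then immediate: a unit $g$ has $\sigma(g) = \overline{\phi(g)}$, so $\sigma(g)\sigma(h) = 0$ is exactly the hypothesis of (1), giving $gh = hg$ and therefore $[g,h] = 1$.

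For (3), I would use that inner conjugation $x \mapsto gxg^{-1}$ by a unit $g$ is an automorphism of the inverse $\wedge$-monoid $S$, hence preserves binary meets and fixes $1$. Since $\phi(s) = s \wedge 1$ by Proposition~\ref{prop:leech}, this yields $g\phi(h)g^{-1} = g(h \wedge 1)g^{-1} = ghg^{-1} \wedge 1 = \phi(ghg^{-1})$; as conjugation also preserves complements in $E(S)$ and $ghg^{-1}$ is again a unit, I conclude $\sigma(ghg^{-1}) = \overline{\phi(ghg^{-1})} = g\,\overline{\phi(h)}\,g^{-1} = g\sigma(h)g^{-1}$. The only genuine work lies in (1), and the main point to get right there is the orthogonality bookkeeping locating the domains and ranges of $a'$ and $b'$ beneath $\overline{\phi(a)}$ and $\overline{\phi(b)}$ respectively; once that is in place, the vanishing of $a'b'$ and $b'a'$ and the simplification of the cross terms follow at once.
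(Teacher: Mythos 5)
Your proof is correct and follows essentially the same route as the paper: part (1) is the same orthogonal decomposition $a = \phi(a) \vee a\sigma(a)$ from Lemma~\ref{lem:cooper}, with the same bookkeeping placing $\mathbf{d}$ and $\mathbf{r}$ of the moving parts under $\overline{\phi(a)}$ and $\overline{\phi(b)}$ so the cross terms vanish, and part (2) is identical. For (3) you package the paper's two inequalities ($g\phi(h)g^{-1}\leq \phi(ghg^{-1})$ and its reverse) as the statement that conjugation by a unit is a meet- and complement-preserving automorphism; this is a harmless and slightly cleaner reformulation of the same argument.
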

\begin{proof} (1) From $1 =  \phi (a) \vee \overline{\phi (a)}$ and $a = 1a1$.
It quickly follows that
$$a = \overline{\phi (a)} a \overline{\phi (a)} \vee \phi (a).$$
In addition, easy calculations show that
$\overline{\phi (a)}  \leq \phi (b)$ and $\overline{\phi (b)} \leq \phi (a)$.
We calculate
$$ab = \phi (a) \phi (b) \vee \overline{\phi (b)} b \overline{\phi (b)} \vee \overline{\phi (a)} a \overline{\phi (a)}.$$
By symmetry, this is equal to $ba$.

(2) This is immediate by (1), and the fact that when $g$ is a unit $\sigma (g) = \overline{\phi (g)}$.

(3) From $\phi (h) \leq h$ we get that $g \phi (h) g^{-1} \leq ghg^{-1}$.
From $\phi (ghg^{-1}) \leq ghg^{-1}$ we get that $g^{-1} \phi (ghg^{-1})g \leq h$ and so $g^{-1} \phi (ghg^{-1})g \leq \phi (h)$.
Thus $\phi (ghg^{-1}) \leq g \phi (h) g^{-1}$.
It follows that $g \phi (ghg^{-1})g^{-1} = \phi (ghg^{-1})$.
Take complements to get the desired result.
\end{proof}

\begin{lemma}\label{lem:jinxy} Let $S$ be a fundamental Boolean inverse $\wedge$-monoid.
\begin{enumerate}

\item Suppose that $af = fa$ for all $f \leq \overline{\phi (a)}$.
Then $a$ is an idempotent.

\item If $a^{-1}a = aa^{-1}$ and  $af = fa$ for all $f \leq \sigma (a) $.
Then $a$ is an idempotent.

\end{enumerate}
\end{lemma}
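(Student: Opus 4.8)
The plan is to prove (1) by showing that the hypothesis forces $a$ to commute with \emph{every} idempotent, so that $a \in Z(E(S))$; fundamentality, in the form $E(S) = Z(E(S))$, then delivers $a \in E(S)$ at once. The hypothesis supplies commutation only with idempotents beneath $\overline{\phi (a)}$, so the real work is to see that $a$ automatically commutes with idempotents beneath $\phi (a)$ as well. This is where I would exploit (FPO1): since $\phi (a) \leq a$, any idempotent $i \leq \phi (a)$ satisfies $i \leq a$, and for an idempotent beneath an element the natural partial order gives $ai = i = ia$. Thus $a$ fixes, on both sides, everything lying beneath $\phi (a)$.

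With both extremes understood, I would handle an arbitrary idempotent $f$ by splitting it along the complementary pair $\phi (a), \overline{\phi (a)}$ as the orthogonal join $f = f\phi (a) \vee f\overline{\phi (a)}$. The first summand lies beneath $\phi (a) \leq a$, so $a$ fixes it; the second lies beneath $\overline{\phi (a)}$, so $a$ commutes with it by hypothesis. Because $S$ is distributive, multiplication distributes over this orthogonal join, and combining the two computations gives $af = f\phi (a) \vee (f\overline{\phi (a)})a = fa$. Hence $a \in Z(E(S)) = E(S)$, i.e. $a$ is an idempotent. (The decomposition $a = \phi (a) \vee a\sigma (a)$ of Lemma~\ref{lem:cooper} offers an alternative route, via showing the non-fixed part $a\sigma (a)$ is an idempotent with $\phi (a\sigma (a)) = 0$ and therefore zero, but the direct centralizer argument seems shorter.)

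For (2) I would simply reduce to (1) by verifying its hypothesis. Writing $e = a^{-1}a = aa^{-1}$, so that $\sigma (a) = \overline{\phi (a)}e$, I take any idempotent $f \leq \overline{\phi (a)}$ and split it as the orthogonal join $f = fe \vee f\bar{e}$. The summand $fe$ lies beneath $\overline{\phi (a)}e = \sigma (a)$, so $a$ commutes with it by the hypothesis of~(2). The summand $f\bar{e}$ is annihilated on both sides: from $a = ae = ea$ together with $e \perp \bar{e}$ one gets $a(f\bar{e}) = 0 = (f\bar{e})a$, so $a$ commutes with it trivially. Distributivity again yields $af = fa$ for every $f \leq \overline{\phi (a)}$, which is exactly the hypothesis of~(1); applying~(1) then finishes the argument.

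The steps are mostly bookkeeping with the fixed-point and support operators, and the only genuinely structural ingredient is fundamentality, used to turn ``commutes with all idempotents'' into ``is an idempotent.'' The point most worth stating carefully — and the step I expect a reader to want spelled out — is the observation that $\phi (a) \leq a$ automatically gives commutation below $\phi (a)$, since without it the given hypothesis (commutation below $\overline{\phi (a)}$ only) would not obviously cover all idempotents; the orthogonality of the splittings, needed so that multiplication distributes, is the other fact to check but is routine in a distributive inverse $\wedge$-monoid.
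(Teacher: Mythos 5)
Your proof is correct and follows essentially the same route as the paper: split an arbitrary idempotent along the complementary pair $\phi(a)$, $\overline{\phi(a)}$, note that $a$ fixes idempotents beneath $\phi(a)\leq a$ and commutes with those beneath $\overline{\phi(a)}$ by hypothesis, then invoke distributivity and fundamentality. Part (2), which the paper dismisses as ``immediate by (1),'' is handled by exactly the reduction you spell out, so nothing further is needed.
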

\begin{proof} (1 ) Let $e$ be an arbitrary idempotent.
Since $1 = \overline{\phi (a)} \vee \phi (a)$ we have that $e = e \overline{\phi (a)} \vee e \phi (a)$.
Put $i = e \overline{\phi (a)} \leq \overline{\phi (a)}$ and $j = e \phi (a) \leq \phi (a)$.
By assumption, $ai = ia$.
Clearly $j \leq a$ and so $j = aj = ja$.
Thus $a$ commutes with both $i$ and $j$.
It is immediate that $a$ commutes with $e$.
But $e$ we arbitrary and so $a$ commutes with any idempotent.
Since $S$ is fundamental, it follows that $a$ is an idempotent.

(2) This is immediate by (1).
\end{proof}

We give an explicit proof of the following because of its importance.

\begin{lemma}\label{lem:dream}
Let $F \in U_{s^{-1}s}$.
\begin{enumerate}

\item $(sFs^{-1})^{\uparrow}$ is an ultrafilter in $S$.

\item If $sFs^{-1} \subseteq F$ then  $F$ is the only ultrafilter in $E(S)$ containing $sFs^{-1}$.

\end{enumerate}
\end{lemma}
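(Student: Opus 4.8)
The plan is to reduce both parts to machinery already established for the homeomorphisms $\theta_{s}$ and the range map $\mathbf{r}$ on ultrafilters, so that almost no new computation is needed.

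For part (1), I would recognise $(sFs^{-1})^{\uparrow}$ as the range of a known ultrafilter. Put $A = (sF)^{\uparrow}$. Since $s^{-1}s \in F$, the element $s^{-1}s$ lies in the idempotent ultrafilter $F^{\uparrow}$ of $S$, so by the general construction recalled in the duality subsection (if $a^{-1}a$ lies in an idempotent ultrafilter $G$, then $(aG)^{\uparrow}$ is an ultrafilter), $A$ is an ultrafilter in $S$ with $\mathbf{d}(A) = F^{\uparrow}$ and $s \in A$. Then $\mathbf{r}(A) = (AA^{-1})^{\uparrow}$ is an idempotent ultrafilter. A short computation identifies this with $(sFs^{-1})^{\uparrow}$: a typical generator of $AA^{-1}$ has the form $sf\,(sf')^{-1} = s(f \wedge f')s^{-1}$ with $f \wedge f' \in F$, and conversely $sfs^{-1} = sf\,(sf)^{-1} \in AA^{-1}$; hence the two sets have the same upward closure. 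Thus $(sFs^{-1})^{\uparrow} = \mathbf{r}(A)$ is an ultrafilter in $S$, as required.

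For part (2), the key observation is that the hypothesis $sFs^{-1} \subseteq F$ says precisely that $F$ is a fixed point of $\theta_{s}$. Write $G = E((sFs^{-1})^{\uparrow})$, which equals $\theta_{s}(F)$ by the direct formula for $\theta_{s}$. By part (1) together with the fact that the idempotent part of an idempotent ultrafilter is an ultrafilter of $E(S)$, $G$ is an ultrafilter in $E(S)$; moreover $G$ is exactly the filter of $E(S)$ generated by the filter base $sFs^{-1}$ (this set is a filter base because $f \mapsto sfs^{-1}$ is order preserving and $F$ is closed under meets). Since the filter generated by $sFs^{-1}$ is \emph{already} an ultrafilter, any ultrafilter of $E(S)$ containing $sFs^{-1}$ must contain $G$ and, by maximality of ultrafilters, equal $G$; so $G$ is the unique ultrafilter containing $sFs^{-1}$. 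Finally, under the hypothesis, $sFs^{-1} \subseteq F$ and the upward-closure of $F$ in $E(S)$ give $G \subseteq F$, and one ultrafilter contained in another forces $G = F$. Hence $F$ itself is the unique ultrafilter of $E(S)$ containing $sFs^{-1}$.

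The only real obstacle is the bookkeeping connecting upward closures taken in $S$ with filters generated inside $E(S)$: one must confirm that $E((sFs^{-1})^{\uparrow})$ coincides with the $E(S)$-filter generated by $sFs^{-1}$ and is an ultrafilter there. Once the identification $(sFs^{-1})^{\uparrow} = \mathbf{r}((sF)^{\uparrow}) = \theta_{s}(F)^{\uparrow}$ is in place, the remainder is the routine fact that a filter containing a set that already generates an ultrafilter must equal that ultrafilter, together with the maximality of ultrafilters.
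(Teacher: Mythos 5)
Your proof is correct, and for part (1) it takes a genuinely different route from the paper. The paper verifies directly that $(sFs^{-1})^{\uparrow}$ is an ultrafilter by applying the meet criterion of Lemma~\ref{lem:exel}: it takes an idempotent $e \leq ss^{-1}$ with $e(sfs^{-1}) \neq 0$ for all $f \in F$, conjugates back to get $s^{-1}esf \neq 0$, concludes $s^{-1}es \in F$ because $F$ is an ultrafilter in $E(S)$, and hence $e \in sFs^{-1}$. You instead identify $(sFs^{-1})^{\uparrow}$ with $\mathbf{r}\bigl((sF)^{\uparrow}\bigr)$ via the computation $(AA^{-1})^{\uparrow} = (sFs^{-1})^{\uparrow}$, and then invoke two facts already recorded in the duality subsection: that $(sF)^{\uparrow}$ is an ultrafilter whenever $s^{-1}s$ lies in the idempotent ultrafilter $F^{\uparrow}$, and that $\mathbf{r}$ of an ultrafilter is an ultrafilter. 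This buys economy and makes the lemma visibly a corollary of the groupoid structure on $\mathsf{G}(S)$, at the price of the small bookkeeping identification you flag; the paper's version is more self-contained and, importantly, its explicit computation is what makes part (2) ``immediate'' in the paper's one-line treatment. For part (2) your argument is essentially the one the paper leaves implicit --- the filter of $E(S)$ generated by $sFs^{-1}$ is already an ultrafilter, so by maximality it is the unique one containing $sFs^{-1}$, and under the hypothesis it sits inside $F$ and therefore equals $F$ --- but you spell it out, which is a service to the reader rather than a divergence.
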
 
\begin{proof} (1) It is easy to check that  $(sFs^{-1})^{\uparrow}$ is a proper filter.
To show that it is an ultrafilter, it is enough to prove the following.
let $e$ be an idempotent $e \leq ss^{-1}$ such that
$e(sfs^{-1}) \neq 0$ for all $f \in F$.
Then $s^{-1}esf \neq 0$ for all $f \in F$.
It follows that $s^{-1}es \in F$.
Thus $e \in sFs^{-1}$.

The proof of (2) follows immediately from the proof of (1).
\end{proof}

Our next result establishes that our abstract notion of support agrees with the concrete notion.
If $Y$ is a subset of a topological space then $\mathsf{cl}(Y)$ denotes the {\em closure} of that subset.

\begin{proposition}\label{prop:tempest} Let $S$ be a fundamental Boolean inverse $\wedge$-monoid.
For each $s$, we have that
$$U_{\sigma (s)} = \mathsf{cl}(\{ F \colon F \in U_{s^{-1}s} \mbox{ and } sFs^{-1} \nsubseteq F   \}).$$
\end{proposition}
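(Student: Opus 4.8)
The plan is to translate the statement into the language of the structure space and to read the condition $sFs^{-1} \nsubseteq F$ as saying that the point $F$ is \emph{moved} by $s$. Write $\mathrm{Fix} = \{F \in U_{s^{-1}s} \colon sFs^{-1} \subseteq F\}$ and $N = U_{s^{-1}s} \setminus \mathrm{Fix}$, so that the right-hand side of the proposition is $\mathsf{cl}(N)$. By Lemma~\ref{lem:dream}(2) the condition $sFs^{-1} \subseteq F$ is equivalent to $\theta_{s}(F) = F$, so $\mathrm{Fix}$ is exactly the fixed-point set of the partial homeomorphism $\theta_{s}$. By Lemma~\ref{lem:cooper} we have the orthogonal decomposition $s^{-1}s = \phi(s) \vee \sigma(s)$, and since $\phi(s) \perp \sigma(s)$ this yields a partition $U_{s^{-1}s} = U_{\phi(s)} \sqcup U_{\sigma(s)}$. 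As $\sigma(s)$ is an idempotent, $U_{\sigma(s)}$ is clopen and hence closed; so it suffices to prove the two inclusions $N \subseteq U_{\sigma(s)}$ and $U_{\sigma(s)} \subseteq \mathsf{cl}(N)$, that is, that $N$ is both contained in and dense in $U_{\sigma(s)}$.

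For the first inclusion I would show $U_{\phi(s)} \subseteq \mathrm{Fix}$. The elementary facts needed are that an idempotent $e \leq s$ satisfies $ses^{-1} = e$, and that $\phi(s) \leq s$ gives $s\phi(s) = \phi(s) = \phi(s)s$. From these one computes, for any $f$, the identity $\phi(s)(sfs^{-1}) = \phi(s)f$. Now if $F \in U_{\phi(s)}$ and $f \in F$, then $\phi(s)f = \phi(s) \wedge f \in F$ and $\phi(s)f \leq sfs^{-1}$, so $sfs^{-1} \in F$ since $F$ is closed upwards; thus $sFs^{-1} \subseteq F$. Consequently $N = U_{s^{-1}s} \setminus \mathrm{Fix} \subseteq U_{s^{-1}s} \setminus U_{\phi(s)} = U_{\sigma(s)}$.

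The density inclusion is the main obstacle, and it is where fundamentality enters. Density of $N$ in the clopen set $U_{\sigma(s)}$ amounts to showing that for every nonzero idempotent $g \leq \sigma(s)$ the basic set $U_{g}$ meets $N$. Suppose, for contradiction, that $U_{g} \subseteq \mathrm{Fix}$ for some such $g$. Then the basic open set $V_{sg} \subseteq \mathsf{G}(S)$ lies inside the isotropy subgroupoid: any ultrafilter $A \in V_{sg}$ has $E(\mathbf{d}(A)) \in U_{g} \subseteq \mathrm{Fix}$, so $E(\mathbf{r}(A)) = \theta_{s}(E(\mathbf{d}(A))) = E(\mathbf{d}(A))$ and hence $\mathbf{d}(A) = \mathbf{r}(A)$, i.e. $A \in \mbox{Iso}(\mathsf{G}(S))$. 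Since $S$ is fundamental, $\mathsf{G}(S)$ is essentially principal by Proposition~\ref{prop:one}, so the open set $V_{sg}$ must be contained in $\mathsf{G}(S)_{o}$. By Lemma~\ref{lem:pele}(5) this forces $sg$ to be an idempotent. But $\mathbf{d}(sg) = g$, so $sg = g$, whence $g \leq s$ is an idempotent and therefore $g \leq \phi(s)$. Since $g \leq \sigma(s)$ and $\phi(s)\sigma(s) = 0$, we get $g = 0$, a contradiction. Hence $N$ is dense in $U_{\sigma(s)}$, and combining the two inclusions gives $U_{\sigma(s)} = \mathsf{cl}(N)$.

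One could avoid the explicit groupoid language in the last paragraph by instead invoking Lemma~\ref{lem:bingo}(1): from $U_{g} \subseteq \mathrm{Fix}$ one shows $sgs^{-1} = g$ and that $\theta_{sg}$ and $\theta_{g}$ agree on $U_{g}$, so $sg \, \mu \, g$, and fundamentality gives $sg = g$ directly, leading to the same contradiction. Either way, the substantive point is the passage from ``$\theta_{s}$ fixes a whole clopen piece $U_{g}$ of $\sigma(s)$'' to ``$sg$ is an idempotent'', which is precisely the content of essential principality and hence of the fundamentality hypothesis.
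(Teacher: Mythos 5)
Your proposal is correct and follows essentially the same route as the paper: the easy inclusion comes from observing that $\phi(s)\in F$ forces $sFs^{-1}\subseteq F$, and density is obtained by assuming a whole basic clopen piece of $U_{\sigma(s)}$ is fixed and invoking Proposition~\ref{prop:one} (essential principality from fundamentality) to reach a contradiction. The only cosmetic difference is that you extract the contradiction globally, concluding that $sg$ is an idempotent and hence $g\leq\phi(s)$, whereas the paper works pointwise at a single ultrafilter $F$ and concludes $\phi(s)\in F$; both are the same mechanism.
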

\begin{proof}
Let $F$ be such that $F \in U_{s^{-1}s}$
and
$sFs^{-1} \nsubseteq F$.
If $\phi (s) \in F$ then 
$s \in F^{\uparrow}$ and so $sFs^{-1}  \subseteq F$, which is a contradiction.
Thus $\overline{\phi (s)} \in F$ which, together with the fact that $s^{-1}s \in F$ gives $\sigma (s) \in F$.

To prove the reverse inclusion, 
put 
$$Y = \{ F \colon  F \in U_{s^{-1}s} \mbox{ and }  sFs^{-1} \nsubseteq F   \}.$$ 
Let $F \in U_{\sigma (s)}$.
We need to show that every open set containing $F$ intersects $Y$.
It is enough to restrict attention to those open sets $U_{e}$ where $e \in F$.
Suppose that $Y \cap U_{e} = \emptyset$.
Observe that $G \in U_{e}$ contains $s^{-1}s$.
Thus for every $G \in U_{e}$ we have that $sGs^{-1} \subseteq G$.
Hence the set $Z = V_{s} \cap \mathbf{d}^{-1}(V_{e})$ is an open subset of $\mbox{Iso}(\mathsf{G}(S))$.
It follows by Proposition~\ref{prop:one} that every ultrafilter in $Z$ is idempotent.
Thus $(sF^{\uparrow})^{\uparrow}$ is an idempotent ultrafilter.
We may therefore find $f \in F$ such that $f \leq s$.
But then $f \leq \phi (s) \leq s$.
Hence $\phi (s) \in F$.
But this contradicts the fact that $\sigma (s) \in F$.
It follows that $Y \cap U_{e} \neq \emptyset$.
We have therefore proved that $U_{\sigma (s)}$ is the closure of $Y$.
\end{proof}

The above proposition will be used mainly in the case where $s= g$ is a unit.
In this case, if $F \subseteq E(S)$ is an ultrafilter then so is $gFg^{-1}$.
We therefore have the following special case of Proposition~\ref{prop:tempest}.

\begin{proposition}\label{prop:labor} Let $S$ be a fundamental Boolean inverse $\wedge$-monoid.
For each unit $g$, we have that
$$U_{\sigma (g)} = \mathsf{cl}(\{ F \colon F \in \mathsf{X}(S) \mbox{ and } gFg^{-1} \neq F   \}).$$
\end{proposition}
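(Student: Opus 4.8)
The plan is to deduce Proposition~\ref{prop:labor} as the special case $s = g$ of Proposition~\ref{prop:tempest}, reconciling the two descriptions of the support set. Proposition~\ref{prop:tempest} already gives
$$U_{\sigma (g)} = \mathsf{cl}(\{ F \colon F \in U_{g^{-1}g} \mbox{ and } gFg^{-1} \nsubseteq F   \}),$$
so everything reduces to showing that the set appearing inside the closure here coincides with the set $\{ F \in \mathsf{X}(S) \colon gFg^{-1} \neq F \}$ appearing in the statement to be proved. First I would observe that since $g$ is a unit we have $g^{-1}g = 1$, so the condition $F \in U_{g^{-1}g}$ is automatic: every ultrafilter $F \subseteq E(S)$ lies in $U_{1} = \mathsf{X}(S)$. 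This disposes of the discrepancy between $U_{g^{-1}g}$ and $\mathsf{X}(S)$.

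Next I would justify the remark immediately preceding the statement, namely that for a unit $g$ and an ultrafilter $F \subseteq E(S)$, the set $gFg^{-1}$ is again an ultrafilter in $E(S)$. This is where conjugation by a unit behaves well: the map $e \mapsto geg^{-1}$ is an order-isomorphism of $E(S)$ onto itself (with inverse $e \mapsto g^{-1}eg$), since $g$ is invertible, and so it carries ultrafilters to ultrafilters. In particular $gFg^{-1}$ is already closed upwards and closed under meets, so there is no need to pass to the upward closure $(gFg^{-1})^{\uparrow}$ as in the general case.

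The heart of the matter is then to show that for ultrafilters $F, gFg^{-1} \subseteq E(S)$ the containment condition and the equality condition agree, that is,
$$gFg^{-1} \nsubseteq F \quad\Longleftrightarrow\quad gFg^{-1} \neq F.$$
The forward implication is trivial. For the reverse, I would argue that $gFg^{-1} \subseteq F$ forces $gFg^{-1} = F$: since $gFg^{-1}$ is itself an ultrafilter (by the previous paragraph) and ultrafilters are maximal proper filters, a proper filter contained in an ultrafilter must equal it. Thus the two sets inside the closures coincide, and taking closures gives the claimed identity. The only step requiring genuine care—and the one I expect to be the main obstacle—is the maximality argument: one must be sure that $gFg^{-1}$ really is a full ultrafilter and not merely a filter base, so that the "no proper filter strictly between an ultrafilter and $E(S)$" reasoning applies; this is precisely why establishing the order-isomorphism property of conjugation by $g$ is the essential preliminary. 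With that in hand the proposition follows immediately from Proposition~\ref{prop:tempest}.
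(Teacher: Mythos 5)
Your proposal is correct and takes essentially the same route as the paper: the paper also obtains this statement as the special case $s=g$ of Proposition~\ref{prop:tempest}, observing that for a unit $g$ the set $gFg^{-1}$ is again an ultrafilter, so that $gFg^{-1}\nsubseteq F$ is equivalent to $gFg^{-1}\neq F$ and $U_{g^{-1}g}=U_{1}=\mathsf{X}(S)$. Your filling-in of the details (conjugation by $g$ is an order-automorphism of $E(S)$, hence carries ultrafilters to ultrafilters, and maximality then upgrades containment to equality) is exactly what the paper leaves implicit.
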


Thus for units $g$, the idempotent $\sigma (g)$ translates into the support in the usual topological sense.

The next result is important in translating from topology to algebra.

\begin{lemma}\label{lem:pingo} Let $S$ be a fundamental Boolean inverse $\wedge$-monoid and let $g$ be a unit.
\begin{enumerate}

\item Let $F \subseteq E(S)$ be an ultrafilter such that $gFg^{-1} \neq F$.
Then there exists an idempotent $e \in F$ such that $e \perp geg^{-1}$.

\item  Let $F \subseteq E(S)$ be an ultrafilter such that $\sigma (g) \in F$.
For each $f \in F$ where $f \leq \sigma (g)$, 
there exists $0 \neq e \leq f$ such that $e \perp geg^{-1}$.

\end{enumerate}
\end{lemma}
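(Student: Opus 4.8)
The plan is to prove (1) by a direct computation inside the Boolean algebra $E(S)$, and then to derive (2) from (1) by a density argument using Proposition~\ref{prop:labor}.

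For part (1), the key point is that conjugation $e \mapsto geg^{-1}$ by the unit $g$ is an order-preserving, complement-preserving automorphism of the Boolean algebra $E(S)$. Since $F$ and $gFg^{-1}$ are distinct ultrafilters of a Boolean algebra, they are incomparable, so I may choose an idempotent $a \in F$ with $a \notin gFg^{-1}$. Because conjugation is an automorphism, $a \notin gFg^{-1}$ is equivalent to $g^{-1}ag \notin F$, and as $F$ is an ultrafilter this forces $\overline{g^{-1}ag} \in F$. I then set $e = a \wedge \overline{g^{-1}ag}$, which lies in $F$ and is therefore nonzero. Applying the conjugation automorphism and using that it preserves complements gives $geg^{-1} = gag^{-1} \wedge \overline{a}$, whence $e \wedge geg^{-1} \leq a \wedge \overline{a} = 0$. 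Since $e$ and $geg^{-1}$ are idempotents, $e \wedge geg^{-1} = 0$ is precisely $e \perp geg^{-1}$, as required.

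For part (2), I would invoke Proposition~\ref{prop:labor}, which identifies $U_{\sigma(g)}$ with the closure of the set $Y = \{ G \in \mathsf{X}(S) \colon gGg^{-1} \neq G \}$. Since $\sigma(g) \in F$ we have $F \in U_{\sigma(g)} = \mathsf{cl}(Y)$, so every basic open neighbourhood of $F$ meets $Y$. Given $f \in F$ with $f \leq \sigma(g)$, the set $U_f$ is such a neighbourhood, so I may pick $G \in U_f \cap Y$, that is, an ultrafilter $G$ with $f \in G$ and $gGg^{-1} \neq G$. Applying part (1) to $G$ produces an idempotent $e' \in G$ with $e' \perp ge'g^{-1}$. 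Setting $e = e' \wedge f$, I obtain $e \in G$ (hence $e \neq 0$) and $e \leq f$, while monotonicity of conjugation gives $geg^{-1} \leq ge'g^{-1}$, so that $e \wedge geg^{-1} \leq e' \wedge ge'g^{-1} = 0$, giving $e \perp geg^{-1}$.

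The filter-theoretic manipulations are routine; the one computational point needing care is the identity $g\,\overline{g^{-1}ag}\,g^{-1} = \overline{a}$ in part (1), i.e. confirming that conjugation by a unit really is a complement-preserving automorphism of $E(S)$, since this is exactly what collapses $e \wedge geg^{-1}$ onto $a \wedge \overline{a} = 0$. I expect the main conceptual obstacle to be the passage in part (2) from the closure description of $U_{\sigma(g)}$ to an actual witnessing ultrafilter $G$ lying in the neighbourhood $U_f$ and satisfying $gGg^{-1} \neq G$; once such a $G$ is produced, intersecting its witness with $f$ immediately delivers the required nonzero $e$ below $f$.
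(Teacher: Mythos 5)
Your proof is correct. Part (2) is essentially the paper's argument: both invoke Proposition~\ref{prop:labor} to find an ultrafilter $G \in U_{f}$ with $gGg^{-1} \neq G$ and then feed $G$ into part (1), cutting the resulting witness down by $f$. Part (1), however, takes a genuinely different (and arguably cleaner) route. The paper first separates the distinct ultrafilters $F$ and $gFg^{-1}$ by disjoint clopen sets $U_{e}$ and $U_{f}$ using Hausdorffness of the structure space, and then applies the characterization of ultrafilters (Lemma~\ref{lem:exel}) to extract an element $gig^{-1} \in gFg^{-1}$ orthogonal to $e$, finally taking $j = ie$. You instead work entirely inside the Boolean algebra $E(S)$: you observe that $e \mapsto geg^{-1}$ is a complement-preserving automorphism, pick $a \in F \setminus gFg^{-1}$ (which exists since distinct ultrafilters are incomparable), note $\overline{g^{-1}ag} \in F$, and set $e = a \wedge \overline{g^{-1}ag}$, after which $e \wedge geg^{-1} \leq a \wedge \overline{a} = 0$ is immediate. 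The two arguments are of course two faces of the same Stone-duality fact --- Hausdorff separation of points of $\mathsf{X}(S)$ is exactly the statement that distinct ultrafilters of a Boolean algebra contain orthogonal elements --- but yours avoids any appeal to the topology and makes the witnessing idempotent completely explicit, while the paper's version fits more uniformly with the topological language used throughout that section. The one computation your version leans on, namely $g\,\overline{g^{-1}ag}\,g^{-1} = \overline{a}$, is justified exactly as you say: conjugation by a unit preserves meets, joins (by distributivity), $0$ and $1$, hence complements.
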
 
\begin{proof} (1) Since the ultrafilters $F$ and $gFg^{-1}$ are distinct and the structure space is Hausdorff,
there exist non-zero idempotents $e \perp f$ such that $F \in U_{e}$ and $gFg^{-1} \in U_{f}$.
Since $e \notin gFg^{-1}$, there exists $gig^{-1} \in gFg^{-1}$ such that $e (gig^{-1}) = 0$.
Put $j = ie$.
Then $j \in F$ and $j(gjg^{-1}) = 0$.

(2) Let $f \in F$ where $f \leq \sigma (s)$.
Then by Proposition~\ref{prop:labor}, 
the open set $U_{f}$ contains an element $G$ such that $gGg^{-1} \neq G$.
Thus by (1), there is an idempotent $e \in G$, which can also be chosen to satisfy $e \leq f$, such that $e(geg^{-1}) = 0$.
\end{proof}

\subsection{The principal case}

Recall that a groupoid $G$ is said to be {\em principal} if $g,h \in G$ such that $g^{-1}g = h^{-1}h$ and $gg^{-1} = hh^{-1}$ imply that $g = h$.
We now have our next main result.
Observe that since $\phi (a) \leq a$ we have that $\phi (a) \leq a^{-1}a,aa^{-1}$.

\begin{proposition}\label{prop:two} Let $S$ be a Boolean inverse $\wedge$-monoid.
Then $\mathsf{G}(S)$ is principal if, and only if, 
$$U_{\phi (s)} = \{ F \colon F \in U_{s^{-1}s} \mbox{ and } sFs^{-1}  \subseteq F   \},$$
for all $s \in S$.
\end{proposition}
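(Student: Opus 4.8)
The plan is to prove both directions by translating the principality condition into the language of ultrafilters and exploiting the established dictionary between idempotents $e \leq s^{-1}s$ and the open sets $U_e$. The key structural fact, available from Lemma~\ref{lem:dream}, is that for $F \in U_{s^{-1}s}$ the condition $sFs^{-1} \subseteq F$ says precisely that $F$ is the \emph{unique} ultrafilter containing $sFs^{-1}$, equivalently that the ultrafilter $A = (sF^{\uparrow})^{\uparrow}$ satisfies $\mathbf{d}(A) = \mathbf{r}(A) = F^{\uparrow}$, i.e.\ $A \in \mathrm{Iso}(\mathsf{G}(S))$. So the right-hand set is exactly $\{F \in U_{s^{-1}s} : (sF^{\uparrow})^{\uparrow} \text{ is an isotropy ultrafilter}\}$. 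The task is then to show this set equals $U_{\phi(s)}$ if and only if $\mathsf{G}(S)$ is principal.

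First I would establish the inclusion $U_{\phi(s)} \subseteq \{F : sFs^{-1} \subseteq F\}$, which holds unconditionally: since $\phi(s) \leq s$, any $F$ containing $\phi(s)$ contains an idempotent below $s$, so $s \in F^{\uparrow}$, whence $sF^{\uparrow} \subseteq F^{\uparrow}$ and $A = (sF^{\uparrow})^{\uparrow}$ is idempotent, giving $sFs^{-1} \subseteq F$. Thus the displayed equation reduces to the \emph{reverse} inclusion, and the whole proposition amounts to: $\mathsf{G}(S)$ is principal $\iff$ for every $s$ and every $F \in U_{s^{-1}s}$ with $(sF^{\uparrow})^{\uparrow}$ idempotent, we have $\phi(s) \in F$.

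For the forward direction, assume $\mathsf{G}(S)$ is principal and take $F$ with $A=(sF^{\uparrow})^{\uparrow}$ idempotent, so $A = F^{\uparrow}$ and hence $s \in F^{\uparrow}$, meaning some $f \in F$ satisfies $f \leq s$; then $f \leq \phi(s)$ by (FPO2), so $\phi(s) \in F$. Here principality is what forces $A = \mathbf{d}(A) = F^{\uparrow}$ rather than merely $\mathbf{d}(A)=\mathbf{r}(A)=F^{\uparrow}$: an isotropy ultrafilter with trivial local group must be its own base identity. For the converse, I would argue contrapositively: if $\mathsf{G}(S)$ is not principal, there is a non-identity ultrafilter $A$ with $\mathbf{d}(A) = \mathbf{r}(A) = F^{\uparrow}$; choosing $s \in A$ gives $F \in U_{s^{-1}s}$ with $sFs^{-1} \subseteq F$ yet $\phi(s) \notin F$ (since $A \neq F^{\uparrow}$ means $s$ does not lie above any idempotent of $F$), violating the displayed equality.

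The step I expect to be the main obstacle is pinning down, in the converse, why $A \neq F^{\uparrow}$ translates into $\phi(s) \notin F$ for a suitably chosen representative $s \in A$. The subtlety is that $\phi(s) \in F$ would force $s$ to lie above an idempotent of $F$, which by the uniqueness in Lemma~\ref{lem:dream}(2) would collapse $A$ back onto $F^{\uparrow}$; making this precise requires carefully using that $A$ is a \emph{non-idempotent} ultrafilter over the identity $F^{\uparrow}$, so that no element of $A$ can sit above an element of $F$. Handling this cleanly, rather than the routine filter manipulations, is where the argument must be most careful.
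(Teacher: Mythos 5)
Your proposal is correct and follows essentially the same route as the paper: both rest on the unconditional inclusion $U_{\phi(s)} \subseteq \{F : sFs^{-1} \subseteq F\}$, the identification (via Lemma~\ref{lem:dream}) of the right-hand set with those $F$ for which $(sF^{\uparrow})^{\uparrow}$ is an isotropy ultrafilter, and the observation that $\phi(s) \in F$ is exactly what collapses that ultrafilter onto $F^{\uparrow}$. The only cosmetic difference is that you prove the direction (equality $\Rightarrow$ principal) contrapositively from a single nontrivial isotropy ultrafilter, whereas the paper argues directly with two ultrafilters $A,B$ sharing domain and range and applies the hypothesis to $s = a^{-1}b$; these are the same argument reorganized.
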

\begin{proof} Suppose first that 
$$U_{\phi (s)} = \{ F \colon F \in U_{s^{-1}s} \mbox{ and }  sFs^{-1}  \subseteq F   \},$$
for all $s \in S$.
Let $A,B \in \mathsf{G}(S)$ such that 
$\mathbf{d}(A) = \mathbf{d}(B)$ 
and
$\mathbf{r}(A) = \mathbf{r}(B)$.
Let $\mathbf{d}(A) = \mathbf{d}(B) = F^{\uparrow}$ where $F \subseteq E(S)$ is an ultrafilter.
Then 
$A = (aF)^{\uparrow}$ for any $a \in A$,
and
$B = (bF)^{\uparrow}$ for any $b \in B$.
By assumption, $(aFa^{-1})^{\uparrow} = (bFb^{-1})^{\uparrow}$.
It is easy to check that $a^{-1}bF (a^{-1}b)^{-1} \subseteq F$,
and that $b^{-1}aa^{-1}b \in F$.
Thus, by assumption, $\phi (a^{-1}b)  \in F$.
It follows that $a^{-1}b \in F^{\uparrow}$.
Hence $aa^{-1}b \in A$ and so $b \in A$.
But then it is immediate that $A = B$.

To prove the converse, assume that $\mathsf{G}(S)$ is principal.
It is enough to prove that 
$$ \{ F \colon F \in U_{s^{-1}s} \mbox{ and } sFs^{-1}  \subseteq F   \}
\subseteq
U_{\phi (s)}.$$ 
Let $sFs^{-1} \subseteq F$.
Put $A = (sF)^{\uparrow}$.
Then $\mathbf{d}(A) = F^{\uparrow}$ and $\mathbf{r}(A) = F^{\uparrow}$.
It follows that $A = F^{\uparrow}$.
Thus there is an idempotent $f \in F$ such that $f \leq s$.
But then $f \leq \phi (f)$ and so $\phi (f) \in F$, as required.
\end{proof}

We shall now translate the above result into an internal condition on $S$.
We adapt the idea contained in the first paragraphs of \cite[Section 7]{Kumjian}.

\begin{lemma}\label{lem:mirren} Let $S$ be a Boolean inverse $\wedge$-monoid.
Then $\mathsf{G}(S)$ is principal if, and only if, for each $s \in S$ we have that
$\sigma (s) = \bigvee_{i=1}^{m} e_{i}$ where $e_{i}se_{i} = 0$ for $1 \leq i \leq e$. 
\end{lemma}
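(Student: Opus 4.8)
The plan is to read off principality from the reformulation already available in Proposition~\ref{prop:two} together with the orthogonal decomposition $s^{-1}s = \phi(s) \vee \sigma(s)$ of Lemma~\ref{lem:cooper}. Since $\phi(s) \perp \sigma(s)$ and every ultrafilter of $E(S)$ is prime (Lemma~\ref{lem:molina}), the clopen set $U_{s^{-1}s}$ splits as the disjoint union $U_{\phi(s)} \sqcup U_{\sigma(s)}$. One checks directly that the inclusion $U_{\phi(s)} \subseteq \{F \in U_{s^{-1}s} : sFs^{-1} \subseteq F\}$ always holds: if $\phi(s) \in F$ and $f \in F$, then $\phi(s)f \leq s$ is an idempotent, so $s(\phi(s)f)s^{-1} = \phi(s)f \in F$ and hence $sfs^{-1} \in F$. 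Consequently the criterion of Proposition~\ref{prop:two} becomes the statement that, for every $s$, no $F \in U_{\sigma(s)}$ satisfies $sFs^{-1} \subseteq F$; that is, $\theta_s$ has \emph{no fixed point} on its support $U_{\sigma(s)}$. The whole lemma will thus follow once I translate this fixed-point-free condition into the algebraic statement $\sigma(s) = \bigvee_{i=1}^{m} e_i$ with $e_i s e_i = 0$.

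The bridge between the two formulations is the observation that, for an idempotent $e \leq s^{-1}s$, one has $ese = 0$ if and only if $e \wedge ses^{-1} = 0$, because $\mathbf{r}(se) = ses^{-1}$ and left multiplication by $e$ annihilates $se$ exactly when $e$ meets this range idempotent trivially. For the direction \textbf{(covering $\Rightarrow$ principal)}, suppose $\sigma(s) = \bigvee_{i=1}^{m} e_i$ with each $e_i s e_i = 0$, and suppose for contradiction that some $F \in U_{\sigma(s)}$ is a fixed point. Primeness of $F$ forces some $e_i \in F$, and since $F = \mathbf{d}((sF)^{\uparrow}) = \mathbf{r}((sF)^{\uparrow})$ contains $e_i$ on both sides, a short filter computation shows $e_i s e_i \in (sF)^{\uparrow}$, so $e_i s e_i \neq 0$, a contradiction.

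For the direction \textbf{(principal $\Rightarrow$ covering)}, fix $s$ and let $F \in U_{\sigma(s)}$. By the reformulation above $sFs^{-1} \nsubseteq F$, so there is $f \in F$ with $f \leq s^{-1}s$ and $sfs^{-1} \notin F$; as $F$ is an ultrafilter of the Boolean algebra $E(S)$ this gives $\overline{sfs^{-1}} \in F$. Setting $e_F = f \wedge \overline{sfs^{-1}} \wedge \sigma(s)$ produces an idempotent in $F$ lying below $\sigma(s)$, and the inequalities $e_F \leq f$ and $e_F \leq \overline{sfs^{-1}}$ yield $e_F \wedge s e_F s^{-1} \leq e_F \wedge sfs^{-1} = 0$, whence $e_F s e_F = 0$ by the bridge observation. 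The sets $U_{e_F}$ thus cover the clopen subset $U_{\sigma(s)}$ of the compact structure space $\mathsf{X}(S)$; extracting a finite subcover $U_{e_1}, \ldots, U_{e_m}$ and using $U_{e_i} \subseteq U_{\sigma(s)}$ together with Lemma~\ref{lem:pele}(4) gives $U_{\bigvee e_i} = U_{\sigma(s)}$, hence $\sigma(s) = \bigvee_{i=1}^{m} e_i$, as required.

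The main obstacle is conceptual rather than computational: the condition $e s e = 0$ is genuinely \emph{stronger} than the mere absence of fixed points in $U_e$, since it forbids every arrow of $V_s$ whose domain- and range-bases both meet $U_e$, not only those based at a single point. The construction above reconciles this asymmetry by localizing far enough---intersecting with $\overline{sfs^{-1}}$---so that the stronger orthogonality $e_F \wedge s e_F s^{-1} = 0$ actually holds; the remaining care is simply to keep each $e_F$ below $\sigma(s)$ so that the finite join recovers $\sigma(s)$ exactly rather than some smaller idempotent.
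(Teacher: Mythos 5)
Your proof is correct and follows essentially the same route as the paper: both directions are reduced to Proposition~\ref{prop:two}, the forward implication uses primeness of ultrafilters to locate some $e_i \in F$ and derive $e_i s e_i \neq 0$, and the converse builds an idempotent $e_F \in F$ with $e_F s e_F = 0$ for each $F \in U_{\sigma(s)}$ and extracts a finite subcover by compactness. The only differences are cosmetic (you make the equivalence $ese = 0 \Leftrightarrow e \wedge ses^{-1} = 0$ explicit and cut each $e_F$ down by $\sigma(s)$, where the paper instead verifies the reverse inclusion $\bigcup_{e \in L} U_e \subseteq U_{\sigma(s)}$ separately).
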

\begin{proof} Suppose first that $\mathsf{G}(S)$ is principal.
Let $s \in S$.
Define 
$$L = \{e \leq s^{-1}s \colon 0 = ese \}.$$
We prove that
$$U_{\sigma (s)} = \bigcup_{e \in L} U_{e}.$$
Let $F \in U_{\sigma (s)}$.
Then $s^{-1}s \in F$ and so $sFs^{-1} \neq F$ by Proposition~\ref{prop:two}.
It follows that there exist $e,f \in F$ such that
$e (sfs^{-1}) = 0$.
Put $i = ef s^{-1}s $.
Then $i \in F$, $i \leq s^{-1}s$ and $i (sis^{-1}) = 0$.
It follows that $isi = 0$
since $i \leq e$ and $sis^{-1} \leq sfs^{-1}$.
We have shown that $i \in L$ and $F \in U_{i}$.
Thus the lefthandside is contained in the righthandside.
The reverse inclusion holds since if $e \in F$ where $e \in L$ and $\phi (s) \in F$
then $s \in F^{\uparrow}$ and so $0 = ese \in F^{\uparrow}$, which is a contradiction.
By compactness, we may assume thate $L$ is finite.
The result now follows.

To prove the converse, suppose that 
$\sigma (s) = \bigvee_{i=1}^{m} e_{i}$ where $e_{i}se_{i} = 0$ for $1 \leq i \leq e$. 
Let $F \in U_{s^{-1}s}$ such that $sFs^{-1} \subseteq F$.
Suppose that $\phi (s) \notin F$.
Then $\sigma (s) \in F$.
But $F$ is an ultrafilter and so a prime filter.
It follows that $e_{i} \in F$ for some $i$.
Then $se_{i}s^{-1} \in F$ and so $0 = e_{i}se_{i}s^{-1} \in F$, which is a contradiction.
Thus the result follows by Proposition~\ref{prop:two}.
\end{proof}

\begin{remark} {\em Boolean inverse $\wedge$-monoids that satisfy the above condition are the analogues of 
those inverse semigroups that act {\em relatively freely} \cite[Chapter 1, Proposition 2.13]{Renault}.}
\end{remark}

We shall restate the above lemma in a more striking form.

\begin{proposition}\label{prop:infinitesimal} Let $S$ be a Boolean inverse $\wedge$-monoid.
Then $\mathsf{G}(S)$ is principal if, and only if, for each $s \in S$ we have that $s = e \vee s_{1} \vee \ldots \vee s_{m}$
where $e$ is an idempotent, 
each $s_{i}$ is an infinitesimal for $1 \leq i \leq m$,
and $e \perp (s_{1} \vee \ldots \vee s_{m})$.
\end{proposition}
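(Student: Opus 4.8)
The plan is to prove Proposition~\ref{prop:infinitesimal} by showing it is equivalent to Lemma~\ref{lem:mirren}, which has already characterized principality via the condition $\sigma(s) = \bigvee_{i=1}^{m} e_i$ with $e_i s e_i = 0$. The key observation is that the support decomposition provided by Lemma~\ref{lem:mirren} can be converted into the claimed decomposition of $s$ itself using the orthogonal splitting $s = \phi(s) \vee s\sigma(s)$ from Lemma~\ref{lem:cooper}. The idempotent $e$ in the statement should be $\phi(s)$, and the infinitesimal pieces $s_i$ should be the restrictions $s e_i = e_i s e_i' $-type fragments of $s\sigma(s)$ cut out by the idempotents $e_i$ furnished by Lemma~\ref{lem:mirren}.

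First I would establish the forward direction. Assuming $\mathsf{G}(S)$ is principal, Lemma~\ref{lem:mirren} gives $\sigma(s) = \bigvee_{i=1}^{m} e_i$ with $e_i s e_i = 0$ and each $e_i \leq s^{-1}s$. By Lemma~\ref{lem:cooper} we have the orthogonal join $s = \phi(s) \vee s\sigma(s)$. The plan is to set $s_i = s e_i$ and verify three things. That $s_i$ is an infinitesimal follows since $s_i^2 = s e_i s e_i = 0$ because $e_i s e_i = 0$ — this is exactly where the Lemma~\ref{lem:mirren} condition is used, and I would lean on Lemma~\ref{lem:spooks}(2) to confirm $s_i^2 = 0$ characterizes infinitesimality. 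Next, $s\sigma(s) = \bigvee_{i=1}^{m} s e_i$ follows from distributivity applied to $\sigma(s) = \bigvee e_i$ (right-multiplication by $s$ distributes over the join since $S$ is distributive). Finally $\phi(s) \perp s\sigma(s)$ is already recorded in Lemma~\ref{lem:cooper}, giving the orthogonality $e \perp (s_1 \vee \cdots \vee s_m)$ with $e = \phi(s)$. One technical point to check is that the $s_i$ are pairwise compatible so that the join genuinely exists; since each $s_i \leq s$, they are all bounded above by $s$ and hence pairwise compatible, so the join exists by the remark following Lemma~\ref{le: meets}.

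For the converse, suppose every $s$ decomposes as $s = e \vee s_1 \vee \cdots \vee s_m$ with $e$ idempotent, each $s_i$ infinitesimal, and $e \perp (s_1 \vee \cdots \vee s_m)$. I would aim to recover the Lemma~\ref{lem:mirren} condition. Applying the support operator and using that $\phi(s)$ picks out the idempotent part below $s$, one identifies $e = \phi(s)$ (using (FPO1) and (FPO2): $e$ is an idempotent beneath $s$, and the orthogonal complement structure forces it to be the \emph{maximal} such). Then $\sigma(s) = \overline{\phi(s)} s^{-1}s$ should work out to $\bigvee_i \mathbf{d}(s_i)$, and for each infinitesimal $s_i$ we have $\mathbf{d}(s_i) \perp \mathbf{r}(s_i)$ by Lemma~\ref{lem:spooks}(2), which yields $\mathbf{d}(s_i)\, s\, \mathbf{d}(s_i) = 0$ after checking that $s$ restricted to $\mathbf{d}(s_i)$ agrees with $s_i$. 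Setting $e_i = \mathbf{d}(s_i)$ then verifies the hypothesis of Lemma~\ref{lem:mirren}, whence $\mathsf{G}(S)$ is principal.

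The main obstacle I anticipate is the converse direction, specifically pinning down that $e = \phi(s)$ and that $\sigma(s) = \bigvee_i \mathbf{d}(s_i)$ \emph{exactly}, rather than merely establishing inequalities. The decomposition $s = e \vee s_1 \vee \cdots \vee s_m$ is not a priori unique, so I must argue that the idempotent part $e$ is forced to coincide with the canonical fixed-point idempotent $\phi(s)$; this requires showing $e$ is the largest idempotent below $s$, which should follow from the orthogonality $e \perp \bigvee s_i$ together with the fact that each $s_i$ contributes no idempotent below $s$ (since $\phi(s_i) = 0$ for an infinitesimal, as $s_i$ has orthogonal domain and range). The cleanest route is probably to compute $\phi$ of both sides using that $\phi$ distributes suitably over orthogonal joins and vanishes on infinitesimals, reducing everything to the already-proved Lemma~\ref{lem:mirren}.
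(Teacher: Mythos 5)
Your proposal is correct and follows essentially the same route as the paper: the forward direction combines Lemma~\ref{lem:mirren} with the splitting $s = \phi(s) \vee s\sigma(s)$ of Lemma~\ref{lem:cooper} and sets $s_{i} = se_{i}$, and the converse identifies $e = \phi(s)$ by observing that any idempotent $f \leq s$ decomposes via Lemma~\ref{lem:meets_and_joins} into pieces $f \wedge s_{i}$ that vanish because no non-zero idempotent lies below an infinitesimal, after which $e_{i} = \mathbf{d}(s_{i})$ recovers the condition of Lemma~\ref{lem:mirren}. The only difference is cosmetic: the paper verifies $e_{i}se_{i} = 0$ directly from $s_{i} = se_{i}$ and $s_{i}^{2} = 0$ rather than via the orthogonality of $\mathbf{d}(s_{i})$ and $\mathbf{r}(s_{i})$.
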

\begin{proof} Suppose first that $\mathsf{G}(S)$ is principal.
Then by Lemma~\ref{lem:mirren}, 
for each $s \in S$ we have that
$\sigma (s) = \bigvee_{i=1}^{m} e_{i}$ where $e_{i}se_{i} = 0$ for $1 \leq i \leq e$. 
By Lemma~\ref{lem:cooper}, we may write
$$s = \phi (s) \vee se_{1} \vee \ldots \vee se_{m}.$$
Now $e_{i}se_{i} = 0$  which gives $(se_{i})^{2} = 0$, and so the $se_{i}$ are infinitesimals.

We now prove the converse.
We assume that each element can be written in the stated form and deduce that $\mathsf{G}(S)$ is principal.
We use Lemma~\ref{lem:mirren}.
For each $s \in S$ we have that $s = e \vee s_{1} \vee \ldots \vee s_{m}$
where $e$ is an idempotent and each $s_{i}$ is an infinitesimal for $1 \leq i \leq m$. 
Let $f \leq s$ be any idempotent.
Then $f = f \wedge s$ and so $f = (f \wedge e) \vee (f \wedge s_{1}) \vee \ldots \vee (f \wedge s_{m})$
by Lemma~\ref{lem:meets_and_joins}.
It follows that each $f \wedge s_{i}$ is an idempotent less than an infinitesimal and so must be 0.
It follows that $f \leq e$.
We have proved that $e = \phi (s)$.
It now readily follows that $s\sigma (s) = s_{1} \vee \ldots \vee s_{m}$.
Thus
$\sigma (s) = s_{1}^{-1}s_{1} \vee \ldots \vee s_{m}^{-1}s_{m} \leq s^{-1}s$.
Put $e_{i} = s_{i}^{-1}s_{i}$.
Then $s_{i} = se_{i}$.
Then $se_{i}se_{i} = 0$ and so $e_{i}se_{i} = 0$, as required.
\end{proof}

\begin{remark}{\em To understand the significance of Proposition~\ref{prop:infinitesimal}, we begin with the following quote from \cite[page 3]{Renault}:
\begin{quote} 
``From our point of view, the most interesting groupoids are principal groupoids.
Their $C^{\ast}$-algebras appear as genuine generalizations of matrix algebras.''
\end{quote}
We may translate this into the language of Boolean inverse monoids.
Observe that in a finite symmetric inverse monoid $I(X)$ every element may be written as a finite orthogonal join
of an idempotent and infinitesimals.
To see why, note that partial bijections of the form $x \mapsto y$, where $x,y \in X$ and $x \neq y$, are infinitesimals,
and that the partial bijections of the form $x \mapsto x$ are idempotents.
Clearly, each partial bijection of $X$ can be written as an orthogonal union of an idempotent and infinitesimals.
This agrees with the above result because the associated groupoid is principal.
Thus Boolean inverse $\wedge$-monoids where $\mathsf{G}(S)$ is a principal groupoid 
may be regarded as direct generalizations of finite symmetric inverse monoids.
There is no terminology for Boolean inverse $\wedge$-monoids of this type,
so we shall refer to them as {\em principal} as well.
It can be verified directly, by showing that an element centralizes the idempotents only if it is itself an idempotent,
that every principal Boolean inverse $\wedge$-monoid is fundamental.}
\end{remark}

\subsection{The $0$-simplifying case}

Let $G$ be a groupoid with set of identities $G_{o}$.
We say that two identities $e$ and $f$ are {\em related} if there is an element $g \in G$ such that
$e \stackrel{g}{\rightarrow} f$.
This is an equivalence relation on the set $G_{o}$ and the equivalence classes are called {\em $G$-orbits}.
The $G$-orbit containing $e$ is denoted by $G(e)$.
A subset of $G_{o}$ is called an {\em invariant set} if it is a union of $G$-orbits.
Both $\emptyset$ and $G_{o}$ are invariant sets called the {\em trivial invariant sets}.
We say that two elements $g,h \in G$ are related if their identities $g^{-1}g$ and $h^{-1}h$ are related.
This relation is an equivalence relation.
Its equivalence classes are called {\em connected components}.
An {\em invariant subset} of $G$ is any union of connected components.
Lenz \cite{Lenz} remarks that the equivalence of (2) and (3) below is well-known in the \'etale case.

\begin{lemma}\label{lem:oinky} Let $G$ be an \'etale groupoid. 
Then the following are equivalent.
\begin{enumerate}

\item Every $G$-orbit is a dense subset of $G_{o}$.

\item Any non-empty invariant subset of $G_{o}$ is dense in $G_{o}$.

\item There are no non-trivial open invariant subsets of $G_{o}$.

\item There are no non-trivial open invariant subsets of $G$.

\end{enumerate}
\end{lemma}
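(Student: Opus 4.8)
The plan is to prove the cycle $(1)\Rightarrow(2)\Rightarrow(3)\Rightarrow(1)$ and then the separate equivalence $(3)\Leftrightarrow(4)$. The only genuine input from the hypothesis that $G$ is \'etale is that $\mathbf{d}$ and $\mathbf{r}$, being local homeomorphisms, are continuous \emph{open} maps; everything else is bookkeeping with orbits and connected components, which partition $G_{o}$ and $G$ respectively. In particular, the complement in $G_{o}$ of a union of orbits is again a union of orbits, and similarly for $G$, so invariance passes to complements for free.

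The implication $(1)\Rightarrow(2)$ is immediate: a non-empty invariant subset $A$ of $G_{o}$ is a union of orbits, hence contains some orbit $G(e)$, which is dense by $(1)$, so $A$ is dense. For $(2)\Rightarrow(3)$ I would argue by contradiction. If $U$ were a non-trivial open invariant subset of $G_{o}$, then $G_{o}\setminus U$ is non-empty, invariant, and closed; by $(2)$ it is dense, whence $G_{o}\setminus U = G_{o}$, contradicting $U\neq\emptyset$.

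The step carrying the real content is $(3)\Rightarrow(1)$, and this is where the \'etale hypothesis is used. For any open $U\subseteq G_{o}$ I would form its saturation $[U]=\mathbf{r}(\mathbf{d}^{-1}(U))$, the union of all orbits meeting $U$. Since $\mathbf{d}$ is continuous, $\mathbf{d}^{-1}(U)$ is open in $G$, and since $\mathbf{r}$ is an open map, $[U]$ is open in $G_{o}$; it is visibly invariant and contains $U$. Given an orbit $G(e)$ and any non-empty open $U$, applying $(3)$ to the non-empty open invariant set $[U]$ forces $[U]=G_{o}$, so $e\in[U]$; that is, $e$ is related to a point of $U$, i.e. $G(e)\cap U\neq\emptyset$. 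Thus $G(e)$ meets every non-empty open set and is dense, which is $(1)$.

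Finally, for $(3)\Leftrightarrow(4)$ I would set up the correspondence $A\mapsto\mathbf{d}^{-1}(A)$ and $B\mapsto B\cap G_{o}$ between invariant subsets of $G_{o}$ and of $G$. One checks that $\mathbf{d}^{-1}(A)=\mathbf{r}^{-1}(A)$ when $A$ is invariant, that it is a union of connected components, that $\mathbf{d}^{-1}(A)\cap G_{o}=A$, and that openness of $A$ matches openness of $\mathbf{d}^{-1}(A)$ via continuity of $\mathbf{d}$. The point that needs care is preservation of non-triviality in both directions: $B\cap G_{o}$ is non-empty whenever $B$ is, because any $g\in B$ lies in the same connected component as $\mathbf{d}(g)\in G_{o}$; and $B\cap G_{o}=G_{o}$ forces $B=G$, since every connected component of $G$ contains an identity. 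Translating a non-trivial open invariant set through the correspondence produces one on the other side, giving both directions by contraposition. I expect this non-triviality bookkeeping, rather than any deep topology, to be the main place where slips could occur.
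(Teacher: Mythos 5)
Your proposal is correct and follows essentially the same route as the paper: the cycle $(1)\Rightarrow(2)\Rightarrow(3)\Rightarrow(1)$ with the saturation $\mathbf{r}(\mathbf{d}^{-1}(U))$ (which is the paper's $\mathbf{r}(GU)$) carrying the \'etale input, followed by the correspondence between invariant subsets of $G_{o}$ and of $G$ for $(3)\Leftrightarrow(4)$. The only cosmetic difference is in $(2)\Rightarrow(3)$, where you pass to the closed invariant complement $G_{o}\setminus U$ while the paper works with the orbit of a point outside $U$; both amount to the same observation.
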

\begin{proof}
(1)$\Rightarrow$(2). 
This is immediate since a non-empty invariant set is a union of $G$-orbits.

(2)$\Rightarrow$(3).  Let $U \subseteq G_{o}$ be a non-empty open invariant subset.
Suppose that $G_{o} \neq U$.
Let $e \in G_{o} \setminus U$.
Then $G(e) \subseteq  G_{o} \setminus U$ since if $G(e)$ had a non-empty intersection with $U$, 
it would be wholly contained within $U$.
However, by (2), since $G(e)$ is an invariant subset of $G_{o}$,
it is also a dense subset.
It follows that $U \cap G(e) \neq \emptyset$, which is a contradiction.

(3)$\Rightarrow$(1). Let $G(e)$ be the $G$-orbit containing $e \in G_{o}$.
Suppose that $G(e)$ is not dense in $G_{o}$.
Then there exists a non-empty open set $U \subseteq G_{o}$ such that $G(e) \cap U = \emptyset$.
Since $G$ is \'etale, the set $GU$ is open.
Thus $V = \mathbf{r}(GU)$ is open and contains $U$.
It is evidently an invariant subset.
Thus, by assumption, $V = G_{o}$.
Hence $G_{o} =  \mathbf{r}(GU)$. 
It follows that $G(e) \cap U \neq \emptyset$, which is a contradiction. 

(3)$\Rightarrow$(4). Let $U \subseteq G$ be a non-empty open invariant subset.
Then $U_{o} = U \cap G_{o}$ is a non-empty open invariant subset of $G_{o}$
with the property that $U = GU_{o}$.
By assumption, $U_{o} = G_{o}$.
Hence $U = G$, as required.

(4)$\Rightarrow$(3). Let $U \subseteq G_{o}$ be a non-empty open invariant subset of $G_{o}$.
Then $GU_{o}$ is a non-empty open invariant subset of $G$.
By assumption, $GU_{o} = G$.
Thus $U = G_{o}$, as required.
\end{proof}

We shall say that an \'etale groupoid is {\em minimal} if it satisfies any one of the equivalent conditions in Lemma~\ref{lem:oinky}. 
The following was sketched in \cite{Lawson12} and is a reformulation of a result in \cite{Lenz}.

\begin{proposition}\label{prop:sweep} Let $S$ be a Boolean inverse $\wedge$-monoid and let $\mathsf{G}(S)$ be its associated Boolean groupoid.
Then the groupoid $\mathsf{G}(S)$ is minimal if, and only if, $S$ is $0$-simplifying.
\end{proposition}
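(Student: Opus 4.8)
The plan is to establish the equivalence $\mathsf{G}(S)$ minimal $\iff$ $S$ is $0$-simplifying by linking the order-theoretic relation $\preceq$ on idempotents to the orbit structure of the groupoid, exploiting Lemma~\ref{lem:toby}, which already characterizes $0$-simplifying in terms of $\equiv$ being universal, and Lemma~\ref{lem:oinky}, which gives the internal characterizations of minimality. The key observation I would isolate first is a dictionary between pencils and orbits: the relation $e \preceq f$ for non-zero idempotents asserts the existence of elements $x_i$ with $e = \bigvee \mathbf{d}(x_i)$ and $\mathbf{r}(x_i) \leq f$, and under non-commutative Stone duality the clopen set $U_e$ of the structure space $\mathsf{X}(S) \cong \mathsf{G}(S)_o$ gets covered by the range-projections of the local bisections $V_{x_i}$. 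Thus $e \preceq f$ should translate into the statement that $U_e$ is contained in the saturation $\mathbf{r}(\mathsf{G}(S) \cdot U_f)$ of $U_f$ under the groupoid action.

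The concrete steps would be as follows. Using condition~(3) of Lemma~\ref{lem:oinky}, it suffices to show: every non-empty open invariant subset of $\mathsf{G}(S)_o$ equals $\mathsf{G}(S)_o$ if and only if $\equiv$ is universal. For the forward direction I would assume $\mathsf{G}(S)$ minimal and take arbitrary non-zero idempotents $e,f$; invariance and density of orbits force $U_e \subseteq \mathbf{r}(\mathsf{G}(S) \cdot U_f)$, and then a compactness argument (the $V_a$ form a basis of compact-open sets, and $U_e$ is compact) extracts finitely many compact-open local bisections $V_{x_1},\dots,V_{x_m}$ whose domains cover $U_e$ with ranges inside $U_f$. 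Passing back through Lemma~\ref{lem:pele}, which says $V_a \subseteq V_b \iff a \leq b$ and handles finite unions via joins, this finite cover is exactly a pencil witnessing $e \preceq f$, and by symmetry $e \equiv f$, so $\equiv$ is universal and $S$ is $0$-simplifying by Lemma~\ref{lem:toby}. For the converse, I would assume $S$ is $0$-simplifying, take a non-empty open invariant $U \subseteq \mathsf{G}(S)_o$, pick a basic clopen $U_f \subseteq U$ with $f \neq 0$, and for any other basic clopen $U_e$ use $e \preceq f$ (available since $\equiv$ is universal) to produce a pencil; the corresponding local bisections translate $U_f$ onto a cover of $U_e$, and invariance of $U$ forces $U_e \subseteq U$. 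Since the $U_e$ form a basis, $U = \mathsf{G}(S)_o$, giving condition~(3) and hence minimality.

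The main obstacle I anticipate is making the correspondence between pencils and the saturation $\mathbf{r}(\mathsf{G}(S) \cdot U_f)$ fully rigorous at the level of ultrafilters rather than merely at the level of idempotents. Specifically, the delicate point is verifying that the range-projection of the ultrafilter $(x_i F)^{\uparrow}$ (for $F$ an ultrafilter in $E(S)$ containing $\mathbf{d}(x_i)$) lands correctly, so that the set-theoretic orbit relation on $\mathsf{G}(S)_o$ matches the algebraic relation $\mathbf{d}(x_i) \stackrel{x_i}{\to} \mathbf{r}(x_i)$; this requires care with the constructions recorded just after Lemma~\ref{lem:molina}, where ultrafilters are built as $(aG)^{\uparrow}$. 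The compactness step is routine once the basis of compact-open local bisections is in hand, and the translation $V_a \subseteq V_b \iff a \leq b$ together with $V_a \cup V_b = V_{a \vee b}$ from Lemma~\ref{lem:pele} lets me move freely between finite unions of clopens and joins of idempotents. I would expect the whole argument to be short precisely because Lemma~\ref{lem:toby} has already done the heavy combinatorial work of relating $0$-simplifying to the relation $\equiv$, so the proof reduces to the topological re-encoding of pencils as orbit-closure statements.
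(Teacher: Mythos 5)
Your argument is correct, but it is organized differently from the paper's. The paper does not invoke Lemma~\ref{lem:toby} at all: instead it sets up an explicit order isomorphism between the lattice of $\vee$-ideals of $S$ and the lattice of open invariant subsets of the groupoid $\mathsf{G}(S)$, via $I \mapsto \mathsf{O}(I) = \bigcup_{s \in I} V_{s}$ and $U \mapsto \mathsf{I}(U) = \{ s \colon V_{s} \subseteq U \}$, and then reads off the proposition from condition~(4) of Lemma~\ref{lem:oinky}. You instead route everything through idempotents: Lemma~\ref{lem:toby} reduces $0$-simplifying to universality of $\equiv$, and you translate the pencil relation $e \preceq f$ into the topological statement that $U_{e}$ lies in the (open, by \'etaleness) saturation of $U_{f}$, using compactness of the clopen $U_{e}$ and Lemma~\ref{lem:pele} to pass between finite covers and joins; the one adjustment you should make explicit is replacing each $x_{i}$ by $x_{i}e$ so that the finite cover of $U_{e}$ becomes an exact join $e = \bigvee \mathbf{d}(x_{i}e)$, and likewise cutting down by $f$ on the range side. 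The "delicate point" you flag about $(x_{i}F)^{\uparrow}$ is already covered by the constructions recorded after Lemma~\ref{lem:molina}, so there is no real gap. What each approach buys: yours is shorter given that Lemma~\ref{lem:toby} has already been proved, and it makes the geometric meaning of a pencil transparent (it is exactly an orbit statement); the paper's is self-contained at this point and proves strictly more, namely a lattice isomorphism between $\vee$-ideals and open invariant subsets, which is the natural non-commutative Stone-duality statement of which minimality versus $0$-simplifying is just the degenerate case.
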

\begin{proof} We prove that there is an order isomorphism between the set of $\vee$-ideals of $S$ and the set of open invariant subsets of $\mathsf{G}(S)$.
The result will then follow by Lemma~\ref{lem:oinky}.

Let $I \subseteq S$ be a $\vee$-ideal.
Define 
$$\mathsf{O}(I) = \bigcup_{s \in I} V_{s}.$$
By construction, this is an open subset of $\mathsf{G}(S)$.
Let $A \in \mathsf{O}(I)$. 
Suppose that $B$ is an ultrafilter such that $B^{-1} \cdot B = A^{-1} \cdot A$.
We may write $B = (tA^{-1} \cdot A)^{\uparrow}$ for some $t$ where $t^{-1}t \in A^{-1} \cdot A$.
Observe that $ts^{-1}s \in B$.
But $ts^{-1}s \in I$.
It follows that $B \in \mathsf{O}(I)$.
Taken together with the dual result, we deduce that $\mathsf{O}(I)$ is an invariant subset.

Let $U \subseteq \mathsf{G}(S)$ be an open invariant subset.
Define
$$\mathsf{I}(U) = \{s \in S \colon V_{s} \subseteq U \}.$$  
Let $t \in S$ be any element and let $s \in \mathsf{I}(U)$.
We prove that $st \in \mathsf{I}(U)$.
Thus we need to show that $V_{st} \subseteq U$.
Let $A \in V_{st}$.
Then $st(st)^{-1} \in A \cdot A^{-1}$.
In particular, $ss^{-1} \in  A \cdot A^{-1}$.
Put $B = (A \cdot A^{-1} s)^{\uparrow} \in V_{s}$.
Thus $B \in U$.
But $A \cdot A^{-1} = B \cdot B^{-1}$ and so $B \in U$, since $U$ is an invariant subset.
It follows that $V_{st} \subseteq U$, as required.

It is now routine to check that the maps $I \mapsto \mathsf{O}(I)$ and $U \mapsto \mathsf{I}(U)$
are mutually inverse order-preserving maps.
\end{proof}

\subsection{The $0$-simple case}

Every $0$-simple Tarski monoid is a $0$-simplifying Tarski monoid.
We shall now investigate what the difference between these two classes is.

\begin{lemma}\label{lem:purely_infinite} Let $S$ be a $0$-simple Tarski monoid and let $e \in S$ be any non-zero idempotent.
Then we may find a pair of elements $x,y \in S$ such that $\mathbf{d}(x) = e = \mathbf{d}(y)$,
and $\mathbf{r}(x)$ and $\mathbf{r}(y)$ are orthogonal,
and $\mathbf{r}(x) \vee \mathbf{r}(y) \leq e$.
\end{lemma}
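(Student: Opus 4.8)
The plan is to exploit the two defining features of a $0$-simple Tarski monoid entirely separately: the atomlessness of the Tarski algebra $E(S)$ to carve out two disjoint ``slots'' inside $e$, and $0$-simplicity (in the form of Lemma~\ref{lem:helen}) to transport a full copy of $e$ into each slot. First I would split $e$. Since $E(S)$ is a Tarski algebra it is atomless, so the non-zero idempotent $e$ is not an atom and there exists $0 \neq f < e$. Setting $e_{1} = f$ and $e_{2} = e\bar{f}$ we obtain two non-zero idempotents with $e_{1} \wedge e_{2} = 0$ and $e_{1} \vee e_{2} = e$; note $e_{2} \neq 0$ precisely because the inequality $f < e$ is strict.

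Next I would apply Lemma~\ref{lem:helen} twice. Because $S$ is $0$-simple and $e, e_{1}, e_{2}$ are all non-zero idempotents, the characterization of $0$-simplicity yields idempotents $i_{1} \leq e_{1}$ and $i_{2} \leq e_{2}$ with $e \, \mathscr{D} \, i_{1}$ and $e \, \mathscr{D} \, i_{2}$. Unwinding the definition of $\mathscr{D}$, this means there exist elements $x, y \in S$ with $\mathbf{d}(x) = \mathbf{d}(y) = e$ and $\mathbf{r}(x) = i_{1}$, $\mathbf{r}(y) = i_{2}$. These are exactly the two elements we seek.

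It then remains only to read off the conclusions. Since $\mathbf{r}(x) = i_{1} \leq e_{1}$ and $\mathbf{r}(y) = i_{2} \leq e_{2}$ while $e_{1} \perp e_{2}$, orthogonality propagates downward and gives $\mathbf{r}(x) \perp \mathbf{r}(y)$; and as $E(S)$ is a Boolean algebra the join $\mathbf{r}(x) \vee \mathbf{r}(y) = i_{1} \vee i_{2}$ exists and satisfies $i_{1} \vee i_{2} \leq e_{1} \vee e_{2} = e$. I expect no genuine obstacle here: the entire content is the recognition that $0$-simplicity says precisely that $e$ $\mathscr{D}$-dominates, from below, \emph{any} prescribed non-zero idempotent. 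The only point requiring a little care is securing orthogonality of the two range idempotents, and the plan handles this by fixing the orthogonal splitting $e = e_{1} \vee e_{2}$ \emph{first} and placing each range idempotent beneath a separate piece, rather than attempting to disjointify $\mathbf{r}(x)$ and $\mathbf{r}(y)$ after they have been produced.
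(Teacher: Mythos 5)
Your proof is correct and follows essentially the same route as the paper's: use atomlessness to produce a strict $0 \neq f < e$, split $e$ orthogonally as $f \vee e\bar{f}$, and then apply the $0$-simplicity characterization (Lemma~\ref{lem:helen}) once for each piece to obtain $x$ and $y$ with domain $e$ and ranges beneath the two orthogonal parts. The only difference is that you spell out the final orthogonality and join checks that the paper leaves implicit.
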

\begin{proof} Since $S$ is atomless, there is a non-zero idempotent $f < e$.
From the fact that $S$ is $0$-simple, we may find an element $x$ such that
$e \stackrel{x}{\rightarrow} e_{1} \leq f < e$.
Similarly, we may find an element $y$ such that
$e \stackrel{y}{\rightarrow} e_{2} \leq e\overline{f} < e$.
\end{proof}

A non-zero idempotent $e$ is said to be {\em properly infinite} if 
we may find a pair of elements $x,y \in S$ such that $\mathbf{d}(x) = e = \mathbf{d}(y)$,
and $\mathbf{r}(x)$ and $\mathbf{r}(y)$ are orthogonal,
and $\mathbf{r}(x) \vee \mathbf{r}(y) \leq e$.
An inverse monoid is said to be {\em purely infinite} if every non-zero idempotent is properly infinite.
This terminology is generalized from \cite{Matui13}.

\begin{remark} Let $e$ be a properly infinite idempotent in the inverse monoid $S$.
Then there is a monoid homomorphism $P_{2} \rightarrow eSe$, where $P_{2}$ is the polycyclic monoid on two generators.
This homomorphism is an embedding since $P_{2}$ is congruence-free.
\end{remark}

We may therefore rephrase Lemma~\ref{lem:purely_infinite} in the following terms.

\begin{corollary}\label{cor:beatrice} 
In a $0$-simple Tarski monoid every non-zero idempotent is properly infinite.
In particular, each local submonoid contains a copy of $P_{2}$.
\end{corollary}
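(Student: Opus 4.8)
The plan is to read off both assertions from the material immediately preceding the statement, since the genuine content has already been established. For the first assertion, I would observe that the conclusion of Lemma~\ref{lem:purely_infinite} is word-for-word the defining condition for an idempotent to be properly infinite: given any non-zero idempotent $e$ it produces a pair $x,y$ with $\mathbf{d}(x)=e=\mathbf{d}(y)$, with $\mathbf{r}(x)\perp\mathbf{r}(y)$, and with $\mathbf{r}(x)\vee\mathbf{r}(y)\leq e$. Thus every non-zero idempotent of a $0$-simple Tarski monoid is properly infinite, which is exactly the statement that $S$ is purely infinite.

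For the second assertion, I would fix a non-zero idempotent $e$ together with the pair $x,y$ supplied by Lemma~\ref{lem:purely_infinite}, and use them to construct the embedding of $P_{2}$ promised by the Remark preceding the statement. Both $x$ and $y$ lie in the local submonoid $eSe$, since $\mathbf{d}(x)=\mathbf{d}(y)=e$ and $\mathbf{r}(x),\mathbf{r}(y)\leq e$. I would then define a map on the two generators of $P_{2}$ by sending them to $x$ and $y$. To see that this extends to a monoid homomorphism into $eSe$, whose identity is $e$, I would verify the defining relations of the polycyclic monoid: the two diagonal relations hold because $x^{-1}x=e=y^{-1}y$, while the off-diagonal relations follow from orthogonality, since $x^{-1}y=x^{-1}(xx^{-1})(yy^{-1})y=x^{-1}\bigl(\mathbf{r}(x)\mathbf{r}(y)\bigr)y=0$, and symmetrically $y^{-1}x=0$.

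Finally I would invoke congruence-freeness of $P_{2}$ to upgrade the homomorphism to an embedding. The map is nonzero, as it sends the generators to the non-zero elements $x$ and $y$, so the congruence induced by it is not the universal congruence; since $P_{2}$ has only the trivial and the universal congruence, the induced congruence is equality and the map is injective. Hence each local submonoid $eSe$ contains a copy of $P_{2}$. I do not anticipate a real obstacle here: the entire substance resides in Lemma~\ref{lem:purely_infinite}, and what remains is the routine check of the polycyclic relations together with the standard congruence-free argument.
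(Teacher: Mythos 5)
Your proposal is correct and follows the paper's own route exactly: the paper derives the first assertion by observing that Lemma~\ref{lem:purely_infinite} is verbatim the definition of properly infinite, and the second from the preceding Remark that a properly infinite idempotent yields a homomorphism $P_{2}\rightarrow eSe$ which is an embedding by congruence-freeness. You have merely (and correctly) written out the routine verification of the polycyclic relations and the standard congruence-free injectivity argument that the paper leaves implicit.
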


This result will lead us to an exact formulation of the difference between $0$-simple and $0$-simplifying.

\begin{lemma}\label{lem:benedick} Let $S$ be a Tarski monoid and let $e$ and $f$ be any two non-zero idempotents such that $e \preceq f$. 
Then we may find elements 
$x_{1}, \ldots, x_{m}$ such that
$\mathbf{r}(x_{i}) \leq f$ for $1 \leq i \leq m$ 
and 
$e = \bigvee_{i=1}^{m}  \mathbf{d}(x_{i})$
where this is an orthogonal join of idempotents.
\end{lemma}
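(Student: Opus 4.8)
We are given idempotents $e \preceq f$, meaning there is a pencil $X = \{y_1, \ldots, y_n\}$ with $e = \bigvee_{i=1}^n \mathbf{d}(y_i)$ and $\mathbf{r}(y_i) \leq f$ for all $i$. The domains $\mathbf{d}(y_i)$ need not be pairwise orthogonal, so the join defining $e$ is a general (compatible) join, not an orthogonal one. The goal is to produce a possibly different family $x_1, \ldots, x_m$ so that the domains become pairwise orthogonal while keeping the ranges under $f$.

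So the task is: given a finite cover of $e$ by domain idempotents $\mathbf{d}(y_i)$, refine it into an orthogonal (disjoint) decomposition, and adjust each $y_i$ accordingly so its range stays below $f$.

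**The orthogonalization idea.** This is a standard "disjointification" trick. Given idempotents $d_1, \ldots, d_n$ (here $d_i = \mathbf{d}(y_i)$) with $\bigvee d_i = e$, in a Boolean algebra I can replace them by
$$d_1' = d_1, \quad d_2' = d_2 \bar{d_1}, \quad d_3' = d_3 \bar{d_1}\bar{d_2}, \quad \ldots, \quad d_i' = d_i \prod_{j<i} \bar{d_j}.$$
These are pairwise orthogonal, each $d_i' \leq d_i$, and $\bigvee_i d_i' = \bigvee_i d_i = e$. Then I set $x_i = y_i d_i'$. This is a restricted product since $d_i' \leq d_i = \mathbf{d}(y_i) = \mathbf{r}(d_i')$. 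It satisfies $\mathbf{d}(x_i) = d_i'$ and $\mathbf{r}(x_i) = y_i d_i' y_i^{-1} \leq y_i \mathbf{d}(y_i) y_i^{-1} = \mathbf{r}(y_i) \leq f$. Some $d_i'$ may be zero, in which case I simply discard that index; relabelling the survivors gives the desired family $x_1, \ldots, x_m$.

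**Why this works and the main obstacle.** The verification that $\bigvee_i d_i' = e$ and that the $d_i'$ are orthogonal is a pure Boolean-algebra computation inside the semilattice of idempotents $E(S)$, which is a Boolean algebra by hypothesis, so complementation and finite meets are available. The restricted products and the computation $\mathbf{r}(x_i) \leq f$ use only that $d_i' \leq \mathbf{d}(y_i)$ together with the elementary identities $\mathbf{d}(st) = \mathbf{d}(t)$, $\mathbf{r}(st) = \mathbf{r}(s)$ for a restricted product. The main thing to be careful about is the interaction of the join with multiplication: I need $\bigvee_i x_i$ to genuinely have domain $\bigvee_i d_i' = e$, which follows from Lemma~\ref{le:rik} (domains distribute over joins in a distributive inverse monoid), and I need the $x_i$ to be pairwise orthogonal so that the join is orthogonal — but this is automatic, since $\mathbf{d}(x_i) = d_i' \perp d_j' = \mathbf{d}(x_j)$ forces $x_i^{-1}x_j = 0$ and, symmetrically from orthogonality of the ranges after shrinking is not even needed, $x_i x_j^{-1} = 0$ because $x_i x_j^{-1} = y_i d_i' d_j' y_j^{-1} = 0$. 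Thus the only real content is the disjointification formula, and the only place requiring mild care is confirming that passing from $y_i$ to $y_i d_i'$ does not disturb the orthogonality bookkeeping — which it does not, since shrinking domains can only shrink (or preserve orthogonality of) the elements. I would therefore present the proof as: write down the $d_i'$, check orthogonality and that they join to $e$, define $x_i = y_i d_i'$, discard zero terms, and verify the range bound and orthogonality of the $x_i$.
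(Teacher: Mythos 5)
Your proof is correct and is essentially identical to the paper's: the paper also takes the pencil $y_1,\ldots,y_m$, disjointifies the domains via $f_1 = e_1$, $f_i = e_i\,\overline{(e_1 \vee \cdots \vee e_{i-1})}$ (the same idempotents as your $d_i'$), and sets $x_i = y_i f_i$. Your additional verifications (orthogonality of the $x_i$ themselves, behaviour of domains under joins) go slightly beyond what the statement requires but are harmless.
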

\begin{proof} From the definition of $\preceq$ we may find such elements $y_{j}$ such that the following hold
$y_{1}, \ldots, y_{m}$ such that
$\mathbf{r}(y_{i}) \leq f$ for $1 \leq i \leq m$ 
and 
$e = \bigvee_{i=1}^{m}  \mathbf{d}(y_{i})$.
Put $e_{i} = \mathbf{d}(y_{i})$.
Define idempotents $f_{1}, \ldots, f_{n}$ as follows
$f_{1} = e_{1}$, $f_{2} = e_{2} \overline{e_{1}}$, \ldots, $f_{n} = e_{n} \overline{(e_{1} \vee \ldots \vee e_{n-1})}$.
These idempotents are pairwise orthogonal and their join  is $e$.
Observe that $f_{i} \leq e_{i}$.
Define $x_{i} = y_{i}f_{f}$.
Then $\mathbf{d}(x_{i}) = f_{i}$.
Clearly $\mathbf{r}(x_{i}) \leq f$.
\end{proof}

We now have the following result suggested by \cite[Proposition 4.11]{Matui13}

\begin{proposition}\label{prop:leonato} Let $S$ be a Tarski monoid.
Then the following are equivalent.
\begin{enumerate}

\item  $S$ is $0$-simple.

\item $S$ is $0$-simplifying and purely infinite.

\end{enumerate}
\end{proposition}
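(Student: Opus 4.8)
The plan is to prove the two implications separately, using the relation $\preceq$ and Lemma~\ref{lem:toby} to connect the $\vee$-ideal formulation of $0$-simplifying to the idempotent-theoretic formulation via pencils, while Corollary~\ref{cor:beatrice} supplies proper infiniteness in the $0$-simple direction. The governing idea is that both notions of simplicity are statements about being able to ``move'' one idempotent to (a piece below) another, and the gap between them is exactly measured by whether a pencil can be collapsed down into a single $\mathscr{D}$-relation: $0$-simplicity (via Lemma~\ref{lem:helen}) demands a single element $e \, \mathscr{D} \, i \le f$, whereas $0$-simplifying (via Lemma~\ref{lem:toby}) only furnishes a finite pencil. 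Proper infiniteness is precisely what lets us absorb a finite pencil into a single domain inside $f$.

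First I would prove (1)$\Rightarrow$(2). That $0$-simple implies $0$-simplifying is already observed in the text, so the content is pure infiniteness, which is exactly Corollary~\ref{cor:beatrice}: in a $0$-simple Tarski monoid every non-zero idempotent is properly infinite. So this direction is essentially immediate from results already established.

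The substance is (2)$\Rightarrow$(1). I assume $S$ is $0$-simplifying and purely infinite and verify the criterion of Lemma~\ref{lem:helen}: given non-zero idempotents $e$ and $f$, I must produce $i$ with $e \, \mathscr{D} \, i \le f$. Since $S$ is $0$-simplifying, Lemma~\ref{lem:toby} gives $e \preceq f$, and by Lemma~\ref{lem:benedick} I may take an \emph{orthogonal} pencil $x_{1}, \ldots, x_{m}$ with $e = \bigvee_{i=1}^{m} \mathbf{d}(x_{i})$ (orthogonal join) and each $\mathbf{r}(x_{i}) \le f$. The plan now is to use proper infiniteness of $f$ to find inside $f$ a family of $m$ pairwise orthogonal copies of $f$, i.e.\ a tree of bisections splitting $f$ repeatedly. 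Concretely, iterating the properly-infinite splitting of $f$ (as in the embedding $P_{m} \hookrightarrow fSf$ that follows from the $P_{2}$ embedding of Corollary~\ref{cor:beatrice}), I obtain elements $t_{1}, \ldots, t_{m}$ with $\mathbf{d}(t_{j}) = f$, pairwise orthogonal ranges $\mathbf{r}(t_{j}) \le f$. Composing, the elements $z_{j} = t_{j} x_{j}$ (after matching $\mathbf{r}(x_{j}) \le f = \mathbf{d}(t_{j})$) have domains $\mathbf{d}(z_{j}) = \mathbf{d}(x_{j})$, which are pairwise orthogonal and join to $e$, and ranges $\mathbf{r}(z_{j}) \le f$ that are now pairwise orthogonal. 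Then $z = \bigvee_{j=1}^{m} z_{j}$ is defined, $\mathbf{d}(z) = e$, and $i := \mathbf{r}(z) = \bigvee_{j} \mathbf{r}(z_{j}) \le f$, giving $e \, \mathscr{D} \, i \le f$ as required.

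The main obstacle, and the step I expect to require the most care, is arranging the ranges of the pencil to be mutually orthogonal so that the join $z$ actually exists and witnesses a single $\mathscr{D}$-relation. The pencil from $e \preceq f$ has orthogonal \emph{domains} (Lemma~\ref{lem:benedick}) but its ranges all merely sit below $f$ and may overlap arbitrarily; the point of purely infinite is precisely to separate them. I would therefore be careful to set up the iterated splitting of $f$ so that it yields $m$ genuinely orthogonal sub-idempotents of $f$, each $\mathscr{D}$-related to $f$ (hence large enough to receive a copy of the corresponding $\mathbf{r}(x_{j})$), and then to check that post-composition with the $t_{j}$ preserves the domains while disjointifying the ranges. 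Once orthogonality of the $\mathbf{r}(z_{j})$ is secured, existence of the orthogonal join and the verification $\mathbf{d}(z) = e$, $\mathbf{r}(z) \le f$ are routine via Lemma~\ref{le:rik}.
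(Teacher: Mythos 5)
Your proposal is correct and follows essentially the same route as the paper: both directions use Corollary~\ref{cor:beatrice} for (1)$\Rightarrow$(2), and for (2)$\Rightarrow$(1) both take the orthogonal pencil from Lemma~\ref{lem:benedick}, manufacture $m$ elements with domain $f$ and pairwise orthogonal ranges below $f$ from proper infiniteness (the paper writes these explicitly as $a, ba, \ldots, b^{n-1}a$, which is exactly your iterated splitting), compose, and take the orthogonal join to verify the criterion of Lemma~\ref{lem:helen}.
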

\begin{proof} (1)$\Rightarrow$(2).
Every $0$-simple semigroup is $0$-simplifying,
and we proved in Corollary~\ref{cor:beatrice}, that in a $0$-simple Tarski monoid every non-zero idempotent is properly infinite.

(2)$\Rightarrow$(1).
Let $e$ and $f$ be any two non-zero idempotents.
From the fact that the monoid is $0$-simplifying, and  Lemma~\ref{lem:benedick}, 
we may find elements $w_{1}, \ldots, w_{n}$ such that
$e = \bigvee_{i=1}^{n} \mathbf{d}(w_{i})$ is an orthogonal join
and $\mathbf{r}(w_{i}) \leq f$.
From the fact that the monoid is purely infinite, we may find elements $a$ and $b$ such that
$\mathbf{d}(a) = f = \mathbf{d}(b)$ and $\mathbf{r}(a),\mathbf{r}(b) \leq f$
and $\mathbf{r}(a)$ and $\mathbf{r}(b)$ are orthogonal.
Thus, in particular, $a^{-1}b = 0$ and $a^{-1}a = e = b^{-1}b$.
Define the elements $v_{1}, \ldots, v_{n}$ as follows:
$v_{1} = a$, $v_{2} = ba$, $v_{3} = b^{2}a$, \ldots, $v_{n} = b^{n-1}a$.
Observe that $\mathbf{d}(v_{i}) = f$ and that the $\mathbf{r}(v_{i}) \leq f$ are pairwise orthogonal.
Consider now the elements $v_{1}w_{1}, \ldots, v_{n}w_{n}$.
It is easy to check that these elements are pairwise orthogonal.
We may therefore form the join $w = \bigvee_{i=1}^{n} v_{i} w_{i}$.
Clearly, $\mathbf{d}(w) = e$ and $\mathbf{r}(w) \leq f$.
The result now follows by Lemma~\ref{lem:helen}.
\end{proof}

The following generalizes part (3) of \cite[Proposition 4.10]{Matui13}.

\begin{lemma}\label{lem:conrade} Let $S$ be a $0$-simple Tarski monoid.
Let $e$ and $f$ be idempotents such that $e \neq 1$ and $f \neq 0$.
Then there is an invertible element $g$ such that $geg^{-1} \leq f$.
\end{lemma}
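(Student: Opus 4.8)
The plan is to reduce the statement to a single cancellation fact about complements and then assemble the unit as an orthogonal join of two partial bijections. Throughout I would use that a $0$-simple Tarski monoid is $0$-simplifying and purely infinite by Proposition~\ref{prop:leonato}, so every non-zero idempotent is properly infinite by Corollary~\ref{cor:beatrice}; and that by Lemma~\ref{lem:helen} any two non-zero idempotents $a,b$ admit an element with domain $a$ and range an idempotent beneath $b$.

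First I would dispose of the degenerate case $e=0$, where $g=1$ works since $geg^{-1}=0\le f$. So assume $e\ne 0$; together with $e\ne 1$ this gives $\bar{e}\ne 0$, and both $e$ and $\bar{e}$ are then properly infinite. Since $f\ne 0$ and $E(S)$ is atomless, I would choose an idempotent $f_{0}$ with $0\ne f_{0}<f$, so that $f_{0}\ne 1$. Applying Lemma~\ref{lem:helen} to the non-zero idempotents $e$ and $f_{0}$ produces an element $s$ with $\mathbf{d}(s)=e$ and $j:=\mathbf{r}(s)\le f_{0}\le f$. Here $j\ne 0$ (as $e\ne 0$) and $j\le f_{0}<1$, so $\bar{j}\ne 0$ and hence $\bar{j}$ is also properly infinite. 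By construction $e\,\mathscr{D}\,j$.

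The heart of the argument is the claim that $\bar{e}\,\mathscr{D}\,\bar{j}$, i.e.\ that the partial bijection $s\colon e\to j$ extends across the complements. Granting this, choose $t$ with $\mathbf{d}(t)=\bar{e}$ and $\mathbf{r}(t)=\bar{j}$. Since $\mathbf{d}(s)=e\perp\bar{e}=\mathbf{d}(t)$ and $\mathbf{r}(s)=j\perp\bar{j}=\mathbf{r}(t)$, the elements $s$ and $t$ are orthogonal, so $g:=s\vee t$ exists. A direct computation with distributivity gives $g^{-1}g=e\vee\bar{e}=1$ and $gg^{-1}=j\vee\bar{j}=1$, so $g$ is a unit; and since $te=0$ and $st^{-1}=0$ (each of these products has orthogonal domain and range idempotents) we obtain $ge=s$ and hence $geg^{-1}=ss^{-1}=j\le f$, exactly as required. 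Note that this reduction is essentially forced: any solution $g$ gives $j=geg^{-1}\le f$, $s=ge$, $t=g\bar{e}$ with $\mathbf{r}(t)=\bar{j}$, so some such cancellation must hold.

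It remains to establish the claim $\bar{e}\,\mathscr{D}\,\bar{j}$, and this is where I expect the real difficulty. Both $\bar{e}$ and $\bar{j}$ are non-zero and properly infinite, and they have the same ``type'': from $e\,\mathscr{D}\,j$ together with $1=e\vee\bar{e}=j\vee\bar{j}$ one wants to cancel the $\mathscr{D}$-equivalent summands $e$ and $j$ from $1$. I would isolate this as a cancellation lemma: \emph{in a $0$-simple Tarski monoid, if $e\,\mathscr{D}\,j$ with $\bar{e},\bar{j}\ne 0$, then $\bar{e}\,\mathscr{D}\,\bar{j}$.} The main obstacle is that such cancellation is false for general distributive inverse monoids, and the naive Schr\"{o}der--Bernstein construction is unavailable here because it requires countably infinite orthogonal joins, which a Tarski algebra does not possess. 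The proof must therefore genuinely exploit pure infiniteness (Corollary~\ref{cor:beatrice}, Lemma~\ref{lem:benedick}), by which each non-zero idempotent absorbs orthogonal sub-equivalent pieces through \emph{finite} orthogonal decompositions; the content is the inverse-monoid analogue of Cuntz's theorem that for purely infinite $0$-simple monoids the non-zero $\mathscr{D}$-classes carry a cancellative, group-like structure. Carrying out this cancellation by finite means, rather than by an infinite back-and-forth, is the step I expect to be delicate.
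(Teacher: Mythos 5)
There is a genuine gap, and you have located it yourself: the claim that $e \, \mathscr{D} \, j$ with $\bar{e}, \bar{j} \neq 0$ forces $\bar{e} \, \mathscr{D} \, \bar{j}$ is stated as "the heart of the argument" but never proved. This is not a consequence of the lemmas you cite. It is the inverse-monoid analogue of Cuntz's theorem that in a purely infinite simple setting equivalence classes of non-zero idempotents are governed by $K_{0}$, and establishing it by finite orthogonal decompositions is a substantial piece of work in its own right — arguably harder than the lemma you are trying to prove. A proof proposal that reduces the statement to an unproved claim of comparable or greater difficulty is incomplete.

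The larger point is that this difficulty is avoidable, and the paper's proof avoids it. You have locked yourself into finding a unit $g$ that carries $\bar{e}$ onto $\bar{j}$, but nothing in the statement requires this. The paper instead observes that when $f\bar{e} \neq 0$ one can choose (by Lemma~\ref{lem:helen}) an element $a$ with $\mathbf{d}(a) = e$ and $\mathbf{r}(a) \leq f\bar{e}$; then $\mathbf{d}(a) \perp \mathbf{r}(a)$, so $a$ is an infinitesimal, and Lemma~\ref{lem:spooks}(3) hands you a unit $g = a \vee a^{-1} \vee \overline{(\mathbf{d}(a) \vee \mathbf{r}(a))}$ lying above $a$. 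This $g$ is an involution that swaps $e$ with $\mathbf{r}(a)$ and fixes the rest of the identity, so $ge = a$ and $geg^{-1} = \mathbf{r}(a) \leq f$ — no matching of complements is needed because $g$ acts as the identity off $e \vee \mathbf{r}(a)$. The remaining case $f\bar{e} = 0$ (so $f \leq e$) is then handled by composing two applications of the first case: conjugate $e$ beneath $\bar{e}$ by some unit $u$, then conjugate $\bar{e}$ beneath $f$ by some unit $v$, and take $g = vu$. Your assembly of $g = s \vee t$ and the computation $geg^{-1} = ss^{-1} = j$ are correct as far as they go, but the route forces you through a cancellation that the paper never needs.
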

\begin{proof}
Suppose first that $f \bar{e} \neq 0$.
Since $S$ is $0$-simple, there exists $a \in S$ such that $\mathbf{d}(a) = e$ and $\mathbf{r}(a) \leq f \bar{e}$.
Clearly, $\mathbf{d}(a)$ and $\mathbf{r}(a)$ are orthogonal.
Thus $a^{2} = 0$.
By Lemma~\ref{lem:spooks}, we may  define $g = a \vee a^{-1} \vee i$, a unit, where $i = 1 \overline{(\mathbf{d}(a) \vee \mathbf{r}(a))}$.
Thus $ie = 0$.
We have that $geg^{-1} \leq f$.

Suppose now that $f \bar{e} = 0$.
Then $f < e$.
By the above result, we may find an invertible element $u$ such that
$ueu^{-1} \leq \bar{e}$.
Similarly, we may find an invertible element $v$ such that $v\bar{e}v^{-1} \leq f$.
Thus $vu e (vu)^{-1} \leq f$, as required.
\end{proof}

\section{Proof of Theorem~\ref{them:one}}

The following summarizes what we may deduce using Theorem~\ref{them:duality} when applied to Boolean inverse $\wedge$-monoids
under the conditions of  Proposition~\ref{prop:blop}, Proposition~\ref{prop:one} and Proposition~\ref{prop:sweep}.

\begin{theorem}\label{them:qi} Under non-commutative Stone duality,
$0$-simplifying, fundamental Tarski monoids correspond
to minimal, essentially principal, second countable Hausdorff  \'etale groupoids with unit space the Cantor space.
Under this correspondence, the group of units of the inverse monoid corresponds to the group of compact-open bisections of the groupoid.
\end{theorem}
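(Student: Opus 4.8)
The plan is to assemble Theorem~\ref{them:qi} as a direct consequence of the duality results established earlier, by matching each hypothesis on the inverse monoid side with its groupoid-theoretic translation. First I would invoke Theorem~\ref{them:duality} (non-commutative Stone duality) to set up the dual equivalence $S \mapsto \mathsf{G}(S)$ between Boolean inverse $\wedge$-monoids and Boolean groupoids, recalling that a Boolean groupoid is by definition a Hausdorff \'etale groupoid whose unit space $\mathsf{G}(S)_{o}$ is compact, Hausdorff, and has a basis of clopen sets. The task then reduces to checking that, under this equivalence, each of the four conditions in the theorem statement corresponds exactly to one of our standing hypotheses on $S$.

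The heart of the argument is a dictionary translation, condition by condition. I would proceed as follows. The hypothesis that $E(S)$ is the Tarski algebra translates, via Stone duality for Boolean algebras, into the assertion that $\mathsf{X}(S) \cong \mathsf{G}(S)_{o}$ is the Cantor space, since the Cantor space is precisely the Stone space of the unique countable atomless Boolean algebra. Countability of $S$ corresponds to second countability of the groupoid, as the basic opens $V_{s}$ are indexed by $S$. That $S$ is fundamental corresponds to $\mathsf{G}(S)$ being essentially principal, which is exactly Proposition~\ref{prop:one}. That $S$ is $0$-simplifying corresponds to $\mathsf{G}(S)$ being minimal, which is exactly Proposition~\ref{prop:sweep}. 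Finally I would address the last sentence: the group of units $U(S)$ consists of those $s$ with $s^{-1}s = ss^{-1} = 1$, and under $s \mapsto V_{s}$ these map to the compact-open local bisections $V$ with $V^{-1}V = VV^{-1} = \mathsf{G}(S)_{o}$, that is, the compact-open (global) bisections of the groupoid; this identification is functorial and respects the group operation.

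The main obstacle I anticipate is purely bookkeeping rather than conceptual: one must confirm that the correspondences are mutually compatible, so that a single groupoid simultaneously witnesses all four properties, and that no hypothesis in Theorem~\ref{them:duality} (such as the $\wedge$-monoid structure) is lost in translation. In particular I would want to be careful that the Boolean-algebra-level statement ``$E(S)$ is the Tarski algebra $\Leftrightarrow$ unit space is Cantor'' is applied to $\mathsf{G}(S)_{o}$ via the stated homeomorphism $\mathsf{X}(S) \cong \mathsf{G}(S)_{o}$, and that Proposition~\ref{prop:blop} together with Corollary~\ref{cor:judy} is what guarantees we are in the atomless (infinite) case rather than the finite symmetric inverse monoid case. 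Since each individual equivalence has already been proved, the proof amounts to citing Theorem~\ref{them:duality}, Proposition~\ref{prop:one}, and Proposition~\ref{prop:sweep} in sequence and noting the Stone-duality identification of the unit space; accordingly the argument should be short, with the only genuine work being the verification that units correspond to bisections.
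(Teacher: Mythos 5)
Your proposal matches the paper's own treatment: the theorem is stated there as a summary obtained by applying Theorem~\ref{them:duality} together with Proposition~\ref{prop:blop}, Proposition~\ref{prop:one} and Proposition~\ref{prop:sweep}, which is exactly the condition-by-condition dictionary you describe. The argument is correct and essentially identical in route, including the identification of units with global compact-open bisections.
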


The above theorem, combined with \cite[Theorem 3.10]{Matui13}, provides the indirect proof of our Theorem~\ref{them:one}.
However, we wish to give a direct proof, which is the goal of this section.

\subsection{The axioms}

Recall that in the case where $g$ is a unit, we have that $\sigma (g) = \overline{\phi (g)}$.

\begin{lemma}\label{lem:midge} Let $S$ be a Boolean inverse $\wedge$-monoid, and let $g$ and $h$ be units.
\begin{enumerate}

\item $\sigma (g) = 0$ if, and only if, $g = 1$.

\item $\sigma (g^{-1}) = \sigma (g)$.

\item $\sigma (gh) \leq \sigma (g) \vee \sigma (h)$.

\end{enumerate}
\end{lemma}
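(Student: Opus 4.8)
The plan is to recall the key facts about the support operator for units and use them as a toolkit. For a unit $g$, the central identity is $\sigma(g) = \overline{\phi(g)}$, where $\phi(g) = g \wedge 1$ is the fixed-point operator from Proposition~\ref{prop:leech}. Since complementation in the Boolean algebra $E(S)$ is order-reversing, each claim about supports should translate into a claim about fixed points, and the fixed-point operator is often easier to reason about directly since $\phi(g)$ is simply the largest idempotent below $g$.

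**Part (1).** For the first statement, I would argue that $\sigma(g) = 0$ is equivalent to $\overline{\phi(g)} = 0$, i.e.\ to $\phi(g) = 1$. By the defining property (FPO1), $\phi(g) \leq g$, so $\phi(g) = 1$ forces $1 \leq g$; since $g$ is a unit with $\mathbf{d}(g) = \mathbf{r}(g) = 1$, this gives $g = 1$. Conversely, if $g = 1$ then $\phi(1) = 1 \wedge 1 = 1$, so $\sigma(1) = \bar{1} = 0$. This direction is essentially immediate.

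**Part (2).** For the symmetry $\sigma(g^{-1}) = \sigma(g)$, I would reduce to showing $\phi(g^{-1}) = \phi(g)$. The key observation is that $\phi(g)$ is an idempotent below $g$, and idempotents are self-inverse; so if $e \leq g$ is an idempotent then $e = e^{-1} \leq g^{-1}$, and vice versa. Hence the sets of idempotents below $g$ and below $g^{-1}$ coincide, and taking the largest element of each (which exists by Proposition~\ref{prop:leech} and equals the respective $\phi$-value) gives $\phi(g) = \phi(g^{-1})$. Taking complements yields the result.

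**Part (3), the main obstacle.** The subadditivity $\sigma(gh) \leq \sigma(g) \vee \sigma(h)$ is the substantive claim, and I expect it to be the hard part. Complementing, it is equivalent to $\phi(g)\,\phi(h) \leq \phi(gh)$, i.e.\ the product (= meet, for idempotents) of the fixed-point idempotents sits below the fixed point of the product. The natural approach is to set $e = \phi(g)\phi(h)$ and show $e \leq gh$; then (FPO2) gives $e \leq \phi(gh)$ directly. Since $e \leq \phi(g) \leq g$ and $e \leq \phi(h) \leq h$, and $e$ is an idempotent, I would compute $ge = e$ (because $e \leq \phi(g) \leq g$ means $e$ is fixed by $g$ in the sense $ge = e$, as $e = \phi(g)e$ and $g\phi(g) = \phi(g)$) and similarly $he = e$. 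Then $gh e = g(he) = ge = e \leq gh$, using that $e \leq gh \cdot (\text{something})$—more carefully, from $ghe = e$ and $e$ idempotent one gets $e = e \cdot e = (ghe)e \le gh$. The delicate point to verify is precisely the identity $ge = e$ for $e \leq \phi(g)$: this follows because $\phi(g) \leq g$ gives $g\phi(g) = \phi(g)$, and then $ge = g\phi(g)e = \phi(g)e = e$. Once $e \leq gh$ is established, (FPO2) finishes the argument, and complementing the inequality $\phi(g)\phi(h) \leq \phi(gh)$ (together with the Boolean identity $\overline{\phi(g)\phi(h)} = \overline{\phi(g)} \vee \overline{\phi(h)}$) yields the stated bound.
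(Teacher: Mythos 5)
Your proposal is correct and follows essentially the same route as the paper: reduce everything to the fixed-point operator via $\sigma(g)=\overline{\phi(g)}$, use (FPO1) for part (1), the symmetry of idempotents under inversion for part (2), and establish $\phi(g)\phi(h)\leq gh$ followed by (FPO2) and De Morgan for part (3). The only difference is cosmetic: the paper obtains $\phi(g)\phi(h)\leq gh$ in one step by multiplying the inequalities $\phi(g)\leq g$ and $\phi(h)\leq h$ (compatibility of the natural partial order with multiplication), whereas you verify it by the explicit computation $ghe=e$; both are valid.
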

\begin{proof} (1) One direction follows since $\phi (1) = 1$.
To prove the converse, suppose that $\phi (g) = 1$.
Then $1 \leq g$ and so $g = 1$.

(2) For any idempotent $e$ we have that $e \leq g$ if and only if $e \leq g^{-1}$.
It follows that $\phi (g) = \phi (g^{-1})$.

(3) From $\phi (g) \leq g$ and $\phi (h) \leq h$ we get that $\phi (g)\phi (h) \leq gh$.
Thus $\phi (g)\phi (h) \leq \phi (gh)$.
The result now follows.
\end{proof}

We now state three axioms  that will play a crucial role in proving the spatial realization theorem.
They are translations into our language of those given in \cite[Definition 3.1]{Matui13}.
See also \cite{Fremlin}.

\begin{itemize}

\item[{\rm (F1)}] {\em Enough involutions. }For each ultrafilter $F \subseteq E(S)$ and each $e \in F$, there is a non-trivial involution $t$ such that $\sigma (t) \in F$ and $\sigma (t) \leq e$.

\item[{\rm (F2)}] {\em Shrinking.} For each non-trivial involution $t$ and non-zero idempotent $e \leq \sigma (t)$ there exists a non-trivial unit $g$
such that 
$$\sigma (g) \leq e \vee tet = (te)^{-1}te \vee te(te)^{-1}$$ 
and $gFg^{-1} = tFt$ for all $F \in U_{\sigma (g)}$.

\item[{\rm (F3)}] {\em Enough non-involutions.} For each non-zero idempotent $e$, there exists a non-involution unit $g$ such that $\sigma (g) \leq e$.

\end{itemize}

\begin{proposition}\label{prop:hope} 
Let $S$ be a $0$-simplifying, fundamental Tarski monoid.
Then the axioms (F1), (F2) and (F3) hold.
\end{proposition}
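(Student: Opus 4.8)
The plan is to verify the three axioms (F1), (F2) and (F3) separately. Axioms (F1) and (F3) are existence statements about particular units, which I would build directly from the infinitesimal-production machinery of Proposition~\ref{prop:george} and Lemma~\ref{lem:horsa} together with Lemma~\ref{lem:spooks}. Axiom (F2) is a more delicate assertion about conjugation, and this is the step I expect to be the main obstacle.

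For (F1), I would start from Proposition~\ref{prop:george}: given the ultrafilter $F$ and $e \in F$, it produces an infinitesimal $a \in eSe$ with $a^{-1}a \in F$. Feeding $a$ into Lemma~\ref{lem:spooks}(3) yields a non-trivial involution $t = a^{-1} \vee a \vee i$, where $i = \overline{a^{-1}a}\,\overline{aa^{-1}}$. The only genuine computation is to identify $\phi(t)$: since $a$ and $a^{-1}$ are infinitesimals, any idempotent beneath $t$ meets each of them trivially (splitting the meet over the join via Lemma~\ref{lem:meets_and_joins}), so $\phi(t) = i$ and hence $\sigma(t) = \overline{i} = a^{-1}a \vee aa^{-1}$. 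Both joinands lie beneath $e$, giving $\sigma(t) \le e$, and $a^{-1}a \in F$ forces $\sigma(t) \in F$ by upward closure.

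For (F3), I would use Lemma~\ref{lem:horsa} to obtain infinitesimals $a,b \in eSe$ with $ab$ a restricted product that is again an infinitesimal; this makes the three idempotents $\mathbf{d}(b)$, $\mathbf{d}(a) = \mathbf{r}(b)$ and $\mathbf{r}(a)$ pairwise orthogonal. The element $c = a \vee b \vee (ab)^{-1}$ then cyclically permutes these three idempotents, and a short check gives $\mathbf{d}(c) = \mathbf{r}(c) = k$, where $k = \mathbf{d}(a) \vee \mathbf{d}(b) \vee \mathbf{r}(a) \le e$. Lemma~\ref{lem:spooks}(1) turns $c$ into a unit $g = c \vee \overline{k}$, and the same infinitesimal-meet argument as in (F1) shows $\phi(g) = \overline{k}$, whence $\sigma(g) = k \le e$. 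Finally $g$ is not an involution, since $c^{2}$ sends $\mathbf{d}(b)$ to $\mathbf{r}(a)$ via the nonzero infinitesimal $ab$, so $c^{2} \neq k$ and $g^{2} \neq 1$.

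The heart of the proposition is (F2). Given the involution $t$ and $0 \neq e \le \sigma(t)$, I would set $k = e \vee tet$ and take $g = tk \vee \overline{k}$. One checks $tkt = k$ and $\mathbf{d}(tk) = \mathbf{r}(tk) = k$, so $g$ is a unit by Lemma~\ref{lem:spooks}(1). The containment $\sigma(g) \le e \vee tet$ is then immediate, since $\overline{k} \le g \wedge 1 = \phi(g)$ forces $\sigma(g) = \overline{\phi(g)} \le k$. The key computation is that every idempotent $f \le k$ satisfies $gfg^{-1} = tft^{-1}$; this follows from $kf = f$, $\overline{k}f = 0$ and $t^{2} = 1$, which give $gf = tf$ and then $gfg^{-1} = tft$. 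Because $\{f \in F : f \le k\}$ is cofinal in any ultrafilter $F$ containing $k$, this pointwise agreement upgrades to $gFg^{-1} = tFt$ for all such $F$, in particular for $F \in U_{\sigma(g)}$. The remaining point, non-triviality of $g$, is exactly where the hypothesis $e \le \sigma(t)$ enters: by Proposition~\ref{prop:labor} the nonempty open set $U_{e} \subseteq U_{\sigma(t)}$ meets $\{F : tFt \neq F\}$, so some $F \ni e$ (hence $F \ni k$) has $gFg^{-1} = tFt \neq F$, forcing $g \neq 1$. The main obstacle is thus correctly localizing conjugation by $t$ to the region $k$ and extracting non-triviality from the support hypothesis; once $g$ is chosen as above, the surrounding verifications are routine.
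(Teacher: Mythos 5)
Your argument is correct, but for (F2) and (F3) it follows a genuinely different route from the paper's proof. Your (F1) is essentially the paper's argument: the paper does not compute $\phi(t)=i$ exactly but instead derives a contradiction from the assumption $\phi(t)\in F$ using Lemma~\ref{lem:meets_and_joins}; the two variants are interchangeable. For (F3) the paper builds two involutions $a\vee a^{-1}\vee i$ and $b\vee b^{-1}\vee j$ from the infinitesimals supplied by Lemma~\ref{lem:horsa} and checks that their product is a unit of order $3$ by tracking four orthogonal blocks; you assemble the $3$-cycle $a\vee b\vee(ab)^{-1}\vee\overline{k}$ directly, which is a little more economical and reaches the same element. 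The real divergence is (F2). The paper drills down inside $e$: it uses Proposition~\ref{prop:labor} and Lemma~\ref{lem:pingo} to find $f\le e$ with $f\perp tft$, then Proposition~\ref{prop:george} to produce an infinitesimal $a$ with $\mathbf{d}(a)\vee\mathbf{r}(a)\le f$ and $\mathbf{d}(a)\in F$, and sets $g=ta^{-1}a\vee a^{-1}at\vee i$, obtaining a unit whose support is the small orthogonal pair $a^{-1}a\vee ta^{-1}at$. You instead take the restriction of $t$ to the clopen set $k=e\vee tet$ extended by the identity, $g=tk\vee\overline{k}$. Your verification is sound: $tkt=k$ gives $tk=kt$, hence $tk\perp\overline{k}$ and $\mathbf{d}(tk)=\mathbf{r}(tk)=k$, so $g$ is a unit; $gfg^{-1}=tft$ for every idempotent $f\le k$, and since $\{f\in F\colon f\le k\}$ is cofinal in any ultrafilter $F\ni k$ and both $gFg^{-1}$ and $tFt$ are ultrafilters, they coincide; non-triviality follows from Proposition~\ref{prop:labor} exactly as you say, and $U_{\sigma(g)}\subseteq U_{k}$ because $\overline{k}\le\phi(g)$. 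Both constructions satisfy (F2) as stated, since the axiom only demands $\sigma(g)\le e\vee tet$. What the paper's heavier construction buys is a unit with much smaller support located inside a prescribed ultrafilter, which is closer in spirit to the ``shrinking'' of Matui's original axiom; what yours buys is brevity and the observation that, in the Boolean inverse monoid setting, the required unit can be written down in one line from $t$ and $e$ alone, with no appeal to infinitesimals.
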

\begin{proof} 

(F1) holds.
Let $F \subseteq E(S)$ be an ultrafilter and $e \in F$.
Then by Proposition~\ref{prop:george}, there is an infinitesimal $a$ such that $a \in eSe$ and $a^{-1}a \in F$.
By Lemma~\ref{lem:spooks}, the element 
$$t = a \vee a^{-1} \vee f$$ 
is an involution, where $f = \overline{a^{-1}a} \,  \overline{aa^{-1}}$.
From $f \leq t$ we get that $f \leq \phi (t)$ and so $\sigma (t) \leq \bar{f} \leq a^{-1}a \vee aa^{-1} \leq e$.
Suppose that $\phi (t) \in F$.
Then by Lemma~\ref{lem:meets_and_joins}, exactly one of $a \wedge 1$ or $a^{-1} \wedge 1$ or $f$ is in $F$.
But $a^{-1}a \in F$ and we quickly get contradictions in all three cases.
It follows that $\sigma (t) \in F$, as required.

(F2) holds.
Let $t$ be a non-trivial involution and let $0 \neq e \leq \sigma (t)$.
By Proposition~\ref{prop:labor}, we may find an ultrafilter $F \subseteq E(S)$ containing $e$ such that $tFt \neq F$.
By Lemma~\ref{lem:pingo},  we may therefore find a non-zero idempotent $f \in F$ such that $f \leq e$ and $f \perp tft$.
By Proposition~\ref{prop:george}, there is therefore an infinitesimal $a$ such that $a^{-1}a \vee aa^{-1} \leq f$ and $a^{-1}a \in F$.
It follows that $a^{-1}a$, $aa^{-1}$, $ta^{-1}at$ and $taa^{-1}t$ are mutually orthogonal. 
Define 
$$i = \overline{(a^{-1}a)} \, \overline{(ta^{-1}at)}.$$
Observe that $ta^{-1}a \perp a^{-1}at$
and that $ta^{-1}a \vee a^{-1}at$ has the same domain as codomain, which is $a^{-1}a \vee ta^{-1}at$.
Thus by Lemma~\ref{lem:spooks}
$$g = ta^{-1}a \vee a^{-1}at \vee i$$
is a unit.
From $i \leq g$ we have that $i \leq \phi (g)$ and so 
$$\sigma (g) \leq a^{-1}a \vee ta^{-1}at \leq e \vee tet.$$
Let $G \in U_{\sigma (g)}$.
We shall show that $gGg^{-1} = tGt$.
Observe that $a^{-1}a \vee ta^{-1}at \in G$.
Let $j \in G$.
We calculate $gjg^{-1}$.
This can be written as a join.
This join contains $ta^{-1}ajt$.
Thus if $a^{-1}a \in G$ then $gjg^{-1} \in tFt$.
If $a^{-1}a \notin G$ then $ta^{-1}at \in G$.
The join representing $gjg^{-1}$ contains $a^{-1}atjt$.
This can be written as $t (ta^{-1}at)jt$, which is again an element of $tGt$.
It follows that $gGg^{-1} \subseteq tGt$ and, 
since both are ultrafilters, 
it follows that they are equal.

(F3) holds. Let $e$ be a non-zero idempotent.
By Lemma~\ref{lem:horsa}, we may find infinitesimals $a,b \in eSe$ such that $ab$ is a restricted product and $ab$ is an infinitesimal.
Put 
$$g = a \vee a^{-1} \vee i,$$ 
an involution, where $i = \overline{a^{-1}a}\, \overline{aa^{-1}}$.
Put 
$$h = b \vee b^{-1} \vee j,$$ 
an involution, 
where $j = \overline{b^{-1}b} \, \overline{bb^{-1}}$.
We claim that $gh$ is a unit of order 3.
Observe that $ij \leq gh$ and so $ij \leq \phi(gh)$.
It follows that 
$\sigma (gh) \leq (a^{-1}a \vee aa^{-1})(b^{-1}b \vee bb^{-1}) \leq e$.
We shall prove that $gh$ has order 3.
We write the identity as an orthogonal join of four idempotents
$$1 = a^{-1}a = bb^{-1}, \,
2 = aa^{-1}, \,
3 = b^{-1}b, \,
4 = j\overline{(aa^{-1})}.$$
\begin{center}
\begin{tikzpicture}[xscale=2]

\draw [fill=gray] (0,2) -- (1,3) -- (1,4) -- (0,3) -- (0,2);
\draw [fill=gray] (0,3) -- (1,2) -- (1,3) -- (0,4) -- (0,3);
\draw [fill=gray] (1,1) -- (2,3) -- (2,4) -- (1,2) -- (1,1);
\draw [fill=gray] (2,1) -- (2,2) -- (1,4) -- (1,3) -- (2,1);

\draw [dashed] (0,4) -- (1,4) -- (2,4);
\draw [dashed] (0,3) -- (1,3) -- (2,3);
\draw [dashed] (0,2) -- (1,2) -- (2,2);
\draw [dashed] (0,1) -- (1,1) -- (2,1);
\draw [dashed] (0,0) -- (1,0) -- (2,0);

\draw (0,0) -- (0,1);
\draw (0,1) -- (0,2);
\draw (2,2) -- (2,3);
\draw (1,0) -- (1,1);
\draw (2,0) -- (2,1);

\node [above] at (0.5,4) {$g$};
\node [above] at (1.5,4) {$h$};

\node [right] at (2,3.5) {$1$};
\node [right] at (2,2.5) {$2$};
\node [right] at (2,1.5) {$3$};
\node [right] at (2,0.5) {$4$};

\node [left] at (0,3.5) {$1$};
\node [left] at (0,2.5) {$2$};
\node [left] at (0,1.5) {$3$};
\node [left] at (0,0.5) {$4$};


\end{tikzpicture}
\end{center}
We look at what $gh$ does to these four idempotents.
Schematically we get the permutation $(132)$.
It is clear that $gh \neq 1$ and $(gh)^{2} \neq 1$ and that $(gh)^{3} = 1$.
In detail,
$gh = ab \vee a^{-1}j \vee  ib^{-1}  \vee ij$.
Using basic Boolean algebra properties, notably $ef = 0 \Rightarrow e \leq \bar{f}$, we get the following
$$
b^{-1}b \stackrel{ab}{\longrightarrow} aa^{-1},
\quad
aa^{-1} \stackrel{a^{-1}j}{\longrightarrow} a^{-1}a,
\quad
bb^{-1} \stackrel{ib^{-1}}{\longrightarrow} b^{-1}b.$$
\end{proof}

\subsection{Local groups}

If $e \in E(S)$, define
$$U(e) = \{g \in U(S) \colon \sigma (g) \leq e \}.$$
In the lemma below, 
part (3) is the inverse monoid version of \cite[Lemma 3.2]{Matui13}.

\begin{lemma}\label{lem:tom} Let $S$ be a Boolean inverse $\wedge$-monoid.
\begin{enumerate}

\item $U(e)$ is a subgroup of $U(S)$.

\item If $e \leq f$ then $U(e) \subseteq U(f)$.

\item If (F1) or (F3) hold, then  $e \leq f$ if, and only if, $U(e) \subseteq U(f)$.

\end{enumerate}
\end{lemma}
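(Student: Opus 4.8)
The plan is to prove the three parts in order, with parts (1) and (2) being routine consequences of the properties of the support operator established in Lemma~\ref{lem:midge}, and part (3) being the substantive claim. For part (1), I would verify that $U(e)$ is closed under products and inverses. Closure under inverses is immediate from $\sigma(g^{-1}) = \sigma(g)$, which is Lemma~\ref{lem:midge}(2). For closure under products, if $g,h \in U(e)$ then $\sigma(g), \sigma(h) \leq e$, and by Lemma~\ref{lem:midge}(3) we have $\sigma(gh) \leq \sigma(g) \vee \sigma(h) \leq e$, so $gh \in U(e)$. The identity $1$ lies in $U(e)$ since $\sigma(1) = 0 \leq e$ by Lemma~\ref{lem:midge}(1). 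Part (2) is immediate: if $g \in U(e)$ then $\sigma(g) \leq e \leq f$, so $g \in U(f)$.

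The content is in part (3). The forward direction is just part (2), so the work is the converse: assuming $U(e) \subseteq U(f)$, deduce $e \leq f$. I would argue contrapositively. Suppose $e \nleq f$; then $e\bar{f} \neq 0$, so I can choose a non-zero idempotent $e' \leq e\bar{f}$, whence $e' \leq e$ but $e' \perp f$ (indeed $e' \wedge f = 0$). The idea is to produce a unit $g$ with $\sigma(g) \leq e'$ but $\sigma(g) \nleq f$; such a $g$ lies in $U(e)$ but not in $U(f)$, contradicting the inclusion. Assuming (F3), I apply it to the non-zero idempotent $e'$ to obtain a non-involution unit $g$ with $\sigma(g) \leq e'$. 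The key point is then that $\sigma(g) \nleq f$: if we had $\sigma(g) \leq f$, then since also $\sigma(g) \leq e'$ we would get $\sigma(g) \leq e' \wedge f = 0$, forcing $g = 1$ by Lemma~\ref{lem:midge}(1), contradicting that $g$ is a non-involution (hence in particular non-trivial). Thus $g \in U(e) \setminus U(f)$, the desired contradiction. The argument under (F1) is parallel: applying (F1) to any ultrafilter $F \ni e'$ yields a non-trivial involution $t$ with $\sigma(t) \leq e'$ and $\sigma(t) \in F$, so $\sigma(t) \neq 0$, and the same $e' \wedge f = 0$ argument shows $\sigma(t) \nleq f$, giving $t \in U(e) \setminus U(f)$.

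The main obstacle I anticipate is purely organizational rather than mathematical: ensuring that the unit extracted from the axiom is genuinely non-trivial so that $\sigma(g) = 0$ is excluded, and correctly handling the two separate hypotheses (F1) and (F3) that the statement allows. In both cases non-triviality is guaranteed by the axiom (a non-trivial involution, or a non-involution unit, is by definition $\neq 1$), so the crux reduces to the clean observation that $\sigma(g) \leq e'$ together with $e' \leq \bar{f}$ forces $\sigma(g) \nleq f$ unless $\sigma(g) = 0$. I would present the (F3) case in full and note that the (F1) case follows by the identical argument with $t$ in place of $g$.
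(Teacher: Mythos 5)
Your proof is correct and follows essentially the same route as the paper: parts (1) and (2) from Lemma~\ref{lem:midge}, and for part (3) the observation that $e\bar{f} \neq 0$ together with (F3) produces a non-trivial unit in $U(e)\setminus U(f)$ via $\sigma(g) \leq e\bar{f}$ and $\sigma(g)f = 0$. In fact you go slightly further than the paper, whose proof only treats the (F3) case; your parallel argument under (F1) (taking an ultrafilter containing $e\bar{f}$ and noting $\sigma(t) \in F$ forces $\sigma(t) \neq 0$) fills in the alternative hypothesis that the statement allows.
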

\begin{proof} (1) This follows by Lemma~\ref{lem:midge}.

(2) Immediate.

(3)  Only one direction needs proving.
Let $U(e) \subseteq U(f)$.
We use the fact that in a Boolean algebra, we have that $e \leq f$ if and only if $e \bar{f} = 0$.
Suppose that $e \bar{f} \neq 0$.
By (F3), there exists a non-trivial unit $t$ such that $\sigma (t) \leq e \bar{f}$.
Clearly, $t \in U(e)$.
If $t \in U(f)$ then $\sigma (t) \leq f$.
Thus $\sigma (t) = 0$ which implies that $t = 1$, a contradiction since $t$ is non-trivial.
It follows that $t \notin U(f)$, which is a contradiction.

\end{proof}

We call $U(e)$ the {\em local subgroup at $e$}.

\begin{remark}{\em 
In what follows, we shall use the following important consequence of Lemma~\ref{lem:bingo}.
Let $g$ and $h$ be units in a fundamental Boolean inverse $\wedge$-monoid.
Then $g = h$ if, and only if, $gFg^{-1} = hFh^{-1}$ for all ultrafilters $F \subseteq E(S)$.
We call this process {\em testing against ultrafilters}.}
\end{remark}

\begin{lemma}\label{lem:lisboa} Let $S$ be a fundamental Boolean inverse $\wedge$-monoid.
Suppose that the unit $g$ is the identity when tested against the ultrafilters in $U_{e}$.
Then $e \leq \phi (g)$.
\end{lemma}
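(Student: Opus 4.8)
The plan is to pass to the structure space $\mathsf{X}(S)$ and translate both the hypothesis and the conclusion into statements about the basic clopen sets $U_{e}$. First I would unwind the hypothesis: saying that the unit $g$ is the identity when tested against the ultrafilters in $U_{e}$ means, by Lemma~\ref{lem:bingo} (the process of testing against ultrafilters discussed in the remark above), that $gFg^{-1} = F$ for every ultrafilter $F \subseteq E(S)$ with $e \in F$. On the other side, since $g$ is a unit we have $\sigma (g) = \overline{\phi (g)}$, so the target inequality $e \leq \phi (g)$ is equivalent, in the Boolean algebra $E(S)$, to $e \wedge \sigma (g) = 0$. Because the assignment $x \mapsto U_{x}$ is injective and carries meets to intersections and $0$ to $\emptyset$, this last condition is in turn equivalent to $U_{e} \cap U_{\sigma (g)} = \emptyset$.

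Next I would bring in Proposition~\ref{prop:labor}, which identifies the support set as a closure:
$$U_{\sigma (g)} = \mathsf{cl}(Y), \qquad Y = \{ F \in \mathsf{X}(S) \colon gFg^{-1} \neq F \}.$$
The hypothesis, as unwound above, says precisely that no ultrafilter of $U_{e}$ is moved by $g$, that is, $U_{e} \cap Y = \emptyset$.

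The crux is then a purely topological observation: $U_{e}$ is an open subset of $\mathsf{X}(S)$ (indeed clopen), and an open set disjoint from $Y$ is automatically disjoint from the closure $\mathsf{cl}(Y)$. Were some $F$ to lie in $U_{e} \cap \mathsf{cl}(Y)$, then $U_{e}$ would be an open neighbourhood of $F$ meeting $Y$, contradicting $U_{e} \cap Y = \emptyset$. Hence $U_{e} \cap U_{\sigma (g)} = U_{e} \cap \mathsf{cl}(Y) = \emptyset$, which by the first paragraph yields $e \wedge \sigma (g) = 0$ and therefore $e \leq \phi (g)$. I expect no serious obstacle here; the statement is short once the right translations are in place. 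The one point demanding care is the passage from $Y$ to its closure, which is exactly where the openness of $U_{e}$ is used and where Proposition~\ref{prop:labor} earns its keep, since the hypothesis only controls $g$ on $Y$ itself while the support is governed by $\mathsf{cl}(Y)$.
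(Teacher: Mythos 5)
Your proof is correct and is essentially the paper's argument: both hinge on Proposition~\ref{prop:labor} together with the openness of $U_{e}$, which forces an open set disjoint from the moved ultrafilters to be disjoint from their closure $U_{\sigma(g)}$. The only difference is presentational — the paper runs the same reasoning as a proof by contradiction (assuming $e \nleq \phi(g)$ and producing a moved ultrafilter in $U_{e}$), whereas you argue directly.
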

\begin{proof} Suppose that $e \nleq \phi (g)$.
Then there exists an ultrafilter $F$ such that $e \in F$ and $\phi (g) \notin F$.
Thus $\sigma (g) \in F$.
By Proposition \ref{prop:labor}, there exists an ultrafilter $G \in U_{e}$ such that $gGg^{-1} \neq G$,
which is a contradiction.
\end{proof}

Let $t$ be a fixed involution.
Denote by $C_{t}$ the centralizer of $t$ in $U(S)$.
Define $Z_{t} \leq C_{t}$ as follows
$$Z_{t} = \{ s \in C_{t} \colon s^{2} = 1, \, (\forall a \in C_{t}) [s,asa^{-1}] = 1 \}.$$
Define
$$S_{t} = \{ a^{2} \colon a \in U(S), (\forall s \in Z_{t}) [a,s] = 1\}.$$
Define
$$W_{t} = \{a \in U(S) \colon  (\forall b \in S_{t})    [a,b] = 1 \}.$$
Clearly, $t \in C_{t}$, and $t \in Z_{t}$.

In the following lemmas, the involution $t$ is fixed.

\begin{lemma}\label{lem:booboo} Let $S$ be a fundamental Boolean inverse $\wedge$-monoid satisfying (F3).
If $s \in Z_{t}$ and $F \in U_{\phi (t)}$ then $sFs = F$.
\end{lemma}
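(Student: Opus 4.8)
The plan is to argue by contradiction, using fundamentality to detect the behaviour of units against ultrafilters and using (F3) to manufacture an element of $C_t$ whose $s$-conjugate fails to commute with $s$. Suppose $sFs \neq F$ for some $F \in U_{\phi(t)}$; since $s$ is an involution, $s^{-1}=s$, so this says $sFs^{-1} \neq F$. As $S$ is fundamental and $s$ is a unit moving $F$, Lemma~\ref{lem:pingo}(1) yields an idempotent in $F$ orthogonal to its $s$-conjugate, and intersecting it with $\phi(t) \in F$ gives a nonzero $e \in F$ with $e \leq \phi(t)$ and $e \perp ses^{-1}$. Writing $q = ses^{-1}$, the left action $g \cdot F := gFg^{-1}$ of $U(S)$ on the structure space has $s$ swapping the disjoint clopen sets $U_e$ and $U_q$.

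The crucial step is to feed $e$ into (F3): there is a non-involution unit $g$ with $\sigma(g) \leq e$. Since $\sigma(g) \leq e \leq \phi(t) = \overline{\sigma(t)}$ we have $\sigma(g)\sigma(t)=0$, so $[g,t]=1$ by Lemma~\ref{lem:jerry}(2); thus $g \in C_t$, and the defining property of $Z_t$ forces $[s,gsg^{-1}]=1$. On the other hand, because $\sigma(g) \leq e \perp q$ the unit $g$ fixes every ultrafilter through $q$, and because $g$ normalizes its own support (Lemma~\ref{lem:jerry}(3) with $h=g$) it maps $U_e$ into $U_e$. Finally, $g^2 \neq 1$ as $g$ is a non-involution, so by fundamentality together with Proposition~\ref{prop:labor} and $\sigma(g^2) \leq \sigma(g) \leq e$ (Lemma~\ref{lem:midge}(3)), there is $F_0 \in U_e$ with $g^2 \cdot F_0 \neq F_0$.

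The contradiction then follows from a short telescoping computation. Put $h = gsg^{-1}$, an involution, so that $[s,h] = shs^{-1}h^{-1} = (sh)^2$. Tracking $F_0 \in U_e$ through $sh = s\cdot g\cdot s\cdot g^{-1}$, reading right to left: $g^{-1}\cdot F_0 \in U_e$, then $s$ sends this into $U_q$, then $g$ acts trivially there, then $s$ returns it to $U_e$, so that $sh \cdot F_0 = g^{-1}\cdot F_0$. Since $g^{-1}\cdot F_0$ again lies in $U_e$, the same computation gives $[s,h]\cdot F_0 = (sh)^2 \cdot F_0 = g^{-2}\cdot F_0 \neq F_0$. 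Hence $[s,h] \neq 1$ by fundamentality, contradicting $[s,gsg^{-1}]=1$. Therefore $sFs = F$, as claimed.

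I expect the main obstacle to be the telescoping itself: one must verify carefully that $g$ acts as the identity on $U_q$ (using $\sigma(g)\leq e \perp q$) and that $g^{\pm 1}$ keeps $U_e$ invariant, so that the four factors of $sh$ collapse to $g^{-1}$ on $U_e$. Everything else is routine support calculus, and the single conceptual point is that the constraint $e \leq \phi(t)$ is exactly what places $g$ inside $C_t$ and so activates the hypothesis $s \in Z_t$.
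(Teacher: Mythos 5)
Your proof is correct and takes essentially the same route as the paper's: argue by contradiction, use Lemma~\ref{lem:pingo} to extract $e \in F$ with $e \leq \phi(t)$ and $e \perp ses$, invoke (F3) to produce a non-involution unit $g$ with $\sigma(g) \leq e$ (hence $g \in C_t$ by Lemma~\ref{lem:jerry}), and then use Proposition~\ref{prop:labor} to pick an ultrafilter in $U_e$ moved by $g^2$ on which $s$ and $gsg^{-1}$ are shown not to commute. The only difference is organizational: you telescope the action of $(s\,gsg^{-1})^2$ on a single $F_0$ to get $g^{-2}\cdot F_0 \neq F_0$, whereas the paper compares the two conjugates $aGa^{-1}$ and $a^{-1}Ga$ of its chosen ultrafilter; both computations rest on the same facts that $g$ fixes $U_{ses}$ pointwise and preserves $U_e$.
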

\begin{proof} We prove the following.
If $s$ is an involution such that there exists $F \in U_{\phi (t)}$ where $sFs \neq F$ then there exists $a \in C_{t}$ such that $[s,asa^{-1}] \neq 1$.
In other words, $s \notin Z_{t}$.

By Lemma~\ref{lem:pingo}, from the fact that $\phi (t) \in F$ and $sFs \neq F$, we may find $e \in F$ such that $e \leq \phi (t)$ and $e(ses) = 0$.
In particular, $e \sigma (t) = 0$.
By (F3), we may find a unit $a$ such that $a^{2} \neq 1$ and $\sigma (a) \leq e$.
Thus, in particular, $\sigma (a) \sigma (t) = 0$.
Therefore by Lemma~\ref{lem:jerry}, we have that $at = ta$, and so we have shown that $a \in C_{t}$.

By Lemma~\ref{lem:midge},  we have that $\sigma (a^{2}) \leq \sigma (a)$.
By Proposition~\ref{prop:labor},  there is an ultrafilter $G \subseteq E(S)$ such that $\sigma (a^{2}) \in G$ such that $a^{2}Ga^{-2} \neq G$.
It is easy to check that as a consequence $G$, $aGa^{-1}$ and $a^{-1}Ga$ are distinct ultrafilters in $E(S)$.

We shall now prove that $s$ and $asa^{-1}$ do not commute.

Since $e(ses) = 0$ and $\sigma (a) \leq e$, we have that $\sigma (a)s \sigma (a)s = 0$ 
By Lemma~\ref{lem:jerry}, we have that $\sigma (a)\sigma (sas) = 0$.

We calculate  $sasa^{-1}s G asa^{-1}s$.
Observe that since $\sigma (a) \in G$,
we have that $\sigma (sas) \in sa^{-1}Gas$.
It follows that $\sigma (a) \notin sa^{-1}Gas$ and so $\phi (a) \in sa^{-1}Gas$.
Thus $a(sa^{-1}Gas)a^{-1} = sa^{-1}Gas$.
We therefore get that 
$$sasa^{-1}s G sasa^{-1} = a^{-1}Ga.$$

We now calculate $asa^{-1}sGsasa^{-1}$.
Observe that $\sigma (a) = \sigma (a^{-1})$.
Now $\sigma (sa^{-1}s) \in sGs$.
Thus $\sigma (a^{-1}) \notin sGs$ and so $\phi (a^{-1}) \in sGs$.
It follows that $a^{-1}sGsa = sGs$.
It now readily follows that  
$$asa^{-1}sGsasa^{-1} = aGa^{-1}.$$

Thus $s$ and $asa^{-1}$ do not commute.
\end{proof}

\begin{lemma}\label{lem:matui_two}
Let $S$ be a fundamental Boolean inverse $\wedge$-monoid satisfying (F1), (F2) and (F3).
Let $0 \neq e \leq \sigma (t)$.
Then there exists $s \in Z_{t}$ such that 
$\sigma (s) \leq e \vee te(te)^{-1}$ and $sFs = tFt$ for all $F \in U_{\sigma (s)}$.
\end{lemma}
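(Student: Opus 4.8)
The statement is the inverse-monoid version of a key lemma in Matui's argument: starting from a non-trivial involution $t$ and a small idempotent $e\leq\sigma(t)$, I must produce an element $s$ of the distinguished subgroup $Z_t$ whose support is controlled by $e$ and which ``acts like $t$'' on ultrafilters in $U_{\sigma(s)}$. The natural approach is to \emph{build} $s$ using axiom (F2) and then \emph{verify membership} in $Z_t$ using the characterization of $Z_t$-non-membership supplied by Lemma~\ref{lem:booboo}. First I would apply (F2) to the involution $t$ and the idempotent $e\leq\sigma(t)$: this hands me a non-trivial unit $g$ with
$$\sigma(g)\leq e\vee tet=e\vee te(te)^{-1}$$
and $gFg^{-1}=tFt$ for all $F\in U_{\sigma(g)}$. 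This $g$ is the candidate for $s$, and the support bound is already exactly what the lemma demands. The two things still to check are that $g$ is an involution and that $g\in Z_t$, i.e.\ that $g$ centralizes $t$ and satisfies the commutator condition defining $Z_t$.

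\textbf{Establishing that $s$ is an involution.} Since $gFg^{-1}=tFt=tFt^{-1}$ for all $F\in U_{\sigma(g)}$, and $t$ is an involution, one computes $g^2Fg^{-2}=t^2Ft^{-2}=F$ on $U_{\sigma(g)}$; off the support, $g$ acts trivially by Proposition~\ref{prop:labor}, so $g^2$ fixes every ultrafilter in $E(S)$. Testing against ultrafilters (Lemma~\ref{lem:bingo}, using that $S$ is fundamental) then forces $g^2=1$. I would also record that $g$ centralizes $t$: the two units act identically on $U_{\sigma(g)}$ and $t$ fixes ultrafilters outside $U_{\sigma(t)}\supseteq U_{\sigma(g)}$, so a short test-against-ultrafilters computation gives $gt=tg$, placing $g$ in $C_t$.

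\textbf{The main obstacle: membership in $Z_t$.} The genuinely hard part is verifying the quantified commutator condition $(\forall a\in C_t)\,[g,aga^{-1}]=1$ that defines $Z_t$, since this is a statement about \emph{all} centralizing units $a$, not just structural data about $g$. Here I would exploit Lemma~\ref{lem:booboo} in contrapositive form: that lemma says an involution $s$ fails to lie in $Z_t$ precisely when there is some $F\in U_{\phi(t)}$ with $sFs\neq F$. So it suffices to show that my $g$ fixes every ultrafilter in $U_{\phi(t)}$. But $\phi(t)=\overline{\sigma(t)}$, and $\sigma(g)\leq e\vee tet\leq\sigma(t)$ (using $e\leq\sigma(t)$ and $tet\leq\sigma(t)$ because $t$ fixes $\sigma(t)$), so $\phi(g)\geq\phi(t)$ and $g$ acts trivially on all of $U_{\phi(t)}$ by Proposition~\ref{prop:labor}. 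Thus $gFg=F$ there, and Lemma~\ref{lem:booboo} yields $g\in Z_t$. Finally, the relation $sFs=tFt$ for $F\in U_{\sigma(s)}$ is just a restatement of the (F2) output $gFg^{-1}=tFt$ combined with $g^2=1$ (so $g^{-1}=g$). Setting $s=g$ completes the construction, and the only step requiring real care is correctly invoking Lemma~\ref{lem:booboo} and checking the support inequality $tet\leq\sigma(t)$ that makes $\phi(g)\geq\phi(t)$ hold.
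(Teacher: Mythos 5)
There is a genuine gap, and it sits exactly where the real work of this lemma lies. Your construction of $s$ via (F2), the support bound $\sigma(s)\leq e\vee tet\leq\sigma(t)$, and the verifications that $s^{2}=1$ and $s\in C_{t}$ by testing against ultrafilters are all sound and match the paper's strategy. But your route to the membership $s\in Z_{t}$ rests on a misreading of Lemma~\ref{lem:booboo}. That lemma asserts only one implication: if $s\in Z_{t}$ and $F\in U_{\phi(t)}$ then $sFs=F$. Its contrapositive lets you conclude $s\notin Z_{t}$ from the existence of a bad ultrafilter in $U_{\phi(t)}$; it does \emph{not} say that fixing every ultrafilter in $U_{\phi(t)}$ is \emph{sufficient} for membership in $Z_{t}$. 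Your phrase ``fails to lie in $Z_{t}$ precisely when'' converts a necessary condition into a characterization, and the converse direction is false in general: an involution $s\in C_{t}$ with $\sigma(s)\leq\sigma(t)$ need not commute with all of its conjugates $asa^{-1}$, $a\in C_{t}$. (Think of the model case of a full group on the Cantor set: an involution supported inside $\sigma(t)$ that commutes with $t$ but does not locally agree with $t$ will typically fail to commute with some conjugate of itself by an element of $C_{t}$.) What makes the $s$ of this lemma land in $Z_{t}$ is not merely that its support lies under $\sigma(t)$, but the much stronger property that $sFs=tFt$ on $U_{\sigma(s)}$, i.e.\ that $s$ acts \emph{like $t$} wherever it acts at all.

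Consequently the quantified condition $(\forall a\in C_{t})\,[s,asa^{-1}]=1$ still has to be verified directly, and this is the bulk of the paper's proof: for each $a\in C_{t}$ one tests $[s,asa^{-1}]$ against ultrafilters, splitting into the cases $F\in U_{\sigma(s)}$ and $F\in U_{\phi(s)}$, and within each case tracking whether $\sigma(s)$ or $\sigma(asa^{-1})$ lies in the relevant conjugated ultrafilter, repeatedly exchanging $s$ for $t$ via $sGs=tGt$ on $U_{\sigma(s)}$ and using $[a,t]=1$ to move $t$ past $a$. None of that computation can be bypassed by Lemma~\ref{lem:booboo}. To repair your proof you should delete the appeal to a converse of Lemma~\ref{lem:booboo} and supply this case analysis explicitly.
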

\begin{proof} By (F2),  we may find an element $s$ satisfying all the given conditions except possibly $s \in Z_{t}$.
We therefore need to verify three conditions:
$s^{2} = 1$, $[s,t] = 1$ and $s$ commutes with every element of the form $asa^{-1}$ where $a \in C_{t}$.

To show that $s^{2} = 1$, it is enough to let it act on those ultrafilters in $U_{\sigma (s)}$.
We calculate $s^{2}Fs^{-2}$.
We have that $sFs^{-1} = tFt$.
Now $\sigma (s) \in F$ implies that $s \sigma (s)s^{-1} \in sFs^{-1}$.
But $s \sigma (s)s^{-1} = \sigma (s)$.
Thus $\sigma (s) \in tFt$.
It follows that $s(tFt)s^{-1} = t^{2}Ft^{2} = F$.
We have therefore proved that $s^{2} = 1$ and so $s^{-1} = s$.

We next prove that $[s,t] = 1$.
To do this, we shall prove that $stst = 1$ by testing it against all ultrafilters in $E(S)$.
There are two cases to check.
First let $F \in U_{\sigma (s)}$.
We calculate $ststFtsts$.
We have that $tFt = sFs$.
Thus 
$$sts(tFt)sts
=
sts(sFs)sts
=
s(tFt)s
=
s(sFs)s
=
F$$
since $s^{2} = 1$.
Now let $F \in U_{\phi (s)}$.
Suppose that $\phi (s) \notin tFt$.
Then $\sigma (s) \in tFt$.
Thus $s(tFt)s = ttFtt = F$.
Hence $s\sigma (s)s \in F$ and so $\sigma (s) \in F$, which is a contradiction.
It follows that $\phi (s) \in tFt$.
Thus $sts(tFt)sts = st (tFt) ts = sFs = F$.
We have therefore proved that in all cases $stst$ acts as the identity and so is the identity.

Finally, let $a \in C_{t}$.
We shall prove that $s$ commutes with $asa^{-1}$.
There are two cases to check.
First let $F \in U_{\sigma (s)}$.
Then
$$asa^{-1} sFs asa^{-1} 
= asa^{-1} tFt asa^{-1}
= t asa^{-1} F asa^{-1} t.$$
Now $\sigma (s) \in F$ implies that $\sigma (asa^{-1}) \vee \sigma (s) \in asa^{-1}Fasa^{-1}$.
There are now two cases.
First, suppose that $\sigma (s) \in asa^{-1}Fasa^{-1}$.
Then
$ t asa^{-1} F asa^{-1} t = s asa^{-1} F asa^{-1} s$
and in this case commutativity holds.
Second,  suppose that
$\sigma (asa^{-1}) \in  asa^{-1}Fasa^{-1}$.
Then $\sigma (s) \in a^{-1}Fa$.
We have that
$$asa^{-1} sFs asa^{-1} 
= asa^{-1} tFt asa^{-1}
= t as(a^{-1} F a)sa^{-1} t
= F$$
and
$$s as(a^{-1} F a)sa^{-1} s 
=
s at(a^{-1} F a)ta^{-1} s 
=
tsFst
= F.$$
Now let $F \in U_{\phi (s)}$.
Then
$$asa^{-1} s F s asa^{-1} = asa^{-1} F asa^{-1}.$$
We now calculate $s asa^{-1}F asa^{-1} s$.
If $\phi (s) \in asa^{-1} F asa^{-1}$ then the result follows.
We may therefore assume that $\sigma (s) \in asa^{-1} F asa^{-1}$.
It follows that $\sigma (asa^{-1} s asa^{-1}) \in F$.
But  $\sigma (asa^{-1} s asa^{-1}) \leq \sigma (asa^{-1}) \vee \sigma (s)$.
It follows that $\sigma (asa^{-1})$ or $\sigma (s)$ belongs to $F$.
But $\phi (s) \in F$ and so $\sigma (s) \notin F$.
Therefore $\sigma (asa^{-1}) \in F$.
Then $\sigma (s) \in a^{-1}Fa$.
It follows that
$$asa^{-1}Fasa^{-1} = ata^{-1}Fata^{-1} = tFt.$$
Hence
$$asa^{-1} s F s asa^{-1} = asa^{-1} F asa^{-1} = tFt,$$
and
$$s asa^{-1}F asa^{-1} s 
= st F ts = tFt$$
where the final equality is obtained from the fact that $\phi (s) \in F$ and so $\phi (s) \in sFs$.
\end{proof}

\begin{lemma}\label{lem:matui_three} 
Let $S$ be a fundamental Boolean inverse $\wedge$-monoid satisfying (F1), (F2) and (F3).
Let $e$ be any non-zero idempotent such that $e \sigma (t) = 0$.
Then there exists an element $a$ such that $\sigma (a) \leq e$ and $a^{2} \in S_{t}$ is non-trivial.
Observe that $\sigma (a^{2}) \leq \sigma (a) \leq e$.
\end{lemma}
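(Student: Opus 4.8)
The plan is to produce the required element directly from axiom (F3) and then verify membership in $S_{t}$ by a support-disjointness argument. Since $e \neq 0$, axiom (F3) supplies a non-involution unit $a$ with $\sigma(a) \leq e$; in particular $a^{2} \neq 1$, so $a^{2}$ is non-trivial, and Lemma~\ref{lem:midge}(3) gives $\sigma(a^{2}) \leq \sigma(a) \vee \sigma(a) = \sigma(a) \leq e$, which is the asserted final observation. It therefore remains only to show that $a^{2} \in S_{t}$, that is, that $[a,s] = 1$ for every $s \in Z_{t}$.

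The heart of the matter is to show that every $s \in Z_{t}$ satisfies $\sigma(s) \leq \sigma(t)$. Here I would use Lemma~\ref{lem:booboo}: for $s \in Z_{t}$ and any $F \in U_{\phi(t)}$ we have $sFs = F$, and since $s$ is an involution $sFs^{-1} = F$. Because $t$ is a unit, $\phi(t) = \overline{\sigma(t)}$, so $U_{\phi(t)}$ and $U_{\sigma(t)}$ are complementary clopen subsets of the structure space. Consequently the set $\{F \colon sFs^{-1} \neq F\}$ is disjoint from $U_{\phi(t)}$, hence contained in the closed set $U_{\sigma(t)}$, and taking closures gives $U_{\sigma(s)} = \mathsf{cl}(\{F \colon sFs^{-1} \neq F\}) \subseteq U_{\sigma(t)}$ by Proposition~\ref{prop:labor}. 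Translating back through the Stone duality of Boolean algebras (that is, $U_{i} \subseteq U_{j}$ iff $i \leq j$) yields $\sigma(s) \leq \sigma(t)$.

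With this in hand the conclusion is immediate. Since $\sigma(a) \leq e$ and $e\sigma(t) = 0$, we have $\sigma(a) \leq \overline{\sigma(t)}$, and hence for each $s \in Z_{t}$ the bound $\sigma(s) \leq \sigma(t)$ forces $\sigma(a)\sigma(s) \leq \sigma(a)\sigma(t) = 0$. Lemma~\ref{lem:jerry}(2) then gives $[a,s] = 1$ for all $s \in Z_{t}$, so $a^{2} \in S_{t}$ as required.

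I expect the second paragraph --- the passage from the conjugation-fixing property of Lemma~\ref{lem:booboo} to the concrete support inequality $\sigma(s) \leq \sigma(t)$ --- to be the only genuine step; the appeal to (F3) and the closing commutator computation are routine given the earlier machinery. The subtle point to get right there is that one must pass to closures in Proposition~\ref{prop:labor} but may do so harmlessly precisely because $U_{\sigma(t)}$ is \emph{clopen}, so that membership of the supporting set in $U_{\sigma(t)}$ is preserved under closure.
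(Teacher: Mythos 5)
Your proof is correct, and it takes a recognisably different route from the paper's in its second half. The paper also starts from (F3) to get a unit $a$ with $a^{2}\neq 1$ and $\sigma(a)\leq e$, and also leans on Lemma~\ref{lem:booboo}, but it then verifies $as=sa$ for $s\in Z_{t}$ by testing the two products against ultrafilters directly, splitting into the cases $\phi(t)\in F$ and $\sigma(t)\in F$ and chasing conjugates through each. You instead convert Lemma~\ref{lem:booboo} into the clean support inequality $\sigma(s)\leq\sigma(t)$ for every $s\in Z_{t}$ (via Proposition~\ref{prop:labor}, using that $U_{\sigma(t)}=U_{\overline{\phi(t)}}$ is clopen so the closure causes no loss), and then finish with the disjoint-support commutation criterion of Lemma~\ref{lem:jerry}(2). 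This is shorter and isolates a reusable fact about $Z_{t}$ that the paper leaves implicit; the paper's case analysis, by contrast, avoids invoking Proposition~\ref{prop:labor} and stays entirely within the ``testing against ultrafilters'' technique it uses throughout this section. One cosmetic remark: the paper's proof also records $[a,t]=1$ en route (needed because $t\in Z_{t}$, and useful later); in your version this is subsumed by the general claim since $\sigma(t)\leq\sigma(t)$ trivially, so nothing is missing.
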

\begin{proof} By (F3), there exists a unit $a$ such that $a^{2} \neq 1$ and $\sigma (a) \leq e$.
From Lemma~\ref{lem:jerry} and $\sigma (a) \sigma (t) = 0$ we have that $[a,t] = 1$.
Let $s \in Z_{t}$.
We prove that $[a,s] = 1$.
By Lemma~\ref{lem:booboo}, we have that $s \in Z_{t}$ and $F \in U_{\phi (t)}$ implies that $sFs = F$.
We prove that $sa = as$ by testing these two elements against ultrafilters.

Suppose first that $\sigma (t) \notin F$.
Then $\phi (t) \in F$.
It follows that $asFsa^{-1} = aFa^{-1}$.
Now $\phi (t) \in F$ implies that $a\phi (t)a^{-1} \in  aFa^{-1}$.
But since $a$ and $t$ commute, we have that $\phi (t) = a \phi (t) a^{-1}$.
Thus $\phi (t) \in aFa^{-1}$.
It follows that $saFa^{-1}s = aFa^{-1}$.

Now suppose that $\sigma (t) \in F$.
Then $\phi (a) \in F$ and so $saFa^{-1}s = sFs$.
Now $\sigma (t) \in F$ implies that $s \sigma (t) s \in sFs$.
But $s$ and $t$ commute and so $s \sigma (t)s = \sigma (t)$.
Thus $\sigma (t) \in sFs$.
Then $\phi (a) \in sFs$ and so $a(sFs)a^{-1} = sFs$.
\end{proof}

\begin{lemma}\label{lem:matui_four}
Let $S$ be a fundamental Boolean inverse $\wedge$-monoid satisfying (F1), (F2) and (F3).
If $b \in S_{t}$ and $F \in U_{\sigma (t)}$ then $bFb^{-1} = F$.
\end{lemma}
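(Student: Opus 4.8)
The plan is to argue by contradiction, working throughout with the conjugation action $F \mapsto gFg^{-1}$ of a unit $g$ on the structure space $\mathsf{X}(S)$; write $g\cdot F$ for $gFg^{-1}$. Fix a root $a$ of $b$, so $b = a^2$ and $a$ (hence also $a^{-1}$) commutes with every element of $Z_t$. First I reduce to ultrafilters genuinely moved by $t$. The set $\{F : a^2 \cdot F = F\}$ is closed, since by Lemma~\ref{lem:pingo}(1) its complement $\{F : a^2\cdot F \neq F\}$ is open, while by Proposition~\ref{prop:labor} the set of $F$ with $t\cdot F \neq F$ is dense in the clopen set $U_{\sigma(t)}$. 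Hence it is enough to prove $a^2\cdot F = F$ for every $F \in U_{\sigma(t)}$ with $t\cdot F \neq F$; the general case then follows by density and closedness. So suppose, for contradiction, that $F \in U_{\sigma(t)}$, $t\cdot F \neq F$, and $a^2 \cdot F \neq F$.

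The crucial consequence of squaring is that the three ultrafilters $F$, $a\cdot F$ and $a^{-1}\cdot F$ are pairwise distinct: were any two equal, or were $a\cdot F = F$, one would get $a^2\cdot F = F$. Moreover $a\cdot F$ and $a^{-1}\cdot F$ cannot both coincide with $t\cdot F$, so at least one of $c = a$, $c = a^{-1}$ satisfies $c\cdot F \neq t\cdot F$; fix such a \emph{clean} $c$. Since $F$, $c\cdot F$ and $t\cdot F$ are now three pairwise distinct points of a Hausdorff space, Lemma~\ref{lem:pingo}(1) lets me choose an idempotent $e \in F$ with $e \leq \sigma(t)$ and with the three orthogonalities $e \perp tet$, $cec^{-1} \perp e$, and $cec^{-1} \perp tet$ holding simultaneously (take the meet of the three separating neighbourhoods). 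Now apply Lemma~\ref{lem:matui_two} to this $e$ to produce a \emph{non-trivial} $s \in Z_t$ with $\sigma(s) \leq e \vee tet$.

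The key point is that I do not need the full ``$s$ acts like $t$'' strength of Lemma~\ref{lem:matui_two}, only that $s \in Z_t$ is non-trivial with support inside $e \vee tet$. Since $s \in C_t$ we have $ts = st$, whence $tst = s$ and therefore, by Lemma~\ref{lem:jerry}(3), $\sigma(s)$ is $t$-invariant: $t\sigma(s)t = \sigma(s)$. Writing $\sigma_e = \sigma(s) \wedge e$, the bound $\sigma(s) \leq e \vee tet$ together with $e \perp tet$ forces $\sigma(s) = \sigma_e \vee t\sigma_e t$, and $\sigma_e \neq 0$ because $s$ is non-trivial. Because $a^2 = b \in S_t$, the chosen $c$ commutes with $s$, so $csc^{-1} = s$ and hence, again by Lemma~\ref{lem:jerry}(3), $c\sigma(s)c^{-1} = \sigma(s)$. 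Consequently $c\sigma_e c^{-1} \leq \sigma(s) = \sigma_e \vee t\sigma_e t$; since $cec^{-1} \perp e$ gives $c\sigma_e c^{-1} \perp \sigma_e$, and $\sigma_e \perp t\sigma_e t$, a Boolean-algebra squeeze yields $c\sigma_e c^{-1} \leq t\sigma_e t$. But $cec^{-1} \perp tet$ forces $c\sigma_e c^{-1} \perp t\sigma_e t$, so $c\sigma_e c^{-1} = 0$ and hence $\sigma_e = 0$, a contradiction.

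The step I expect to be the main obstacle is the potential coincidence $a\cdot F = t\cdot F$ (or $a^{-1}\cdot F = t\cdot F$), which would destroy the orthogonality $cec^{-1} \perp tet$ that drives the final contradiction. This is exactly why the hypothesis concerns the \emph{square} $b = a^2$ rather than $a$ itself: squaring guarantees the three distinct points $F, a\cdot F, a^{-1}\cdot F$, and since at most one of the two translates can coincide with $t\cdot F$, a clean side always survives. The remaining care is in the preliminary reduction --- verifying that the moved sets are open and the fixed set closed (via Lemma~\ref{lem:pingo}(1) and Proposition~\ref{prop:labor}) so that density lets me assume $t\cdot F \neq F$ --- and in checking $\sigma_e \neq 0$, for which the $t$-invariance of $\sigma(s)$ is essential.
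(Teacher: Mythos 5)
Your proof is correct, and while its skeleton matches the paper's (reduce to ultrafilters moved by $t$; assume $b$ moves $F$; separate the relevant points by orthogonal idempotents; invoke Lemma~\ref{lem:matui_two} to get $s \in Z_{t}$ supported in $e \vee tet$; contradict the fact that a root of $b$ commutes with $s$), the two key steps are genuinely different. The paper first proves an intermediate dichotomy for the root $a$ itself --- if $tFt \neq F$ then $aFa^{-1} = F$ or $aFa^{-1} = tFt$ --- and then squares, handling the $t$-fixed ultrafilters afterwards by a separate separation argument; its contradiction is obtained by computing the two conjugation actions $s(aFa^{-1})s$ and $a(sFs)a^{-1}$ on the ultrafilter, which uses the full strength of Lemma~\ref{lem:matui_two} (that $s$ acts like $t$ on $U_{\sigma(s)}$) and, as written, also the membership $\sigma(s) \in F$. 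You instead exploit the square directly: since $F$, $aFa^{-1}$, $a^{-1}Fa$ are pairwise distinct, a ``clean'' root $c \in \{a, a^{-1}\}$ with $cFc^{-1} \neq tFt$ always exists, and your final contradiction is a purely Boolean computation with supports via Lemma~\ref{lem:jerry}(3), using only that $s \in Z_{t}$ is non-trivial with $\sigma(s) \leq e \vee tet$. This buys two things: you never need the ``$sGs = tGt$'' clause of Lemma~\ref{lem:matui_two}, and you never need $\sigma(s)$ to lie in any particular ultrafilter --- a point the paper's proof glosses over (``from $\sigma(s) \in G$'') and which is not obviously guaranteed by the statement of Lemma~\ref{lem:matui_two}. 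What the paper's route buys is the stronger standalone dichotomy for $a$, which mirrors Matui's original lemma. The one thing you use that is not literally in the statement of Lemma~\ref{lem:matui_two} is the non-triviality of $s$ (needed for $\sigma_{e} \neq 0$); this is immediate from its proof, since (F2) supplies a non-trivial unit, and the paper's own argument tacitly relies on it as well, so it is not a gap --- just worth flagging explicitly.
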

\begin{proof} We shall prove the following claim:
let $a$ be a unit that commutes with every element in $Z_{t}$,
then if $tFt \neq F$ then either $aFa^{-1} = F$ or $aFa^{-1} = tFt$.

We show first why this claim proves the lemma.
Let $b \in S_{t}$.
Then $b = a^{2}$, where $a$ commutes with every element of $Z_{t}$.
Let $F \in U_{\sigma (t)}$.
First suppose that $tFt \neq F$.
If $aFa^{-1} = F$ then $bFb^{-1} = F$.
If $aFa^{-1} = tFt$ then $bFb = atFta^{-1} = taFa^{-1}F = ttFtt = F$
where we use the fact that $a$ commutes with $t$.
Now suppose that $tFt = F$.
We shall prove that $a^{2}Fa^{-2} = F$.
Suppose not.
Then $a^{2}Fa^{-2} \neq F$.
By Lemma~\ref{lem:pingo}, there exists an idempotent $e \in F$ such that $e \perp a^{2}ea^{-2}$ and $e \leq \sigma (t)$.
By Proposition~\ref{prop:labor}, there is an element $G \in U_{e}$ such that $tGt \neq G$.
It follows that $a^{2}Ga^{-2} = G$.
But then $e a^{2}ea^{-2} \neq 0$, which is a contradiction.

We now prove the claim.
Let $F$ be an ultrafilter in $E(S)$ containing $\sigma (t)$.
Suppose that to the contrary, $aFa^{-1}$, $F$ and $tFt$ are distinct.
We may therefore find a non-zero idempotent $e \leq \sigma (t)$ such that $e \in F$ are
$e$, $tet$ and $aea^{-1}$ are mutually orthogonal.
We now use Lemma~\ref{lem:matui_two}.
There exists $s \in Z_{t}$ such that $\sigma (s) \leq e \vee tet$ and $sGs = tGt$ for all $G \in U_{\sigma (s)}$.

We prove that $a$ and $s$ cannot commute which is a contradiction.
We claim that $saGa^{-1}s = aGa^{-1}$.
Accepting this, from $\sigma (s) \in G$, we get that $asGsa^{-1} = atGta^{-1}$.
Thus we need to prove that $aGa^{-1} \neq atGta^{-1}$.
From $\sigma (s) \in G$, we get that $e \vee tet \in G$.
But $e \perp tet$ thus $e \in G$ or $tet \in G$ but not both.
Suppose that $e \in G$.
Then $aea^{-1} \in aGa^{-1}$ and $ateta^{-1} \in atGa^{-1}t$.
But $e \perp tet$ implies that $aea^{-1} \perp ateta^{-1}$ and so $aGa^{-1} \neq atGta^{-1}$.
Suppose that $tet \in G$.
Then $ateta^{-1} \in aGa^{-1}$ and $aea^{-1} \in atGa^{-1}t$.
Thus once again $aGa^{-1} \neq atGta^{-1}$.

It therefore only remains to prove the claim that  $saGa^{-1}s = aGa^{-1}$.
Either $e \in G$ or $tet \in G$ but not both.
Suppose that $e \in G$.
Then $aea^{-1} \in aGa^{-1}$.
But $aea^{-1}$ is othogonal to $e \vee tet$.
Thus $\sigma (s) \notin aGa^{-1}$ and so $\phi (s) \in aGa^{-1}$.
Hence $saGa^{-1}s = aGa^{-1}$.

We shall now prove that $tet \notin G$.
Suppose to the contrary that $tet \in G$.
Then $ateta^{-1}  \in aGa^{-1}$ and so $t(aea^{-1})t \in aGa^{-1}$.
Since $a$ and $s$ commute, we have that $\sigma (s) \in aGa^{-1}$.
It follows that $e \vee tet \in aGa^{-1}$.
Thus $tet \vee e \in t(aGa^{-1})t$.
But $t(aea^{-1})t$ and $tet \vee t$ are orthogonal, which is a contradiction.
\end{proof}

The key result of this section is the following.

\begin{proposition}\label{prop:matui_five}
Let $S$ be a fundamental Boolean inverse $\wedge$-monoid satisfying (F1), (F2) and (F3).
Then
$$W_{t} = U(\sigma (t)).$$
\end{proposition}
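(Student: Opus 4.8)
The plan is to establish the two inclusions $U(\sigma(t)) \subseteq W_t$ and $W_t \subseteq U(\sigma(t))$ separately, throughout exploiting that $S$ is fundamental, so that two units are equal precisely when they induce the same conjugation action on the ultrafilters of $E(S)$, and that by Proposition~\ref{prop:labor} the support $\sigma(g)$ of a unit $g$ is exactly the clopen idempotent whose set of ultrafilters is the closure of $\{F : gFg^{-1} \neq F\}$.

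For the inclusion $U(\sigma(t)) \subseteq W_t$, the key observation is that every $b \in S_t$ satisfies $\sigma(b) \leq \phi(t)$. Indeed, by Lemma~\ref{lem:matui_four} each $b \in S_t$ fixes every ultrafilter of $U_{\sigma(t)}$, so the set $\{F : bFb^{-1} \neq F\}$ is contained in the clopen complement $U_{\phi(t)}$ of $U_{\sigma(t)}$ (using $\overline{\sigma(t)} = \phi(t)$ for the unit $t$); taking closures and applying Proposition~\ref{prop:labor} yields $\sigma(b) \leq \phi(t)$. Now if $g \in U(\sigma(t))$, then $\sigma(g) \leq \sigma(t)$ and hence $\sigma(g)\sigma(b) \leq \sigma(t)\phi(t) = 0$ for every $b \in S_t$, so $[g,b] = 1$ by Lemma~\ref{lem:jerry}(2). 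Thus $g \in W_t$.

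For the reverse inclusion I would argue by contradiction. Suppose $a \in W_t$ but $\sigma(a) \nleq \sigma(t)$, so that $e := \sigma(a)\phi(t) \neq 0$. Since $e \leq \sigma(a)$, the open set $U_e$ meets $\{F : aFa^{-1} \neq F\}$ by Proposition~\ref{prop:labor}, giving an ultrafilter $F \in U_e$ with $aFa^{-1} \neq F$. By Lemma~\ref{lem:pingo}(1), after meeting the resulting idempotent with $e$, I obtain a non-zero idempotent $f \in F$ with $f \leq e$ and $f \perp afa^{-1}$. As $f \leq e \leq \phi(t)$ we have $f\sigma(t) = 0$, so Lemma~\ref{lem:matui_three} produces a \emph{non-trivial} element $b \in S_t$ with $\sigma(b) \leq f$. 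Since $a \in W_t$, necessarily $[a,b] = 1$, i.e.\ $aba^{-1} = b$. But $\sigma(aba^{-1}) = a\sigma(b)a^{-1} \leq afa^{-1}$ by Lemma~\ref{lem:jerry}(3), while $\sigma(b) \leq f$ and $f \perp afa^{-1}$; hence $\sigma(b) \leq f \cdot afa^{-1} = 0$, forcing $b = 1$ by Lemma~\ref{lem:midge}(1), contradicting the non-triviality of $b$. Therefore $\sigma(a) \leq \sigma(t)$ and $a \in U(\sigma(t))$.

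The substantive step is the second inclusion, and the main obstacle is manufacturing, from the mere assumption that $a$ moves some ultrafilter off the support of $t$, a concrete member of $S_t$ witnessing non-commutativity with $a$; this is exactly what Lemma~\ref{lem:matui_three} supplies, namely a non-trivial square in $S_t$ supported beneath any prescribed idempotent orthogonal to $\sigma(t)$, once Lemma~\ref{lem:pingo} has localized the motion of $a$ to such an idempotent $f$. Everything else is bookkeeping with supports via the dictionary of Proposition~\ref{prop:labor} and the elementary support identities of Lemma~\ref{lem:midge} and Lemma~\ref{lem:jerry}, together with fundamentality to pass between equality of units and equality of their conjugation actions.
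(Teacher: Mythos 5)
Your proof is correct, and its overall architecture (two inclusions, resting on Lemma~\ref{lem:matui_four} for one direction and Lemma~\ref{lem:matui_three} plus Lemma~\ref{lem:pingo} for the other) matches the paper's. The one place where you genuinely diverge is the inclusion $U(\sigma(t)) \subseteq W_{t}$: the paper verifies $gb=bg$ by a case-by-case computation of conjugation actions on ultrafilters in $U_{\sigma(t)}$ and $U_{\phi(t)}$, whereas you first extract from Lemma~\ref{lem:matui_four} and Proposition~\ref{prop:labor} the clean intermediate statement that every $b \in S_{t}$ satisfies $\sigma(b) \leq \phi(t)$, and then conclude at once from Lemma~\ref{lem:jerry}(2) since $\sigma(g)\sigma(b) \leq \sigma(t)\phi(t) = 0$. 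That is a genuine streamlining: it isolates a reusable fact about $S_{t}$ (its elements are supported off the support of $t$) and replaces the ultrafilter bookkeeping with the disjoint-supports-commute lemma; the closure step is sound because $U_{\phi(t)}$ is clopen. For the reverse inclusion your argument is essentially the paper's, reorganized: you localize the hypothetical motion of $a$ inside $U_{\sigma(a)\phi(t)}$ directly via Proposition~\ref{prop:labor} and close the contradiction with a support computation, whereas the paper fixes an arbitrary $F \in U_{\phi(t)}$, derives the same contradiction, and then invokes Lemma~\ref{lem:lisboa} (which is itself proved from Proposition~\ref{prop:labor}) to convert pointwise fixing into the inequality $\sigma(a) \leq \sigma(t)$. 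The two versions are logically equivalent; yours trades one auxiliary lemma for a slightly more explicit topological step at the outset.
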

\begin{proof} We prove first that $U(\sigma (t)) \subseteq W_{t}$.
Let $g \in U(\sigma (t))$.
Then $\sigma (g) \leq \sigma (t)$.
We need to prove that $g$ commutes with every element of $S_{t}$.
By Lemma~\ref{lem:matui_four}, $b \in S_{t}$, and $\sigma (t) \in F$ implies that $bFb^{-1} = F$.
Observe that since $b$ acts as the identity on $U_{\sigma (t)}$, it must map $U_{\phi (t)}$ to itself.
Suppose first that $F \in U_{\sigma (t)}$.
Then
$gbFb^{-1}g^{-1} = gFg^{-1}$.
Since $\sigma (g) \leq \sigma (t)$, we have that $\phi (t) \leq \phi (g)$.
Thus $g$ is the identity on $U_{\phi (t)}$ and therefore maps $U_{\sigma (t)}$ to itself.
It follows that $\sigma (t) \in gFg^{-1}$ and so 
$bgFg^{-1}b^{-1} = gFg^{-1}$.
Now suppose that $F \in U_{\phi (t)}$.
Then $gbFb^{-1}g^{-1} = bFb^{-1}$
and
$bgFg^{-1}b^{-1} = bFb^{-1}$,
where we have used our observations above.

We now prove that $W_{t} \subseteq U(\sigma (t))$.
Let $a \in W_{t}$ and $\sigma (t) \notin F$.
Suppose that $aFa^{-1} \neq F$.
Then there is a non-zero idempotent $e \in F$ such that $e \sigma (t) = 0$ and $e(aea^{-1}) = 0$.
By Lemma~\ref{lem:matui_three}, there exists a non-trivial $b \in S_{t}$ such that $\sigma (b) \leq e$.
We claim that $b \neq aba^{-1}$.
To see why, observe that $\sigma (b) a \sigma (b) a^{-1} = 0$ and so $\sigma (b) \sigma (aba^{-1}) = 0$,
from which it immediately follows that $b \neq aba^{-1}$.
We may now apply Lemma \ref{lem:lisboa} to deduce that $\sigma (a) \leq \sigma (t)$.
\end{proof}

\subsection{Constructing the isomorphism}

In the previous section, we saw that local subgroups of the form $U(\sigma (t))$, where $t$ is an involution, can be characterized algebraically.
This is the main result needed in what follows.

\begin{lemma}\label{lem:splott} Let $S_{1}$ and $S_{2}$ be Tarski monoids satisfying conditions (F1), (F2) and (F3).
Let $\theta \colon U(S_{1}) \rightarrow U(S_{2})$ be an isomorphism of unit groups.
Let $s,t \in U(S_{1})$ be involutions.
Then
\begin{enumerate}

\item $\sigma (t) \leq \sigma (s)$ if, and only if, $\sigma (\theta (t)) \leq \sigma (\theta (s))$.

\item $\sigma (t) \perp \sigma (s)$ if, and only if, $\sigma (\theta (t)) \perp \sigma (\theta (s))$.

\end{enumerate}
\end{lemma}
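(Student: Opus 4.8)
The plan is to reduce both equivalences to the single fact that the group isomorphism $\theta$ carries the local subgroup $U(\sigma(t))$ onto $U(\sigma(\theta(t)))$. First I would observe that each of the sets $C_{t}$, $Z_{t}$, $S_{t}$ and $W_{t}$ is defined using only the group multiplication, inversion, and the purely algebraic condition of squaring to the identity. Since $\theta$ is an isomorphism of unit groups, it sends involutions to involutions (as $\theta(t)^{2} = \theta(t^{2}) = \theta(1) = 1$), preserves centralizers, and respects each of the commutator conditions defining $Z_{t}$, $S_{t}$ and $W_{t}$. Consequently $\theta(C_{t}) = C_{\theta(t)}$, $\theta(Z_{t}) = Z_{\theta(t)}$, $\theta(S_{t}) = S_{\theta(t)}$ and, most importantly, $\theta(W_{t}) = W_{\theta(t)}$. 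By Proposition~\ref{prop:matui_five}, applied in both $S_{1}$ and $S_{2}$, we have $W_{t} = U(\sigma(t))$ and $W_{\theta(t)} = U(\sigma(\theta(t)))$, so that
$$\theta(U(\sigma(t))) = U(\sigma(\theta(t)))$$
for every involution $t$. This identity is the linchpin of the whole argument.

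For part (1), I would invoke Lemma~\ref{lem:tom}(3), valid because (F1) and (F3) hold, to rewrite the order relation in terms of local subgroups: $\sigma(t) \leq \sigma(s)$ if and only if $U(\sigma(t)) \subseteq U(\sigma(s))$. An isomorphism both preserves and reflects inclusions of subgroups, so this is equivalent to $\theta(U(\sigma(t))) \subseteq \theta(U(\sigma(s)))$, which by the linchpin is $U(\sigma(\theta(t))) \subseteq U(\sigma(\theta(s)))$. A final application of Lemma~\ref{lem:tom}(3), now in $S_{2}$, turns this back into $\sigma(\theta(t)) \leq \sigma(\theta(s))$, giving the claimed equivalence.

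For part (2), I would first note that $U(e) \cap U(f) = U(e \wedge f)$, since $\sigma(g) \leq e$ together with $\sigma(g) \leq f$ amounts to $\sigma(g) \leq e \wedge f$. Recalling that for idempotents $\sigma(t) \perp \sigma(s)$ means exactly $\sigma(t) \wedge \sigma(s) = 0$, I would then show that orthogonality is equivalent to $U(\sigma(t)) \cap U(\sigma(s)) = \{1\}$: indeed $U(0) = \{1\}$ by Lemma~\ref{lem:midge}(1), and conversely $U(e) = \{1\}$ forces $e = 0$ by Lemma~\ref{lem:tom}(3). Since $\theta$ preserves intersections and the trivial subgroup, the linchpin transports this triviality condition to $U(\sigma(\theta(t))) \cap U(\sigma(\theta(s))) = \{1\}$, which is precisely $\sigma(\theta(t)) \perp \sigma(\theta(s))$.

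The step I expect to be delicate, rather than genuinely hard, is the first one: one must check line by line that the defining predicates of $C_{t}$, $Z_{t}$, $S_{t}$ and $W_{t}$ involve nothing beyond the group operations, so that they are faithfully transported by $\theta$. Once the linchpin $\theta(U(\sigma(t))) = U(\sigma(\theta(t)))$ is secured, both equivalences are formal consequences of the dictionary between the Boolean algebra of supports and the lattice of local subgroups furnished by Lemma~\ref{lem:tom}(3).
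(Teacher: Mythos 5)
Your proof is correct, and for part (1) it is essentially the paper's own argument: both hinge on the chain $\sigma(t)\leq\sigma(s) \Leftrightarrow U(\sigma(t))\subseteq U(\sigma(s))$ (Lemma~\ref{lem:tom}(3)) $\Leftrightarrow W_{t}\subseteq W_{s}$ (Proposition~\ref{prop:matui_five}), with the middle condition transported by $\theta$ because $W_{t}$ is defined by purely group-theoretic predicates. For part (2) you take a mildly different route. The paper argues via (F1): from $\sigma(t)\sigma(s)\neq 0$ it extracts a non-trivial involution $r$ with $\sigma(r)\leq\sigma(t)\sigma(s)$, pushes $r$ through part (1), and concludes $\sigma(\theta(t))\sigma(\theta(s))\neq 0$ from $\sigma(\theta(r))\neq 0$. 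You instead use the identity $U(e)\cap U(f)=U(ef)$ together with the observation that $U(e)=\{1\}$ if and only if $e=0$ (one direction from Lemma~\ref{lem:midge}(1), the other from (F3) via Lemma~\ref{lem:tom}(3)), and then transport the triviality of the intersection through the ``linchpin'' $\theta(U(\sigma(t)))=U(\sigma(\theta(t)))$. The two arguments lean on the same axioms in slightly different places ((F1) versus (F3)); yours is arguably a little cleaner since it avoids introducing the auxiliary involution $r$ and is manifestly symmetric in both directions, whereas the paper proves one implication and appeals to $\theta^{-1}$ for the other. Both are sound.
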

\begin{proof} (1) Suppose that $\sigma (t) \leq \sigma (s)$.
Then by Lemma~\ref{lem:tom}, we have that $U(\sigma (t)) \leq U(\sigma (s))$.
By Proposition~\ref{prop:matui_five}, we have that $W_{t} \leq W_{s}$.
We now work entirely within the groups of units.
We get $\theta (W_{t}) \leq \theta (W_{s})$ and so $W_{\theta (t)} \leq W_{\theta (s)}$.
By Proposition~\ref{prop:matui_five}, we get that $U(\sigma (\theta (t))) \leq U(\sigma (\theta (s)))$.
Thus by Lemma~\ref{lem:tom}, we get that $\sigma (\theta (t)) \leq \sigma (\theta (s))$.
The reverse implication follows since $\theta^{-1}$ is an isomorphism, and the result now follows.

(2) Suppose that $\sigma (t) \sigma (s) \neq 0$.
By (F1), there exists a non-trivial involution $r$ such that $\sigma (r) \leq \sigma (t)\sigma (s)$.
By Lemma~\ref{lem:tom}, we have that
$U(\sigma (r)) \leq U(\sigma (t)) \cap U(\sigma (s))$.
By part (1), we have that $U(\sigma (\theta (r))) \leq U(\sigma (\theta(t))) \cap U(\sigma (\theta(s)))$,
and so by Lemma~\ref{lem:tom}, we have that $\sigma (\theta (r)) \leq \sigma (\theta (t)) \sigma (\theta (s))$.
In particular, $\sigma (\theta (t)) \sigma (\theta (s)) \neq 0$.
\end{proof}

If $F \subseteq E(S)$ is an ultrafilter, we denote by $T(F)$ the set of all involutions $t$ such that $\sigma (t) \in F$.

\begin{lemma}\label{lem:susan} Let $S$ be a Tarski monoid satisfying conditions (F1), (F2) and (F3).
\begin{enumerate}

\item  Let $F \subseteq E(S)$ be an ultrafilter.
Define
$$F^{\sigma} = \{ \sigma (t) \colon t \in T(F) \},$$
the {\em support skeleton} of $F$.
Then $F^{\sigma}$ is a filter base and $(F^{\sigma})^{\uparrow}$.

\item Every open set in the structure space $\mathsf{X}(S)$ is a finite union of open sets of the form $U_{\sigma (t)}$
where $t$ is an involution.

\end{enumerate}
\end{lemma}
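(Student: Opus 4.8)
The plan is to read the truncated conclusion of part~(1) as the assertion that $F^{\sigma}$ is a filter base with $(F^{\sigma})^{\uparrow} = F$, and to prove both parts by feeding idempotents of $F$ back into axiom~(F1).

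For part~(1), I would first verify the filter-base condition. Given $\sigma(s),\sigma(t) \in F^{\sigma}$ with $s,t \in T(F)$, we have $\sigma(s),\sigma(t) \in F$ by the definition of $T(F)$, so their meet $\sigma(s)\sigma(t)$ lies in $F$ and is non-zero since $F$ is proper. Applying (F1) to the idempotent $e = \sigma(s)\sigma(t) \in F$ produces a non-trivial involution $r$ with $\sigma(r) \in F$ and $\sigma(r) \le \sigma(s)\sigma(t) \le \sigma(s),\sigma(t)$; thus $\sigma(r) \in F^{\sigma}$ lies below both, which is exactly the filter-base requirement. For the identity $(F^{\sigma})^{\uparrow} = F$, one inclusion is immediate: $F^{\sigma} \subseteq F$ because each $\sigma(t)$ with $t \in T(F)$ lies in $F$, and $F$ is closed upwards. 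The reverse inclusion is again a direct application of (F1): for any $e \in F$ there is an involution $t$ with $\sigma(t) \in F$ (so $t \in T(F)$ and $\sigma(t) \in F^{\sigma}$) and $\sigma(t) \le e$, whence $e \in (F^{\sigma})^{\uparrow}$.

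For part~(2), I would exploit that $\mathsf{X}(S)$ is the Cantor space, compact with the clopen sets $U_{e}$ ($e \in E(S)$) as a basis, together with the Stone-dual fact that $U_{a} \subseteq U_{b}$ if and only if $a \le b$. Fix a basic clopen set $U_{e}$. For each ultrafilter $F \in U_{e}$ we have $e \in F$, so (F1) yields an involution $t_{F}$ with $\sigma(t_{F}) \in F$ and $\sigma(t_{F}) \le e$; consequently $F \in U_{\sigma(t_{F})} \subseteq U_{e}$. Hence $\{U_{\sigma(t_{F})} \colon F \in U_{e}\}$ is an open cover of $U_{e}$, and since $U_{e}$ is clopen in a compact space it is itself compact, so a finite subcover gives $U_{e} = U_{\sigma(t_{1})} \cup \cdots \cup U_{\sigma(t_{n})}$. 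Because every clopen (equivalently, compact-open) subset of $\mathsf{X}(S)$ has the form $U_{e}$, this establishes the finite-union statement for exactly the compact open sets; a general open set, being a union of basic clopen sets $U_{e}$, is then a union of sets $U_{\sigma(t)}$, finite precisely when the set is compact.

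The routine Boolean bookkeeping (meets are products, $U_{a} \subseteq U_{b} \iff a \le b$) is not the difficulty. The only genuine content is locating the covering sets $U_{\sigma(t)} \subseteq U_{e}$ through a point $F$, and this is precisely what (F1) delivers. The sole subtlety I would flag is the word \emph{finite}: it is literally correct only for the compact open (clopen) sets, so the statement should be read as applying to the basic sets $U_{e}$, with the general open case following by forming unions.
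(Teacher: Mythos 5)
Your proof is correct and follows essentially the same route as the paper's: for part (1) you apply (F1) to the non-zero product $\sigma(s)\sigma(t)\in F$ to get the filter-base property and again to each $e\in F$ to get $(F^{\sigma})^{\uparrow}=F$, and for part (2) you cover $U_{e}$ by sets $U_{\sigma(t_{F})}$ obtained from (F1) and extract a finite subcover by compactness, exactly as the paper does. Your reading of the truncated conclusion of part (1) as $(F^{\sigma})^{\uparrow}=F$ and your caveat that the finiteness in part (2) pertains to the compact-open sets $U_{e}$ are both consistent with the paper's intent.
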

\begin{proof} (1)  Let $e_{1}, e_{2} \in F^{\sigma}$.
Then there exists $e \in F^{\sigma}$ such that $e \leq e_{1}, e_{2}$.
To see why, observe that the product $e_{1} e_{2}$ is non-zero since the idempotents belong to the ultrafilter $F$.
Thus by condition (F1), there exists a non-zero involution $t$ such that $\sigma (t)  \in F$ and $\sigma (t) \leq e_{1} e_{2}$.
We may therefore put $e = \sigma (t)$.
It follows that $F^{\sigma}$ is a  filter base.
In fact, $(F^{\sigma})^{\uparrow} = F$.
Since if $e \in F$ then by condition (F1) there exists a non-trivial involution $t$ 
such that $\sigma (t) \in F$ and $\sigma (t) \leq e$.
By construction $\sigma (t) \in F^{\sigma}$.
We have therefore shown that every ultrafilter in $E(S)$ is determined uniquely by its support skeleton.

(2) Let $U_{e}$ be an open set where $e \neq 0$.
Let $F \in U_{e}$.
Then by condition (F1), there is an involution $t$ such that $\sigma (t) \in F$ and $\sigma (t) \leq e$.
It follows that $F \in U_{\sigma (t)}$.
Put $\Lambda$ equal to the set of all such $t$ as the $F$ vary over $U_{e}$.
By construction $U_{e} \subseteq \bigcup_{t \in \Lambda} U_{\sigma (t)}$ and in fact equality holds.
We now use compactness to get thet $U_{e} = \bigcup_{i=1}^{m} U_{\sigma (t_{i})}$ where $t_{i}^{2} = t_{i}$.
We are also able to deduce that $e = \bigvee_{i=1}^{m} \sigma (t_{i})$.
\end{proof}

\begin{proposition}\label{them:matui_main} Let $S_{1}$ and $S_{2}$ be Tarski monoids satisfying conditions (F1), (F2) and (F3).
Let $\alpha \colon U(S_{1}) \rightarrow U(S_{2})$ be an isomorphism.
Then there exists a homeomorphism $\beta \colon \mathsf{X}(S_{1}) \rightarrow \mathsf{X}(S_{2})$ of structure spaces such that the following two conditions hold.
\begin{enumerate}

\item For every $F \in \mathsf{X}(S_{1})$ and $g \in U(S_{1})$ we have that $\beta (gFg^{-1}) = \alpha (g) \beta (F) \alpha (g)^{-1}$.

\item If $g \in U(S_{1})$ and $F \in \mathsf{X}(S_{1})$ then $\sigma (g) \notin F$ if, and only if, $\sigma (\alpha (g)) \notin \beta (F)$.

\end{enumerate}
\end{proposition}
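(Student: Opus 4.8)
The plan is to build $\beta$ directly from the support-skeleton description of ultrafilters in Lemma~\ref{lem:susan}(1). Since $\alpha$ is a group isomorphism it carries involutions to involutions, so for each $F \in \mathsf{X}(S_1)$ the set $\{\sigma(\alpha(t)) \colon t \in T(F)\}$ consists of supports of involutions in $S_2$. First I would check this set is a filter base: given $t_1, t_2 \in T(F)$, their supports lie in $F$ so $\sigma(t_1)\sigma(t_2) \neq 0$, and (F1) produces an involution $r$ with $\sigma(r) \in F$ and $\sigma(r) \leq \sigma(t_1)\sigma(t_2)$; Lemma~\ref{lem:splott}(1) then gives $\sigma(\alpha(r)) \leq \sigma(\alpha(t_1)),\sigma(\alpha(t_2))$, with $r \in T(F)$. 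Each such support is nonzero by Lemma~\ref{lem:midge}(1) (as $t \neq 1$), so the upward closure is a proper filter; I define $\beta(F)$ to be this filter.

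The crux is to show $\beta(F)$ is actually an ultrafilter, and simultaneously to establish the key equivalence
$$\sigma(u) \in F \iff \sigma(\alpha(u)) \in \beta(F) \qquad (\ast)$$
for every involution $u$ of $S_1$. The forward direction is immediate from the definition. For the rest, extend the proper filter $\beta(F)$ to some ultrafilter $F'$ of $E(S_2)$; by Lemma~\ref{lem:susan}(1) it suffices to recover every involution support of $F'$. If $\sigma(s) \in F'$ with $s = \alpha(u)$ an involution but $\sigma(u) \notin F$, then $\overline{\sigma(u)} \in F$, and (F1) yields an involution $t$ with $\sigma(t) \in F$ and $\sigma(t) \perp \sigma(u)$; Lemma~\ref{lem:splott}(2) forces $\sigma(\alpha(t)) \perp \sigma(s)$, yet both lie in the filter $F'$, a contradiction. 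Hence $\sigma(u) \in F$, so $\sigma(s) \in \beta(F)$; thus $F' = \beta(F)$ is an ultrafilter and $(\ast)$ holds in full. This paragraph is the main obstacle: it is the one place where both directions of Lemma~\ref{lem:splott} and the uniqueness of an ultrafilter given its support skeleton must be combined, and everything downstream is comparatively formal once $(\ast)$ is available.

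With $(\ast)$ in hand, $\beta$ is a bijection: the symmetric construction for $\alpha^{-1}$ gives a map $\gamma \colon \mathsf{X}(S_2) \to \mathsf{X}(S_1)$, and comparing involution supports through $(\ast)$ and its analogue shows that $\gamma \circ \beta$ and $\beta \circ \gamma$ fix every ultrafilter, again by Lemma~\ref{lem:susan}(1). For continuity, note that by Lemma~\ref{lem:susan}(2) the sets $U_{\sigma(s)}$ with $s$ an involution form a basis of $\mathsf{X}(S_2)$, and $(\ast)$ gives $\beta^{-1}(U_{\sigma(\alpha(u))}) = U_{\sigma(u)}$; since the structure spaces are compact Hausdorff Stone spaces, a continuous bijection is automatically a homeomorphism.

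Condition (1) I would verify by testing both ultrafilters against involution supports, using Lemma~\ref{lem:susan}(1) to reduce to a pointwise check. For an involution $s = \alpha(u)$ of $S_2$, Lemma~\ref{lem:jerry}(3) together with $\alpha(g)^{-1}\alpha(u)\alpha(g) = \alpha(g^{-1}ug)$ turns membership of $\sigma(s)$ in $\beta(gFg^{-1})$ and in $\alpha(g)\beta(F)\alpha(g)^{-1}$ into the single condition $\sigma(g^{-1}ug) \in F$ via $(\ast)$, so the two ultrafilters agree. Condition (2) then follows formally: by condition (1) and bijectivity of $\beta$ we have $\alpha(g)\beta(F)\alpha(g)^{-1} \neq \beta(F)$ exactly when $gFg^{-1} \neq F$, so $\beta$ maps $\{F \colon gFg^{-1} \neq F\}$ onto $\{F' \colon \alpha(g)F'\alpha(g)^{-1} \neq F'\}$; being a homeomorphism, $\beta$ carries closure to closure, and Proposition~\ref{prop:labor} identifies these closures as $U_{\sigma(g)}$ and $U_{\sigma(\alpha(g))}$, giving $\beta(U_{\sigma(g)}) = U_{\sigma(\alpha(g))}$, which is precisely condition~(2).
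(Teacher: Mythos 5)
Your proposal is correct, and its core is the same as the paper's: build $\beta(F)$ from the image under $\alpha$ of the support skeleton $F^{\sigma}$, using (F1) together with Lemma~\ref{lem:splott} to see that $\{\sigma(\alpha(t)) \colon t \in T(F)\}$ is a filter base sitting inside a unique ultrafilter. Where you diverge is in the organization of the key step and in the endgame, and in both places your version is tighter. The paper proves uniqueness of the ultrafilter containing $Y$ by a two-case contradiction with a pair of separating idempotents; you instead show that any ultrafilter extension $F'$ of $Y^{\uparrow}$ has its entire support skeleton already inside $Y$, so that $F' = (F'^{\sigma})^{\uparrow} = Y^{\uparrow}$. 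This simultaneously identifies $\beta(F)$ explicitly and delivers the equivalence $(\ast)$ for involutions, which you then reuse everywhere. The paper never explicitly checks that $\beta$ is a homeomorphism; your argument via $\beta^{-1}(U_{\sigma(\alpha(u))}) = U_{\sigma(u)}$, Lemma~\ref{lem:susan}(2) and compactness fills that in cleanly. Most significantly, the paper dismisses condition (2) as ``immediate from the construction,'' which is only transparent for involutions; for a general unit $g$ your derivation via condition (1), injectivity of $\beta$, and Proposition~\ref{prop:labor} (closures of the displacement sets) is a genuine argument that the paper omits, and it is the right one. The only caveat is that both Lemma~\ref{lem:splott} (through Proposition~\ref{prop:matui_five}) and Proposition~\ref{prop:labor} need the monoids to be fundamental; this hypothesis is missing from the statement but is implicit throughout the section and holds in the intended application, so your reliance on it is no worse than the paper's.
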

\begin{proof} We apply the lessons of Lemma~\ref{lem:susan} throughout.
We begin by constructing the homeomorphism $\beta$.
Let $F \in \mathsf{X}(S_{1})$.
Put 
$$Y = \{ \sigma (\alpha (t)) \colon t \in T(F) \}.$$
By Lemma~\ref{lem:splott} and the fact that $F^{\sigma}$ is a filter base,
it follows that $Y$ is a filter base.
It follows that $Y^{\uparrow}$ is a filter in $E(S_{2})$
and so $Y$ must be contained in at least one ultrafilter of $E(S_{2})$.
Let $Y \subseteq G,H$ where $G,H \subseteq E(S_{2})$ are distinct ultrafilters.
It follows that we may find non-zero idempotents $e \perp f$ such that
$G \in U_{e}$ and $H \in U_{f}$.
By condition (F1), we may find a non-trivial involution $a \in U(S_{2})$
such that $\sigma (a) \leq e$ and $\sigma (a) \in G$.
Since $\sigma (a)f = 0$, it follows that $\sigma (a) \notin H$.
We now consider the idempotent $\sigma (\alpha^{-1}(a))$.
Suppose first that $\sigma (\alpha^{-1}(a)) \in F$.
Then $\sigma (a) \in Y$, $Y \subseteq H$ and we showed above that $\sigma (a) \notin H$.
This is a contradiction.
It follows that $\sigma (\alpha^{-1}(a)) \notin F$.
It follows by (F1), that there is a non-trivial involution $b$ such that $\sigma (b) \in F$
and $\sigma (b) \sigma (\alpha^{-1}(a)) = 0$.
But by Lemma~\ref{lem:splott}, we have that
$\sigma (b) \perp \sigma (\alpha^{-1}(a))$ implies that $\sigma (\alpha (b)) \perp \sigma (a)$.
Now $\sigma (\alpha (b)) \in Y$ and $\sigma (a) \in G$.
This also gives us a contradiction.
It follows that the set $Y$ is contained in a {\em unique} ultrafilter of $E(S_{2})$.
We denote this unique ultrafilter by $\beta (F)$.
It is immediate from the above construction, 
that for each involution $t$, we have that $\beta (U_{\sigma (t)}) \subseteq U_{\sigma (\alpha (t))}$.
It is also evident from the above construction by symmetry that $\beta$ is bijective.

We now verify conditions (1) and (2).
It is immediate from our construction that (2) holds.
Let $g \in U(S_{1})$ and $F \in \mathsf{X}(S_{1})$.
Let $\sigma (t) \in F$ be an element of $F^{\sigma}$.
Then $\sigma (gtg^{-1}) \in gFg^{-1}$ and we observe that $gtg^{-1}$ is also an involution.
Thus $\sigma (\alpha (gtg^{-1})) \in \beta (gFg^{-1})$.
Therefore $\alpha (g) \sigma (\alpha (t)) \alpha (g)^{-1} \in \beta (gFg^{-1})$.
It is now straightforward to see that $\alpha (g) \beta (F) \alpha (g)^{-1} \subseteq \beta (gFg^{-1})$.
But equality now follows since both are ultrafilters.
\end{proof}

\begin{proposition}\label{prop:aber} Let $S$ and $T$ be two piecewise factorizable Boolean inverse $\wedge$-monoids.
Let $\alpha \colon U(S) \rightarrow U(T)$ be an isomorphism of groups
and let $\beta \colon \mathsf{X}(S) \rightarrow \mathsf{X}(T)$ be a homeomorphism of structure spaces.
We suppose in addition that the following two conditions hold.
\begin{enumerate}

\item For every $F \in \mathsf{X}(S)$ and $g \in U(S)$ we have that $\beta (gFg^{-1}) = \alpha (g) \beta (F) \alpha (g)^{-1}$.

\item If $g \in U(S)$ and $F \in X(S)$ then $\sigma (g) \notin F$ if, and only if, $\sigma (\alpha (g)) \notin \beta (F)$.

\end{enumerate}
Then $S$ and $T$ are isomorphic.
\end{proposition}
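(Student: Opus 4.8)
The plan is to build an isomorphism of topological groupoids $\Phi\colon \mathsf{G}(S)\to\mathsf{G}(T)$ and then invoke non-commutative Stone duality (Theorem~\ref{them:duality}) to conclude $S\cong T$. Since $S$ and $T$ are piecewise factorizable, Lemma~\ref{lem:welsh} tells us that every ultrafilter contains a unit, and this is exactly the hook needed to transport the data $\alpha$ and $\beta$ to the groupoid level. Write $\beta_{\ast}\colon E(S)\to E(T)$ for the Boolean algebra isomorphism induced by the homeomorphism $\beta$, characterised by $\beta(U_{e})=U_{\beta_{\ast}(e)}$.

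First I would define $\Phi$. Given $A\in\mathsf{G}(S)$, set $F=E(\mathbf{d}(A))$ and choose a unit $g\in A\cap U(S)$, so that $A=(gF^{\uparrow})^{\uparrow}$. Define $\Phi(A)=(\alpha(g)\beta(F)^{\uparrow})^{\uparrow}$, the unique ultrafilter of $\mathsf{G}(T)$ with domain $\beta(F)^{\uparrow}$ containing the unit $\alpha(g)$. The crucial point is \emph{well-definedness}: if $g,h\in A\cap U(S)$ then $g^{-1}h\in F^{\uparrow}$, so $\phi(g^{-1}h)\in F$, i.e. $\sigma(g^{-1}h)\notin F$; condition~(2) applied to the unit $g^{-1}h$ gives $\sigma(\alpha(g)^{-1}\alpha(h))\notin\beta(F)$, whence $\alpha(g)^{-1}\alpha(h)\in\beta(F)^{\uparrow}$ and so $\alpha(h)\in\alpha(g)\beta(F)^{\uparrow}$. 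The two candidate ultrafilters thus share the domain $\beta(F)^{\uparrow}$ and the common unit $\alpha(h)$, so they coincide by \cite[Lemma~2.11]{Lawson10b}, quoted in the proof of Proposition~\ref{prop: covering_functor}.

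Next I would check that $\Phi$ is a functor. Since idempotent ultrafilters contain $1$, taking $g=1$ shows $\Phi$ restricts to $\beta$ on the unit space and preserves $\mathbf{d}$. For $\mathbf{r}$, note that $\mathbf{r}(A)$ corresponds to the ultrafilter $gFg^{-1}\subseteq E(S)$, and condition~(1) gives $\beta(gFg^{-1})=\alpha(g)\beta(F)\alpha(g)^{-1}$, which is exactly the idempotent part of $\mathbf{r}(\Phi(A))$; the same identity shows composability is preserved. If $A\cdot B$ is defined, with units $g\in A$ and $h\in B$, then $gh$ is a unit in $A\cdot B$ and $\alpha(gh)=\alpha(g)\alpha(h)$ lies in both $\Phi(A\cdot B)$ and $\Phi(A)\cdot\Phi(B)$, which have the same domain; the cited uniqueness lemma again forces $\Phi(A\cdot B)=\Phi(A)\cdot\Phi(B)$. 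Running the identical construction with $\alpha^{-1}$ and $\beta^{-1}$, which inherit conditions~(1) and~(2) by substitution, produces a functor inverse to $\Phi$, so $\Phi$ is a bijective functor.

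Finally I would show $\Phi$ is a homeomorphism, and this is where I expect the real work to sit. Because $S$ is piecewise factorizable, Lemma~\ref{lem:welsh} lets me write each $V_{s}$ as a finite union of sets $V_{ge}$ with $g$ a unit and $e=\mathbf{d}(ge)$ an idempotent, so it suffices to track $\Phi$ on such basic pieces. A direct check shows $A\in V_{ge}$ if and only if $g\in A$ and $e\in E(\mathbf{d}(A))$, whence $\Phi(V_{ge})=V_{\alpha(g)\beta_{\ast}(e)}$, using that $e\in F \Leftrightarrow \beta_{\ast}(e)\in\beta(F)$. Thus $\Phi$ carries basic opens to basic opens, and by symmetry so does $\Phi^{-1}$, so $\Phi$ is an isomorphism of Boolean groupoids. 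Non-commutative Stone duality then yields the isomorphism $\Theta\colon S\to T$, described on elements below units by $\Theta(ge)=\alpha(g)\beta_{\ast}(e)$ and extended over finite compatible joins. The hard part will be organising this topological step cleanly: one must verify that the finite-join decompositions behave correctly under $\Phi$ (via Lemma~\ref{lem:pele}) and that the images really have the claimed compact-open bisection form, rather than being merely open.
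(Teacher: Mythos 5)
Your proposal follows essentially the same route as the paper's own proof: define the functor $A\mapsto(\alpha(g)\beta(E(\mathbf{d}(A)))^{\uparrow})^{\uparrow}$ using a unit $g\in A$ supplied by piecewise factorizability, check well-definedness via condition (2), functoriality via condition (1), and handle the topology by tracking basic opens of the form $V_{ge}$. Your variations (deducing bijectivity from the inverse construction rather than a direct injectivity argument, and spelling out the $V_{ge}$ decomposition) are minor, and in fact your treatment of the homeomorphism step is somewhat more explicit than the paper's own sketch.
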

\begin{proof} We shall actually construct a bijective functor $\theta \colon \mathsf{G}(S) \rightarrow \mathsf{G}(T)$ which is also a homeomorphism.
The result will then follow.

Let $A \in \mathsf{G}(S)$.
Then $A = (a \mathbf{d}(A))^{\uparrow}$ where $a \in A$.
Since $S$ is piecewise factorizable, there is a unit $g \in A$ and so we may write
$A = (g \mathbf{d}(A))^{\uparrow}$.
Now $E(\mathbf{d}(A)) \subseteq E(S)$ is an ultrafilter.
We may therefore define
$$\theta (A) = (\alpha (g) \beta (E(\mathbf{d}(A)))^{\uparrow})^{\uparrow},$$
it only remaining to show that this is well-defined.
Let $g,h \in A$ both units.
Then $g^{-1}h \in \mathbf{d}(A)$.
This implies that $\phi (g^{-1}h) \in E(\mathbf{d}(A))$.
It follows that $\sigma (g^{-1}h) \notin E(\mathbf{d}(A))$.
We now use condition (2), to deduce that $\sigma (\alpha (g^{-1}h)) \notin \beta (E(\mathbf{d}(A)))$.
It follows that $\alpha (g^{-1}h) \in \beta (E(\mathbf{d}(A)))^{\uparrow}$.
Thus $\theta$ is well-defined.
If $A$ is an idempotent ultrafilter, it contains the identity.
It follows that $\theta$ maps identities to identities.
It is immediate that $\theta$ is surjective.
We prove that $\theta$ is injective.
Suppose that $\theta (A) = \theta (B)$.
Let $A = (gF^{\uparrow})^{\uparrow}$ and $B = (hG^{\uparrow})^{\uparrow}$ where $F,G \subseteq E(S)$ are ultrafilters and $g,h \in U(S)$.
By definition $\theta (A) = (\alpha (g) \beta (F)^{\uparrow})^{\uparrow}$ and $\theta (B) = (\alpha (g) \beta (G)^{\uparrow})^{\uparrow}$.
Since $\mathbf{d}(\theta (A)) = \mathbf{d}(\theta (B))$,
we have that $\beta (F)^{\uparrow} = \beta (G)^{\uparrow}$.
Therefore $E(\beta (F)^{\uparrow}) = E( \beta (G)^{\uparrow})$.
Hence $\beta (F) = \beta (G)$ and so $F = G$.
We also have that $\alpha (g^{-1}h) \in \beta (E(\mathbf{d}(A))^{\uparrow})^{\uparrow}$.
We now use condition (2) to deduce that $A = B$.
We have therefore defined a bijection that preserves identities.

We now prove that $\theta$ is a functor.
Suppose that $A \cdot B$ is defined.
By condition (1), we check that $\mathbf{d}(A) = \mathbf{r}(B)$ implies that $\mathbf{d}(\theta (A)) = \mathbf{r}(\theta (B))$.
It follows that $\theta (A) \cdot \theta (B)$ is defined.
We now use condition (1) again to show that $\theta (A \cdot B) =\theta (A) \cdot \theta (B)$.

It remains to show that $\theta$ is a homeomorphism.
Let $t \leq h$, a unit, where $t \in U(T)$.
Put $g = \alpha^{-1} (h)$
and
$\beta^{-1}(U_{t^{-1}t}) = U_{e}$.
Define $s = ge$.
We shall show that $\theta^{-1}(V_{t}) = V_{s}$.

Now let $s \leq g$, a unit, where $s \in S$.
Put $h = \alpha (g)$
and
$\theta (U_{s^{-1}s}) = U_{f}$.
Define $t = hf$.
We shall show that $\theta (V_{s}) = V_{t}$.
\end{proof}

\begin{center}
{\bf Proof of Theorem~\ref{them:one}}
\end{center}

Let $S$ and $T$ be two countably infinite Boolean inverse $\wedge$-monoids which are $0$-simplifying and fundamental.
Let $\alpha \colon U(S) \rightarrow U(T)$ be an isomorphism of unit groups.
By Corollary \ref{cor:judy}, both $S$ and $T$ are Tarski monoids.
By Proposition \ref{prop:dory}, they are both piecewise factorizable.
By Proposition \ref{prop:hope}, the axioms (F1), (F2) and (F3) all hold.
The isomorphism of $S$ and $T$ now follows by Proposition \ref{them:matui_main} and Proposition \ref{prop:aber}.


\end{document}